\documentclass[]{amsart}
\usepackage[all]{xy}
\usepackage{enumerate}
\usepackage{amssymb}

% ------------------------------------------------------------------------
% Over-full v-boxes on even pages are due to the \v{c} in author's name
\vfuzz12pt % Don't report over-full v-boxes if over-edge is small
\hfuzz12pt % Don't report over-full h-boxes if over-edge is small
% THEOREMS ---------------------------------------------------------------
\newtheorem{thm}{Theorem}[section]
             % numbered in introduction
             % not numbered
              %
\newtheorem{prop}[thm]{Proposition}
\newtheorem{lem}[thm]{Lemma}
\newtheorem{cor}[thm]{Corollary}

\theoremstyle{definition}
%\theorembodyfont{\rm} %%% This is to typeset examples
                      % in roman; since some of the examples are very long,
                      % typesetting them in \it makes them hard to read
\newtheorem{defn}[thm]{Definition}
\newtheorem{examp}[thm]{Example}
         % not numbered

\theoremstyle{remark}
\newtheorem{rmk}[thm]{Remark}

\newcommand{\rmap}{\longrightarrow}
\newcommand{\lmap}{\longleftarrow}

\newcommand{\pr}         {{\mathrm{pr}}}

\newcommand{\tto}{\rightrightarrows}    % Arrows of a groupoid
\renewcommand{\gg}{\mathfrak g}
\newcommand{\can}{\mathrm{can}}

%Math Operators

\DeclareMathOperator{\Ker}        {ker}

\DeclareMathOperator{\Aut}        {Aut}
\DeclareMathOperator{\OutAut}     {OutAut}
\DeclareMathOperator{\OutGaug}     {OutGaug}
\DeclareMathOperator{\InnAut}     {InnAut}
\DeclareMathOperator{\Pic}        {Pic}

\DeclareMathOperator{\Ad}         {Ad}
\DeclareMathOperator{\GL}         {GL}

\DeclareMathOperator{\Diff}       {Diff}

\DeclareMathOperator{\Bis}        {Bis}
\DeclareMathOperator{\LBis}       {LBis}
\DeclareMathOperator{\IBis}       {IsoBis}
\DeclareMathOperator{\IsoL}       {IsoLBis}
\newcommand{\Rr}{\mathbb R}
\newcommand{\Ss}{\mathbb S}
\newcommand{\Zz}{\mathbb Z}

\newcommand{\Tt}{\mathbb T}
\renewcommand{\d}{\mathrm d}

\newcommand{\eps}{\varepsilon}

\newcommand{\X}{\ensuremath{\mathfrak{X}}}

\newcommand{\Ham}{\text{Ham}}
\newcommand{\bas}{\mathrm{bas}}
\newcommand{\cl}{\mathrm{cl}}

\newcommand{\G}{\mathcal{G}}            % Lie groupoid
\newcommand{\s}{\mathbf{s}}             % source map
\renewcommand{\t}{\mathbf{t}}           % target map

%%% DIFFERENTIAL GEOMETRY %%%%%%%%%%%%%%%%%%%%%%%%%%%%%%%%%%%

\renewcommand{\d}{\mathrm d}               % differential
\newcommand{\Lie}{\boldsymbol{\pounds}}    % Lie derivative
               % Trivial line bundle

         % Holonomy
       % rank of a vector bundle
     % codimension
           % codimension
           % trace
         % Exponential

%%% GREEK ABREVIATIONS %%%%%%%%%%%%%%%%%%%%%%%%%%%%%%%%%%%%%%%%%%%
\newcommand{\al}{\alpha}
\newcommand{\be}{\beta}

%inner products, bi-invariant form

%lie algebras, duals, brackets, cobrackets

\newcommand{\frakg}     {\mathfrak{g}}

\newcommand{\pic}       {\mathfrak{pic}}

%3-forms, trivectors

%Poisson and Dirac structures

%new maps sigma, rho...

% groupoids, source, target

% Courant bracket

%%%%%%%% comments and questions

%
%
\numberwithin{equation}{section}

%
% text starts here
%

%
% title page
%

\begin{document}

\title{\bf Picard groups of Poisson manifolds}

\author{Henrique Bursztyn}
\address{IMPA, Estrada Dona Castorina 110, Jardim Botanico, Rio de Janeiro, 22460-320, Brazil}
\email{henrique@impa.br}

\author{Rui Loja Fernandes}
\address{Department of Mathematics,
University of Illinois at Urbana-Champaign,
1409 W. Green Street,
Urbana, IL 61801
USA }
\email{ruiloja@illinois.edu}

\thanks{HB has had the support of Faperj, and RLF is partially supported
by NSF grants DMS 1308472 and DMS 1405671. Both authors acknowledge the support
of a Capes/Brazil-FCT/Portugal cooperation grant and the
\emph{Ci\^encias Sem Fronteiras} program sponsored by CNPq.}

\date{\today}

%%%%%%

\begin{abstract}
For a Poisson manifold $M$ we develop systematic methods to compute
its Picard group $\Pic(M)$, i.e., its group of self Morita
equivalences. We establish a precise relationship between $\Pic(M)$
and the group of gauge transformations up to Poisson diffeomorphisms
showing, in particular, that their connected components of the
identity coincide; this allows us to introduce the Picard Lie
algebra of $M$ and to study its basic properties. Our methods lead
to the proof of a conjecture from \cite{BW04} stating that
$\Pic(\gg^*)$ for any compact simple Lie algebra agrees with the
group of outer automorphisms of $\gg$.
\end{abstract}

\maketitle

%MSC2000 Subject Classification Numbers: 53D17, 53D20, 58H05
%Keywords: Dirac structures, Manin pairs, Moment maps, quasi-Poisson spaces

\tableofcontents

%%%%%%%%%%%%%%%%%%%%%%%%%%%%%%%%%%%%%%%%%%%%%%%%%%%%%%%%%%%%%%%%%%%%%%%%%%%%
%%%%%%%%%%%%%%%%%%%%%%%%%%%%%%%%%%%%%%%%%%%%%%%%%%%%%%%%%%%%%%%%%%%%%%%%%%%%
\section{Introduction}
%%%%%%%%%%%%%%%%%%%%%%%%%%%%%%%%%%%%%%%%%%%%%%%%%%%%%%%%%%%%%%%%%%%%%%%%%%%%
%%%%%%%%%%%%%%%%%%%%%%%%%%%%%%%%%%%%%%%%%%%%%%%%%%%%%%%%%%%%%%%%%%%%%%%%%%%%

The Picard group of an (integrable) Poisson manifold was introduced
by A. Weinstein and the first author in \cite{BW04}, as an analogue
of the notion of Picard group in ring theory. Picard groups may be
seen as groups of automorphisms once the usual notion of isomorphism
is replaced by {\em Morita equivalence}, a weaker relation that
identifies objects with equivalent categories of modules. In other
words, the Picard group of a Poisson manifold is defined, just as in
ring theory, as the group of {\em self Morita equivalences}.

The notion of Morita equivalence in Poisson geometry goes back  to
the work of P. Xu \cite{Xu91}. In this context, the role of
``module'' is played by {\em symplectic realizations}, while Morita
equivalence is defined in terms of {\em dual pairs} of Poisson
manifolds, in the sense of \cite{We83}. As expected, Morita
equivalent Poisson manifolds share many key properties: for example,
they have homeomorphic leaf spaces, and the transverse geometry of
corresponding symplectic leaves is the same.
From a geometric point of view, one may think of Morita equivalence
as identifying Poisson manifolds modulo the ``internal'' symplectic
geometry of the leaves, so Picard groups do not encode this
symplectic information -- though they codify the topology of the leaves (e.g., their fundamental groups)
and the transversal variation of leafwise symplectic forms. For example, the Picard group of a
symplectic manifold $M$ coincides with the group of outer
automorphisms of $\pi_1(M)$ (see \cite{BW04}).

Although the Picard group $\Pic(M)$ of a Poisson manifold $(M,\pi)$
is a natural invariant, computations are usually very hard. In fact,
up to now, Picard groups have been described only in a handful of
examples, treated in a case-by-case basis. Our aim in this paper is
to relate the Picard group with more computable groups associated
with a Poisson manifold; this allows us to develop more systematic
methods to calculate $\Pic(M)$ while gaining further geometric
insight. In particular, we identify a large (open) subgroup of the
Picard group which can be described geometrically and quite
explicitly. In favorable circumstances, this group actually
coincides with the Picard group and, hence, can be used as a
computational tool. As a byproduct, we obtain a way to identify the
infinitesimal counterpart of the Picard group, the Picard Lie
algebra $\pic(M)$, which was not previously known.

Let us describe in more detail the main results of this paper. After
recalling basic definitions and introducing the group $\Pic(M)$, we
discuss examples of Poisson manifolds illustrating that $\Pic(M)$
can range from being a finite group to being very ``large''.
In order to gain insight into this issue, we consider the group of
\emph{gauge transformations up to Poisson diffeomorphism},
\[
\G_{\pi}(M):=\{(\phi,B)\in
\Diff(M)\ltimes\Omega_{\cl}^2(M)~|~\phi_*\pi_B=\pi\},
\]
where $\pi_B:=e^B\pi$ denotes the $B$-transform of $\pi$, as in
\cite{SeWe} (so $\pi_B$ has the same foliation as $\pi$ but the
leafwise symplectic forms differ by the pullback of $B$); the group
operation in $\G_{\pi}(M)$ is that of semi-direct product. % \[(\phi_1,B_1)\cdot (\phi_2,B_2):=(\phi_1\circ\phi_2,B_1+\phi_1^*B_2).\]

Our first main result can be stated as follows:

\begin{thm}
\label{thm:main:introd} There is an exact sequence of groups,
\[
\xymatrix{ 1\ar[r] &\IsoL(\Sigma(M))\ar[r]&
\Bis(\Sigma(M))\ar[r]&\G_\pi(M)\ar[r]&\Pic(M),}
\]
such that the image of the last arrow is the normal subgroup of $\Pic(M)$
formed by the isomorphism classes of self Morita bimodules which admit a bisection.
\end{thm}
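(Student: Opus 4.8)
The plan is to prove the two containments characterizing $\Img\Psi$, where $\Psi\colon\G_\pi(M)\to\Pic(M)$ denotes the last arrow, and then to establish normality; since the sequence is one of groups, $\Psi$ is a homomorphism and $\Img\Psi$ is automatically a subgroup. I assume from the construction of the sequence that $\Psi(\phi,B)$ is the class of the self Morita bimodule obtained from the symplectic groupoid $\Sigma(M)$, equipped with its canonical bimodule structure $M\xleftarrow{\s}\Sigma(M)\xrightarrow{\t}M$, by replacing its symplectic form with the gauge transform determined by $B$ and precomposing one moment map with $\phi$. Because $\Sigma(M)$ carries the unit bisection $e\colon M\to\Sigma(M)$, the image of $e$ under this twisting is a bisection of $\Psi(\phi,B)$; hence every element of $\Img\Psi$ admits a bisection.

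For the reverse containment I would start from an arbitrary self Morita bimodule $S$, with dual-pair moment maps $J_L,J_R\colon S\to M$, admitting a bisection $\beta\subset S$, and set $\phi:=J_R\circ(J_L|_\beta)^{-1}\in\Diff(M)$. Translating $\beta$ by the principal $\Sigma(M)$-action sweeps out all of $S$ and thus defines a $\Sigma(M)$-equivariant diffeomorphism $\Sigma(M)\xrightarrow{\sim}S$ compatible with one moment map and carrying the other to $\t$ composed with $\phi$. Transporting the symplectic form of $S$ along this diffeomorphism and comparing it with the canonical multiplicative form of $\Sigma(M)$ yields a $2$-form; since both forms are closed their difference is closed, and multiplicativity forces it to equal $\s^*B$ for a single closed $B\in\Omega_{\cl}^2(M)$. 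The anti-Poisson/Poisson dual-pair property of $J_L,J_R$ then translates exactly into the gauge relation $\phi_*\pi_B=\pi$, so $(\phi,B)\in\G_\pi(M)$ and $[S]=\Psi(\phi,B)$. I expect this to be the \emph{main obstacle}: identifying the transported form as $\s^*B$ for a well-defined closed basic form and verifying that the dual-pair condition sharpens to $\phi_*\pi_B=\pi$.

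Finally, for normality I would show that the property of admitting a bisection is invariant under conjugation in $\Pic(M)$; combined with the fact that $\Img\Psi$ is a subgroup, this gives that it is normal. Given $[S]$ with bisection $\beta$ and an arbitrary $[P]\in\Pic(M)$ with inverse $[P]^{-1}=[\bar P]$, I would exhibit a bisection of the composite bimodule $P\otimes_{\Sigma(M)}S\otimes_{\Sigma(M)}\bar P$ representing $[P][S][P]^{-1}$. The two inputs are that bisections compose under the tensor product of bimodules—the image in the quotient of a matched fiber product of bisections is again a bisection—and that the canonical self-equivalence $P\otimes_{\Sigma(M)}\bar P\cong\Sigma(M)$ carries the unit bisection. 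Inserting $\beta$ into this cancellation produces the required bisection, so the conjugate again lies in $\Img\Psi$. The subtlety here is that $P$ need not itself admit a bisection, so the argument must route through the cancellation $P\otimes_{\Sigma(M)}\bar P\cong\Sigma(M)$ rather than through any bisection of $P$.
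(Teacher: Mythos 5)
Your proposal establishes the image characterization of the last arrow and its normality, but it never proves exactness of the sequence, which is part of the statement. Concretely, you do not show that the kernel of $\Psi\colon\G_\pi(M)\to\Pic(M)$ equals the image of the map \eqref{eq:bis:gauge}, $b\mapsto(\t\circ b,b^*\Omega)$ (nor the easier, essentially definitional fact that the kernel of \eqref{eq:bis:gauge} is $\IsoL(\Sigma(M))$). This kernel identification is a substantive half of the theorem: in the paper it is Proposition~\ref{prop:trivial:module}, asserting that $\Sigma(M)_{(\phi,B)}$ is trivial in $\Pic(M)$ if and only if $\phi=\t\circ b$ and $B=b^*\Omega$ for some $b\in\Bis(\Sigma(M))$. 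One direction is proved by showing that right translation $x\mapsto x\cdot b(\s(x))^{-1}$ is a symplectomorphism $(\Sigma(M),\Omega-\s^*B)\to(\Sigma(M),\Omega)$, using the multiplicativity \eqref{eq:multiplicative} of $\Omega$ and anti-symplecticity of inversion; the other direction transports the identity bisection through a trivializing isomorphism and computes its image under \eqref{eq:bis:gauge}. Nothing in your proposal plays this role: knowing which classes are hit by $\G_\pi(M)$ does not tell you when two pairs $(\phi,B)$ hit the same class. (The inclusion of the image of $\Bis(\Sigma(M))$ into $\ker\Psi$ can be squeezed out of your reverse containment applied to the trivial bimodule with an arbitrary bisection, but the reverse inclusion is genuinely missing.)

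The parts you do prove are essentially correct, and your reverse containment takes a different route from the paper's. The paper first gauge-transforms the form on $S$ to $\widetilde{\omega}=\omega-q^*b^*\omega$ so that the bisection becomes Lagrangian, checks in Lemma~\ref{lem:symp} that $p\colon(S,\widetilde{\omega})\to M$ is still a complete symplectic realization, and then invokes the Coste--Dazord--Weinstein theorem (Theorem~\ref{thm:canonical:realz}); you instead build the equivariant diffeomorphism $\Phi\colon\Sigma(M)\to S$ directly from the principal action along the bisection and compare forms afterwards. This can be made rigorous, but the phrase ``multiplicativity forces it'' needs care: $\Phi^*\omega$ is not a multiplicative form ($S$ is not a groupoid). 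What one actually uses is the Mikami--Weinstein compatibility $\mu^*\omega=\pr_1^*\Omega+\pr_2^*\omega$ of the symplectic action together with equivariance of $\Phi$, which yields $m^*\Delta=\pr_2^*\Delta$ for $\Delta:=\Phi^*\omega-\Omega$; this kills all $\s$-vertical vectors in $\Delta$, and closedness then makes $\Delta$ basic, hence of the form $\s^*B$ with $B$ closed. Finally, your normality argument is the paper's Proposition~\ref{prop:group:bisections} in disguise: ``inserting $\beta$ into the cancellation'' must be implemented by an explicit formula such as \eqref{eq:bisection}, and the well-definedness (independence of the auxiliary element $u$) and smoothness of that formula are precisely the checks the paper carries out and your sketch still owes.
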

In this exact sequence, $\Bis(\Sigma(M))$ denotes the group of
bisections of $\Sigma(M)$, the source-simply-connected symplectic
groupoid integrating $M$, while $\IsoL(\Sigma(M))$ is its subgroup
formed by Lagrangian bisections which take values in the isotropy, see Section~\ref{sec:picard}.

We also consider a natural topology on $\Pic(M)$ and verify that the
image of the last map in the exact sequence above is open and a
topological group. If $\Sigma(M)$ is compact, we show that $\Pic(M)$
is itself a topological group (even a Lie group, although usually
infinite dimensional), and we believe that this should also hold in
the non-compact case. Note that the other groups in the exact
sequence of Theorem \ref{thm:main:introd} are spaces of maps and
hence carry natural $C^\infty$-topologies.

Since the image of the last arrow is open in $\Pic(M)$, it defines a
``large" subgroup of $\Pic(M)$, which contains the connected
component of the identity. This allows us to realize the Lie algebra
of the Picard group, denoted $\pic(M)$, as a quotient of
$\gg_\pi(M)$, the Lie algebra of $\G_\pi(M)$. As a result, we obtain
the following infinitesimal counterpart of Theorem
\ref{thm:main:introd}:

\begin{thm}
\label{thm:main:algebra:introd} The Picard Lie algebra fits into an
exact sequence of Lie algebras as follow:
\[
\xymatrix{ 0\ar[r] &\Omega^1_{\cl,\bas}(M)\ar[r]&
\Omega^1(M)\ar[r]&\frakg_\pi(M)\ar[r]&\pic(M)\ar[r]&0.}
\]
\end{thm}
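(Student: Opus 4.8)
The plan is to obtain the infinitesimal sequence of Theorem~\ref{thm:main:algebra:introd} by differentiating the exact sequence of groups in Theorem~\ref{thm:main:introd} at the identity. The strategy rests on the observation that all groups appearing in the finite sequence are either spaces of sections/maps carrying natural $C^\infty$-topologies, or (as asserted in the text immediately after the theorem) genuine Lie groups in the compact case. Since the image of the last map $\G_\pi(M)\to\Pic(M)$ is open and contains the identity component, its Lie algebra is exactly $\pic(M)$, and I expect that Lie functoriality turns the group sequence
\[
1\to\IsoL(\Sigma(M))\to\Bis(\Sigma(M))\to\G_\pi(M)\to\Pic(M)
\]
into a complex of Lie algebras. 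The main work is to (i) identify each Lie algebra, and (ii) verify exactness of the resulting infinitesimal sequence.

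First I would identify the Lie algebras termwise. The Lie algebra of $\Bis(\Sigma(M))$, the group of bisections of the source-simply-connected symplectic groupoid, is the space of sections of the Lie algebroid $A=T^*M$ integrating $\pi$; since $A=T^*M$ here, this Lie algebra is $\Omega^1(M)$, the penultimate-but-one term. The Lie algebra of $\IsoL(\Sigma(M))$, the subgroup of Lagrangian bisections valued in the isotropy, should infinitesimally be the closed basic one-forms $\Omega^1_{\cl,\bas}(M)$: the Lagrangian condition linearizes to closedness, and ``valued in the isotropy'' linearizes to the basic (leafwise-vanishing, foliation-invariant) condition. Next, the Lie algebra $\frakg_\pi(M)$ of $\G_\pi(M)=\Diff(M)\ltimes\Omega^2_{\cl}(M)$ must be computed directly from its semidirect-product structure, differentiating the constraint $\phi_*\pi_B=\pi$; this gives a subspace of $\X(M)\ltimes\Omega^2_{\cl}(M)$ cut out by the linearized gauge condition, and one checks that the injection $\Omega^1(M)\to\frakg_\pi(M)$ is the derivative of $\Bis(\Sigma(M))\to\G_\pi(M)$.

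With the terms identified, exactness at each node follows by linearizing the corresponding statement from Theorem~\ref{thm:main:introd}. Exactness at $\Omega^1(M)$ (injectivity of $\Omega^1_{\cl,\bas}(M)\hookrightarrow\Omega^1(M)$) is immediate, and exactness at $\frakg_\pi(M)$ and at $\pic(M)$ come from differentiating exactness of the group sequence, using that the image of $\G_\pi(M)\to\Pic(M)$ is open so that its differential is surjective onto $\pic(M)$ (yielding the final $0$ on the right). The one node requiring genuine argument is exactness at $\Omega^1(M)$ in the sense of matching $\image(\Omega^1_{\cl,\bas}\to\Omega^1)$ with $\Ker(\Omega^1\to\frakg_\pi)$: I would show that a one-form $\alpha$ maps to zero in $\frakg_\pi(M)$ precisely when the infinitesimal bisection it generates acts trivially by gauge transformations, which should characterize exactly the closed basic forms.

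The main obstacle I anticipate is analytic rather than algebraic: passing from the group-level exact sequence to the Lie-algebra level requires that differentiation at the identity be well behaved across these infinite-dimensional groups — in particular that kernels and images of the differentiated maps genuinely match the (closed) subspaces named above, and that the ``openness of the image'' is strong enough to guarantee that the induced map $\frakg_\pi(M)\to\pic(M)$ is onto with the expected kernel. Concretely, the delicate point is the identification of the Lie algebra of $\IsoL(\Sigma(M))$ with $\Omega^1_{\cl,\bas}(M)$ and the verification that this coincides with the kernel of $\Omega^1(M)\to\frakg_\pi(M)$; establishing this precise matching, rather than merely an inclusion, is where the real content lies.
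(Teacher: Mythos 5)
Your proposal runs the logic in the opposite direction from the paper, and this is where it has a genuine gap. You treat $\pic(M)$ as an independently existing object (the Lie algebra of $\Pic(M)$, or of its open subgroup of bimodules admitting bisections) and propose to obtain the exact sequence by applying Lie functoriality to the group sequence of Theorem~\ref{thm:main}. But that functoriality is precisely what is unavailable here: $\Pic(M)$ is not shown to be a Lie group --- in general it is not even known to be a topological group (the paper proves this only when $\Sigma(M)$ is compact) --- and for infinite-dimensional groups, differentiating an exact sequence does not automatically yield an exact sequence of Lie algebras; the paper explicitly notes that making such arguments rigorous would require machinery like the convenient setting of \cite{KM97}, which it declines to invoke. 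The paper's way around this is definitional: it first computes the infinitesimal map $\Omega^1(M)\to\frakg_\pi(M)$, $\eta\mapsto(\pi^\sharp(\eta),\d\eta)$, proves by direct computation that it is a Lie algebra homomorphism (using that $\pi^\sharp$ intertwines the Koszul bracket with the Lie bracket) and that its image $\mathfrak{I}=\{(\pi^\sharp(\eta),\d\eta)\}$ is an ideal (Lemma~\ref{lem:ideal}), and then \emph{defines} $\pic(M):=\frakg_\pi(M)/\mathfrak{I}$. With this definition, exactness at $\frakg_\pi(M)$ and at $\pic(M)$ is tautological; the openness of the image of $\G_\pi(M)\to\Pic(M)$ serves only as heuristic justification that this quotient deserves the name ``Lie algebra of the Picard group'', not as an ingredient of the proof. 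Your argument, by contrast, uses the differentiation of the group sequence as an actual proof step, for a group whose differentiable structure does not exist in the paper's framework, and it never says what $\pic(M)$ \emph{is} --- one cannot prove exactness at a term that has not been defined.

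Two further points. First, you omit the verification that the image of $\Omega^1(M)$ in $\frakg_\pi(M)$ is an ideal; without it the quotient is not a Lie algebra and the phrase ``exact sequence of Lie algebras'' does not parse. In your framework this would be deduced from normality of the kernel of $\G_\pi(M)\to\Pic(M)$, but that again invokes the missing machinery, whereas the paper proves it by a short Cartan-calculus computation. Second, you misplace the difficulty: the identification of $\Ker\bigl(\Omega^1(M)\to\frakg_\pi(M)\bigr)$ with $\Omega^1_{\cl,\bas}(M)$, which you single out as ``where the real content lies'', is the easy part --- $(\pi^\sharp(\eta),\d\eta)=0$ forces $\d\eta=0$ and $\pi^\sharp(\eta)=0$, the latter being $i_{X_f}\eta=0$ for all $f\in C^\infty(M)$, and closedness then gives $\Lie_{X_f}\eta=\d\, i_{X_f}\eta=0$, so $\eta$ is closed and basic. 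The substantive steps are exactly the ones your proposal delegates to ``Lie functoriality'': the explicit form of the infinitesimal map, its homomorphism property, and the ideal property of its image.
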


In this last exact sequence, $\Omega^1(M)$ denotes the Lie algebra
of 1-forms with the Koszul Lie bracket $[~,~]_\pi$ induced by the
Poisson tensor, and $\Omega^1_{\cl,\bas}(M)$ is its subalgebra
consisting of closed, basic (relative to the symplectic foliation)
1-forms. The Lie algebra $\gg_\pi(M)$ is also given explicitly as
\[
\gg_\pi:=\{(Z,\beta)\in\X(M)\ltimes \Omega^2_{\cl}(M)~:~\d\beta=0,\
\Lie_Z\pi=\pi^\sharp(\beta)\},
\]
viewed as a subalgebra of the semi-direct product $\X(M)\ltimes
\Omega^2_{\cl}(M)$ (where the Lie algebra of vector fields acts on
the abelian Lie algebra of closed 2-forms by Lie derivative). This
leads to the following characterization of the Picard Lie algebra,
which reveals its connection with Poisson cohomology:

\begin{cor}
\label{cor:Picard:Lie:algebra} The Picard Lie algebra is given by
 \[
 \pic(M)=H^1(\pi^\sharp),
 \]
the 1st relative cohomology group associated with the morphism of
complexes
\[
\pi^\sharp:(\Omega^\bullet(M),\d_{\mathrm{dR}})\to
(\X^\bullet(M),\d_\pi).
\]
In particular, it fits into a long exact sequence:
\[
\xymatrix{ \cdots \ar[r] & H^1(M)\ar[r] & H^1_\pi(M)\ar[r]
&\pic(M)\ar[r] & H^2(M)\ar[r] & H^2_\pi(M)\ar[r] &\cdots} \]
\end{cor}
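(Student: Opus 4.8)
The plan is to deduce Corollary~\ref{cor:Picard:Lie:algebra} directly from the exact sequence of Theorem~\ref{thm:main:algebra:introd} by recognizing the four-term sequence as the defining sequence of a relative cohomology group. First I would recall the mapping-cone construction: given a morphism of cochain complexes $f\colon (A^\bullet, d_A)\to (B^\bullet, d_B)$, the relative cohomology $H^\bullet(f)$ is the cohomology of the cone complex $C^\bullet$ with $C^n = A^n \oplus B^{n-1}$ and differential $D(a,b) = (d_A a,\, f(a) - d_B b)$, fitting into the standard long exact sequence $\cdots \to H^n(A) \to H^n(B) \to H^n(f)\to H^{n+1}(A)\to\cdots$. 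Here I take $A^\bullet = (\Omega^\bullet(M), \d_{\mathrm{dR}})$, $B^\bullet = (\X^\bullet(M), \d_\pi)$, and $f = \pi^\sharp$, so that the relevant cone group is $C^1 = \Omega^1(M)\oplus \X^0(M) = \Omega^1(M)\oplus C^\infty(M)$.

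The key computation is to identify $H^1(\pi^\sharp)$ explicitly and match it against $\pic(M)$. A cocycle in degree $1$ is a pair $(\alpha, h)\in \Omega^1(M)\oplus C^\infty(M)$ with $\d_{\mathrm{dR}}\alpha = 0$ and $\pi^\sharp(\alpha) = \d_\pi h = [\pi,h] = -\pi^\sharp(\d h)$; coboundaries are pairs of the form $(\d_{\mathrm{dR}} g,\, \pi^\sharp g)$ for $g\in C^\infty(M) = A^0$. I would then show that this cocycle/coboundary description agrees with the cokernel appearing in Theorem~\ref{thm:main:algebra:introd}. The cleanest route is to exhibit an explicit map from $\gg_\pi(M)$ onto the cone that descends to an isomorphism $\pic(M)\xrightarrow{\sim} H^1(\pi^\sharp)$: the Lie algebra $\gg_\pi(M)$ consists of pairs $(Z,\beta)$ with $\d\beta=0$ and $\Lie_Z\pi = \pi^\sharp(\beta)$, and the preceding terms $\Omega^1(M)$ and $\Omega^1_{\cl,\bas}(M)$ are precisely the kernels one divides by. Comparing the explicit differentials, the condition $\Lie_Z\pi = \pi^\sharp(\beta)$ is the cone-cocycle condition once one encodes $(Z,\beta)$ through the Koszul complex, and the image of $\Omega^1(M)$ under $\alpha\mapsto(\pi^\sharp\alpha, \d\alpha)$ accounts for the coboundaries, so the quotient is $H^1(\pi^\sharp)$.

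Once the isomorphism $\pic(M)=H^1(\pi^\sharp)$ is established, the long exact sequence in the statement is immediate: it is nothing but the standard long exact sequence of the mapping cone of $\pi^\sharp$, where $H^k(A^\bullet) = H^k(M)$ is de~Rham cohomology and $H^k(B^\bullet) = H^k_\pi(M)$ is Poisson cohomology. I would simply transcribe the connecting maps and verify that the arrow $H^1_\pi(M)\to\pic(M)$ and the arrow $\pic(M)\to H^2(M)$ agree with the natural projections coming from the cone, so no further work is needed beyond quoting the functoriality of the cone construction.

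The main obstacle I anticipate is the bookkeeping in matching the four-term sequence of Theorem~\ref{thm:main:algebra:introd} with the cone: one must check that the map $\Omega^1(M)\to\gg_\pi(M)$ in that theorem, together with the Koszul bracket structure, is compatible with the cone differential $D$ up to the degree shift, and in particular that $\Omega^1_{\cl,\bas}(M)$ is exactly the image of $A^0=C^\infty(M)$ under the coboundary — i.e.\ that $\alpha = \d g$ with $\pi^\sharp\alpha=0$ forces $\alpha$ to be closed and basic, and conversely. This is a direct but slightly delicate unwinding of definitions, essentially the verification that $\Ker(\pi^\sharp)$ on closed $1$-forms consists of the closed basic forms; I do not expect any genuine difficulty, only the need to keep the two gradings and the sign in $\d_\pi = [\pi,-]$ straight throughout.
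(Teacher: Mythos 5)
Your overall strategy---unwind the mapping cone of $\pi^\sharp$ degree by degree, match cocycles and coboundaries against the definition of $\pic(M)$ as $\frakg_\pi(M)/\mathfrak{I}$, and then quote the standard long exact sequence of the cone---is exactly the paper's proof of this corollary. However, your write-up contains an off-by-one indexing error that makes the central computation come out wrong. You define the cone by $C^n = A^n\oplus B^{n-1}$, which gives $C^1=\Omega^1(M)\oplus C^\infty(M)$, and you describe degree-$1$ cocycles as pairs $(\alpha,h)$ with $\d\alpha=0$ and $\pi^\sharp\alpha=\d_\pi h$, modulo coboundaries $(\d g, g)$. That quotient is \emph{not} $\pic(M)$: normalizing $h=0$ by subtracting the coboundary $(\d h,h)$ identifies it with $\set{\alpha\in\Omega^1(M) : \d\alpha=0,\ \pi^\sharp\alpha=0}=\Omega^1_{\cl,\bas}(M)$, i.e.\ with what is $H^0(\pi^\sharp)$ in the paper's convention. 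The group $\pic(M)$, whose elements are classes of pairs $(Z,\beta)\in\X(M)\times\Omega^2_{\cl}(M)$ with $\Lie_Z\pi=\pi^\sharp(\beta)$ modulo $\mathfrak{I}=\set{(\pi^\sharp\eta,\d\eta)}$, lives in the cone degree containing $\Omega^2(M)\oplus\X^1(M)$; with your convention that is $H^2$, and there is no natural surjection from $\frakg_\pi(M)$ onto your $C^1=\Omega^1(M)\oplus C^\infty(M)$, so the isomorphism you propose to exhibit cannot exist as stated. Note also that the long exact sequence you quote, $\cdots\to H^n(A)\to H^n(B)\to H^n(f)\to H^{n+1}(A)\to\cdots$, is the one attached to the convention $C^n=A^{n+1}\oplus B^n$ (the one the paper adopts just before the corollary), not to the convention you wrote down; the two halves of your first paragraph are mutually inconsistent, and the corollary's ``$H^1$'' is only correct in the former convention.

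The repair is mechanical: take $C^n=A^{n+1}\oplus B^n$, so $C^1=\Omega^2(M)\oplus\X^1(M)$. Then a $1$-cocycle is a pair $(\beta,Z)$ with $\d\beta=0$ and $\d_\pi Z=\Lie_Z\pi=\pi^\sharp(\beta)$, and a $1$-coboundary $\d(\eta,h)=(\d\eta,\pi^\sharp\eta-\pi^\sharp\d h)$ becomes, after replacing $\eta$ by $\eta-\d h$, exactly an element $(\d\eta,\pi^\sharp\eta)$ of $\mathfrak{I}$; hence $H^1(\pi^\sharp)=\frakg_\pi(M)/\mathfrak{I}=\pic(M)$ by definition, and the long exact sequence follows formally. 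This is precisely the paper's argument. Finally, the ``main obstacle'' you anticipate---checking that $\Omega^1_{\cl,\bas}(M)$ is the image of $A^0=C^\infty(M)$ under the coboundary---is both false (a closed basic $1$-form need not be exact) and unnecessary: $\Omega^1_{\cl,\bas}(M)$ is the \emph{kernel} of $\Omega^1(M)\to\frakg_\pi(M)$, which is what exactness of the four-term sequence at $\Omega^1(M)$ asserts (Theorem~\ref{thm:main:algebra}), but it plays no role in identifying $\pic(M)$ with $H^1(\pi^\sharp)$; that identification only requires matching cocycles with $\frakg_\pi(M)$ and coboundaries with $\mathfrak{I}$, as above.
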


These results can be explored in many situations to compute Picard
groups/Lie algebras, or at least to gain insight into their
structure. For example, from the exact sequence in
Theorem~\ref{thm:main:introd}, one obtains conditions for the Picard
group to coincide with the group $\OutAut(M)$ of \emph{outer Poisson
diffeomorphisms} of $(M,\pi)$: e.g., this happens if all symplectic
bimodules over $M$ admit a Lagrangian bisection (see
Corollary~\ref{cor:Picard:Lagrangian:bisections}).

Deciding whether a bimodule admits a (Lagrangian) bisection is, in
general, a difficult problem. Even when $M$ is a symplectic
manifold, this is a nontrivial issue closely related to the Nielsen
realization problem: in this case, the Picard group
coincides with the group of outer automorphisms of $\pi_1(M)$, and
we will show that a bimodule admits a (Lagrangian) bisection if and
only if the corresponding outer automorphism can be represented by a
(symplectic) diffeomorphism. This issue already played a central
role in the computations in \cite{RS} of the Picard group of log
symplectic structures on compact surfaces.

Another consequence of Corollary \ref{cor:Picard:Lie:algebra} is the
fact that, if $H^2(M)=0$, then
\begin{equation}\label{eq:firstp}
\pic(M)\simeq H^1_\pi(M)/\pi^\sharp(H^1(M)).
\end{equation}
In particular, if $H^2(M)=H^1(M)=0$ then the Picard Lie algebra
$\pic(M)$ is isomorphic to the first Poisson cohomology
$H^1_\pi(M)$, which represents the outer derivations of $(M,\pi)$, i.e., the Lie algebra of  $\OutAut(M)$.

We will apply the results above to recover many of the \emph{ad hoc}
computations of the Picard group that one can find in the literature
and to compute the Picard group (or the Picard Lie algebra) in new
situations.

For example, the Picard group of the Poisson structure
on the dual of a Lie algebra $\gg^*$ has never been calculated before. As an immediate consequence of \eqref{eq:firstp}, we have that,
for any Lie algebra $\gg$,
\[
\pic(\gg^*)\simeq H^1_\pi(\gg^*).
\]
In the case of a semisimple Lie algebra
of compact type we conclude that $\Pic(\gg^*)$ is discrete, because
of the well-known fact that $H^1_\pi(\gg^*)=0$ (see, e.g.,
\cite{GW92}). Actually, in this case, the Picard group coincides
with the finite group of automorphisms of the Dynkin diagram of
$\gg$, since we can prove the following conjecture of \cite{BW04}:

\begin{thm}
If  $\gg$ is a semi-simple Lie algebra of compact type, then
\[
 \Pic(\gg^*)\simeq\OutAut(\gg).
 \]
\end{thm}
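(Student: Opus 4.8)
The plan is to deduce the identity $\Pic(\gg^*)=\OutAut(\gg)$ from two separate facts: first, that for $\gg^*$ every self-Morita bimodule carries a Lagrangian bisection, so that $\Pic(\gg^*)$ reduces to the group of outer Poisson diffeomorphisms via Corollary~\ref{cor:Picard:Lagrangian:bisections}; and second, that this group of outer Poisson diffeomorphisms of the Poisson manifold $\gg^*$ is canonically isomorphic to the outer automorphism group $\OutAut(\gg)$ of the Lie algebra. Throughout, $G$ denotes the (compact, since $\gg$ is of compact type) simply connected Lie group integrating $\gg$, so that $\Sigma(\gg^*)=T^*G$.

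For the second fact I would argue as follows. Semisimplicity forces the origin to be the unique zero-dimensional symplectic leaf (a coadjoint fixed point annihilates $[\gg,\gg]=\gg$), hence every Poisson diffeomorphism $\phi$ of $\gg^*$ fixes $0$, and its linearization $d_0\phi$ is a linear Poisson automorphism, i.e. an element of $\Aut(\gg)$. This defines a homomorphism $L$ from Poisson diffeomorphisms to $\Aut(\gg)$, which is visibly surjective (linear automorphisms split it). Since Hamiltonian vector fields vanish at $0$ and linearize there to inner derivations $\ad_\xi$, the homomorphism $L$ carries inner Poisson diffeomorphisms into $\InnAut(\gg)$ and therefore descends to $\bar L\colon\OutAut(\gg^*)\to\OutAut(\gg)$. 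To see $\bar L$ is injective, suppose $L(\phi)$ is inner; composing with a coadjoint diffeomorphism $\Ad^*_g$ reduces us to the case $d_0\phi=\id$, and then the rescaled family $\phi_t:=m_{1/t}\circ\phi\circ m_t$, with $m_t(x)=tx$, is a smooth Poisson isotopy from $\id$ to $\phi$ --- here the linearity of the Lie--Poisson tensor is precisely what makes each $\phi_t$ Poisson. Its time-dependent generator is a Poisson vector field, hence Hamiltonian because $H^1_\pi(\gg^*)=0$ for $\gg$ of compact type, so $\phi$ is inner. Thus $\bar L$ is an isomorphism.

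For the first fact, the decisive features are that $\gg^*$ is contractible and that the symplectic leaves, being coadjoint orbits $G/G_\xi$ of the compact simply connected group $G$ with connected stabilizers, are simply connected. A self-Morita bimodule over $\gg^*$ is in particular a principal $\Sigma(\gg^*)$-bundle over the contractible base $\gg^*$, hence trivializable; a trivialization produces a bisection and exhibits the bimodule as a twist of $\Sigma(\gg^*)$ by a Poisson diffeomorphism, up to a gauge $2$-form. Since $H^2(\gg^*)=0$, this $2$-form is exact and can be absorbed, upgrading the bisection to a Lagrangian one. I expect this first fact to be the main obstacle: the results following Theorem~\ref{thm:main:introd} giving $\Pic(M)$ a Lie-group structure were established under the assumption that $\Sigma(M)$ is compact, which fails here since $T^*G$ is non-compact, so one must instead exploit directly the contractibility of $\gg^*$, the vanishing $H^2(\gg^*)=0$, and the simple-connectedness of the leaves (which guarantees that there is no leafwise contribution to $\Pic$) to control the global structure of an arbitrary bimodule and to deform a section into a Lagrangian bisection.

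Finally, these two facts combine to give $\Pic(\gg^*)\cong\OutAut(\gg^*)\cong\OutAut(\gg)$; consistently, Corollary~\ref{cor:Picard:Lie:algebra} together with $\pic(\gg^*)\simeq H^1_\pi(\gg^*)=0$ shows $\Pic(\gg^*)$ is discrete, matching the fact that $\OutAut(\gg)$ is the finite group of Dynkin diagram automorphisms.
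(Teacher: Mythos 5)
Your overall strategy is exactly the paper's: first show that every self Morita bimodule over $\gg^*$ admits a (Lagrangian) bisection, so that $\Pic(\gg^*)\simeq\OutAut(\gg^*)$, and then prove $\OutAut(\gg^*)\simeq\OutAut(\gg)$ by linearizing at the origin and running a rescaling Moser argument based on $H^1_\pi(\gg^*)=0$. Your second step is essentially the paper's Proposition~\ref{prop:compact:type:2}, up to small points of rigor: the paper justifies that $\Ad^*_g$ is an \emph{inner} Poisson automorphism (which your reduction to $\d_0\phi=\id$ tacitly needs) by writing it as the automorphism induced by the Lagrangian bisection $\exp(\d f_v)$ with $\exp(v)=g$, using surjectivity of $\exp$ for compact groups; and it passes from ``the generator is Hamiltonian for each $t$'' to ``$\phi$ is inner'' by choosing the Hamiltonians $f_t$ smoothly in $t$ and integrating them to a path of Lagrangian bisections $b_t=\exp(\d f_t)$ whose induced automorphisms are the $\phi_t$.

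The genuine gap is in your first step. A trivialization of the principal $\Sigma(\gg^*)$-bundle $q:S\to\gg^*$ --- which indeed exists, since $q$ underlies a principal $G$-bundle over a contractible base --- is just a section $b$ of $q$, and a section of $q$ is \emph{not} a bisection: by definition, a bisection must also project diffeomorphically under the other leg, i.e.\ $p\circ b:\gg^*\to\gg^*$ must be a diffeomorphism. This fails for arbitrary sections even of the trivial bimodule: in $\Sigma(\gg^*)=T^*G\cong G\times\gg^*$ with $q=\s$, $p=\t$, a section of $\s$ has the form $\xi\mapsto(g(\xi),\xi)$ for an arbitrary smooth map $g:\gg^*\to G$, and then $p\circ b(\xi)=\Ad^*_{g(\xi)}\xi$ is in general not even injective. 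So contractibility of $\gg^*$ (plus $H^2(\gg^*)=0$ and simple-connectedness of the coadjoint orbits) does not by itself produce a bisection; one must construct a section adapted to \emph{both} legs simultaneously, and that is precisely the content --- and the hard part --- of the paper's Proposition~\ref{prop:compact:type:1}, the obstacle you correctly anticipated but did not overcome. The paper's construction is genuinely symplectic: since $H^1(G)=H^2(G)=0$, one gets $H^2(S)=0$, hence $\omega=\d\al$ with $\al$ chosen $G$-invariant (by averaging over the two compact actions) and vanishing along $p^{-1}(0)=q^{-1}(0)$; the Liouville field $Y$ with $i_Y\omega=\al$ is then corrected, using $H^1_\pi(\gg^*)=0$, by the Hamiltonian vector fields $X_{h_1\circ p}$ and $X_{h_2\circ q}$ so that the resulting field $\tilde{E}$ projects to the Euler vector field of $\gg^*$ under both $p$ and $q$; the backward flow of $\tilde{E}$ exhibits $S$ as a vector bundle over $G=p^{-1}(0)=q^{-1}(0)$, and a fiber of this bundle is the desired bisection. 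Nothing in your sketch substitutes for this argument.
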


This result is remarkable in that, on the right hand side, one has
the automorphisms of the Dynkin diagram, a combinatorial object,
while on the left side one has an object only determined by the
Poisson geometry.  Our proof consists of two steps: we first show
that $\Pic(\gg^*)\simeq\OutAut(\gg^*)$ using
Theorem~\ref{thm:main:introd} (see
Corollary~\ref{cor:Picard:Lagrangian:bisections}); then we check
that $\OutAut(\gg^*)\simeq\OutAut(\gg)$ using a Moser-type trick. We
conjecture that for a general Lie algebra the first isomorphism
still holds.

\vskip 10 pt

\noindent \textbf{Acknowledgments.} We are thankful to several
institutions for their hospitality during various stages of this
project, including IST and UIUC (H.B.) as well as IMPA (R.L.F.). We
would like to thank Gustavo Granja for pointing out the construction
in Example \ref{ex:Gustavo}. We also thank Marius Crainic, David Li-Bland, David
Martinez Torres, Eckhard Meinrenken and Alan Weinstein for valuable discussions and
comments on the paper.

%%%%%%%%%%%%%%%%%%%%%%%%%%%%%%%%%%%%%%%%%%%%%%%%%%%%%%%%%%%%%%%%%%%%%%%%%%%%
%%%%%%%%%%%%%%%%%%%%%%%%%%%%%%%%%%%%%%%%%%%%%%%%%%%%%%%%%%%%%%%%%%%%%%%%%%%%
\section{Symplectic realizations and Morita equivalence}
\label{sec:basic:concepts}
%%%%%%%%%%%%%%%%%%%%%%%%%%%%%%%%%%%%%%%%%%%%%%%%%%%%%%%%%%%%%%%%%%%%%%%%%%%%
%%%%%%%%%%%%%%%%%%%%%%%%%%%%%%%%%%%%%%%%%%%%%%%%%%%%%%%%%%%%%%%%%%%%%%%%%%%%

We denote by $(M,\{~,~\})$ a Poisson manifold with associated
Poisson bivector field $\pi\in\X^2(M)$. The corresponding bundle map
is denoted by $\pi^\sharp:T^*M\to TM$, so that Hamiltonian vector
fields are given by
\[ X_f:=\{f,\cdot\}=\pi^\sharp\d f. \]
The space of Hamiltonian vector fields is denoted by $\X_\Ham(M)$.
In this section, we review the necessary background on Poisson
geometry.

%%%%%%%%%%%%%%%%%%%%%%%%%%%%%%%%%%%%%%%%%%%%%%%%%%%%%%%%%%%%%%%%%%%%%%%%%%%%
\subsection{Symplectic realizations}
%%%%%%%%%%%%%%%%%%%%%%%%%%%%%%%%%%%%%%%%%%%%%%%%%%%%%%%%%%%%%%%%%%%%%%%%%%%%

One way to unravel the complicated geometry of a Poisson manifold
$(M,\pi)$ is to exhibit $M$ as a quotient of a symplectic manifold.
This leads to the following definition.

\begin{defn}
A \textbf{symplectic realization} of $(M,\pi)$ is a symplectic
manifold $(S,\omega)$ together with a surjective submersion $J:S\to
M$ which is also a Poisson map: $J_*\omega^{-1}=\pi$. We say that
two symplectic  realizations $J_i: S_i\to M$, $i=1,2$, are
\textbf{isomorphic} if there exists a symplectomorphism $\varphi:
S_1\to S_2$ such that $J_1 = J_2\circ \varphi$.
\end{defn}

Symplectic realizations were introduced by Weinstein in the
foundational paper \cite{We83}, where the next result is proven.

\begin{thm}[Weinstein \cite{We83}]
Any Poisson manifold admits a Hausdorff symplectic realization.
\end{thm}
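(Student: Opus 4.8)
The plan is to produce a symplectic realization directly on an open neighbourhood of the zero section inside $T^*M$, which is then Hausdorff for free, by means of the \emph{Poisson spray} construction. A Poisson spray is a vector field $\mathcal V$ on $T^*M$ whose projection under $p\colon T^*M\to M$ is the anchor, i.e.\ $p_*(\mathcal V_\alpha)=\pi^\sharp(\alpha)$ for every $\alpha\in T^*M$, and which is homogeneous of degree one with respect to fibrewise scaling (equivalently $[\mathcal E,\mathcal V]=\mathcal V$, where $\mathcal E$ is the Euler vector field). My first step is to observe that such sprays exist with no hypotheses on $(M,\pi)$: the horizontal lift of $\pi^\sharp$ with respect to any linear connection on $T^*M$ is one, since horizontal lifting projects correctly onto the anchor and is fibrewise equivariant, while $\pi^\sharp$ itself is fibrewise linear.

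Next, writing $\omega_{\can}$ for the canonical symplectic form on $T^*M$ and $\phi_t$ for the flow of $\mathcal V$ (defined near the zero section for $t\in[0,1]$), I would introduce the closed two-form
\[
\Omega:=\int_0^1\phi_t^*\omega_{\can}\,\d t.
\]
Because $\mathcal V$ vanishes where $\alpha=0$ — forced by the anchor condition and homogeneity — the flow $\phi_t$ fixes the zero section $M\subset T^*M$ pointwise, and a short computation with the linearization of $\phi_t$ along $M$ shows that $\Omega$ restricts to a \emph{nondegenerate} bilinear form at each point of the zero section. Note that here $\Omega$ does \emph{not} coincide with $\omega_{\can}$, the difference being a fibrewise term built from $\pi$, but nondegeneracy survives. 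Since nondegeneracy is an open condition, $\Omega$ is symplectic on some open neighbourhood $\Sigma$ of $M$ in $T^*M$. This $\Sigma$ is Hausdorff, being an open subset of $T^*M$, and $p|_\Sigma\colon\Sigma\to M$ is a surjective submersion, as it restricts to the identity on the zero section.

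The heart of the argument, and the step I expect to be the main obstacle, is to verify that $p\colon(\Sigma,\Omega)\to(M,\pi)$ is a Poisson map, i.e.\ that $p_*\Omega^{-1}=\pi$ (after possibly adjusting signs by replacing $\mathcal V$ or $\Omega$ by its negative). This is where the two defining properties of the spray must be combined: one uses Cartan's formula to control the variation of $\phi_t^*\omega_{\can}$ in $t$ through the contraction $i_{\mathcal V}\omega_{\can}$, and then feeds in both the anchor condition $p_*\mathcal V=\pi^\sharp$ and the degree-one homogeneity to show that the $\Omega$-Hamiltonian vector field of any $p^*f$ projects under $p$ to $\pi^\sharp(\d f)$, with $\{p^*f,p^*g\}_\Omega$ basic and equal to $p^*\{f,g\}_\pi$. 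The delicate point is precisely that homogeneity is exactly what makes the averaging over $[0,1]$ interact compatibly with the spray flow; without it this identity fails.

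Assembling these pieces, $(\Sigma,\Omega,p)$ is a Hausdorff symplectic realization of $(M,\pi)$, which proves the theorem. An advantage of this approach is that it requires no integrability hypothesis on $(M,\pi)$ and is manifestly Hausdorff and global from the outset. As an alternative one could instead follow Weinstein's original local route, solving the first-order system $\{J^i,J^j\}_{\omega_{\can}}=\pi^{ij}\circ J$ with $J|_{p=0}=\mathrm{id}$ in Darboux-type coordinates to obtain local Hausdorff realizations around each point; the spray construction is preferable here mainly because it avoids the patching that this local method would then require.
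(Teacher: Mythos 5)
Your proposal is correct and is essentially the paper's own argument: a Poisson spray is precisely the geodesic vector field of a contravariant connection, so your flow $\phi_t$ is the paper's geodesic flow $\Phi^t$, and your averaged form $\Omega=\int_0^1\phi_t^*\Omega_\can\,\d t$ on a neighborhood of the zero section of $T^*M$ is exactly the paper's $\omega=\int_0^1(\Phi^t)^*\Omega_\can\,\d t$, with Hausdorffness coming for free from working inside $T^*M$. Both are the Crainic--Marcut construction \cite{CrMr11}, and both your write-up and the paper's defer the same key verification (that $p$ is a Poisson map) to that reference rather than carrying it out in full.
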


One recent version of the proof of this result (see \cite{CrMr11})
goes as follows. Recall (see \cite{Vai94}) that a contravariant
connection $\nabla:\Omega^1(M)\times\Omega^1(M)\to\Omega^1(M)$,
written $(\al,\be)\mapsto\nabla_\al\beta$, is a $\Rr$-bilinear map
satisfying
\[
\nabla_{f\al}\be=f\nabla_\al\beta,\quad
\nabla_\al(f\beta)=f\nabla_\al\beta+\Lie_{\pi^\sharp\al}(f)\beta.\]
For such connections, one can define parallel transport along
cotangent paths, i.e., paths $a:I\to T^*M$ such that
\[ \frac{\d}{\d t}p(a(t))=\pi^\sharp(a(t)),\quad \forall t\in I.\]
Here $I=[0,1]$ denotes the unit interval, and $p:T^*M\to M$ is the
bundle projection. In particular, a geodesic of $\nabla$ is a
cotangent path $a:I\to T^*M$ such that $\nabla_{a(t)}a(t)=0$, for
all $t\in I$. Just as for usual connections, one can define the
geodesic flow, which now is a 1-parameter group of (locally defined)
diffeomorphisms $\Phi^t:T^*M\to T^*M$. For more details on these
constructions we refer to \cite{CrFe11}.

In order to construct a symplectic realization of $(M,\pi)$, we
choose any contravariant connection $\nabla$ and consider its geodesic flow
$\Phi^t$. If $\Omega_\can$ is the canonical symplectic form on
$T^*M$, then the form
\[ \omega=\int_0^1(\Phi^t)^*\Omega_\can\ \d t\]
is well defined and symplectic on a neighborhood $S$ of the zero
section in $T^*M$. The restriction of the canonical projection then
gives the desired symplectic realization $p:(S,\omega)\to (M,\pi)$
(see \cite{CrMr11} for details).

%%%%%%%%%%%%%%%%%%%%%%%%%%%%%%%%%%%%%%%%%%%%%%%%%%%%%%%%%%%%%%%%%%%%%%%%%%%%
\subsection{Complete symplectic realizations}
%%%%%%%%%%%%%%%%%%%%%%%%%%%%%%%%%%%%%%%%%%%%%%%%%%%%%%%%%%%%%%%%%%%%%%%%%%%%

The fibers of a symplectic realization are typically not compact.
Requiring compactness is usually too strong a condition. In fact, in
Poisson geometry one replaces the notion of a proper map by the
notion of a complete map: a Poisson map $\phi:(M,\pi_M)\to(N,\pi_N)$
is called \textbf{complete} if whenever $X_h\in\X_\Ham(N)$ is a
complete vector field, it follows that the vector field
$X_{h\circ\phi}\in\X_\Ham(M)$ is also complete. Complete symplectic
realizations do not always exist, as shown by the following result.

\begin{thm}[Crainic \& Fernandes \cite{CrFe04}]
\label{thm:complete:realz}
A Poisson manifold admits a complete symplectic realization if and only if it is integrable.
\end{thm}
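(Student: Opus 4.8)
The plan is to prove the two implications separately; the forward one is essentially formal while the converse is the substantial half.

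For ``integrable $\Rightarrow$ complete symplectic realization'' I would start from a symplectic groupoid $\Sigma(M)\tto M$ integrating $(M,\pi)$ and show that its source map $\s\colon(\Sigma(M),\omega)\to(M,\pi)$ is a complete symplectic realization. That $\s$ is a symplectic realization is standard, so the content is completeness. Fix $h\in C^\infty(M)$. Multiplicativity of $\omega$ forces the $\s$- and $\t$-fibers to be symplectically orthogonal, so $\{\s^*h,\t^*k\}=0$ for all $k$, whence $X_{\s^*h}$ is tangent to the $\t$-fibers; in fact $X_{\s^*h}$ is precisely the right-invariant vector field on $\Sigma(M)$ determined by the section $\d h\in\Gamma(T^*M)$, and it projects under $\s$ to $\pi^\sharp\d h=X_h$. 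Since a right-invariant vector field on a Lie groupoid is complete as soon as its base projection is (its flow is reconstructed from the base flow by right translation), completeness of $X_h$ yields completeness of $X_{\s^*h}$. This is exactly completeness of the realization $\s$.

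For the converse ``complete symplectic realization $\Rightarrow$ integrable,'' suppose $J\colon(S,\omega)\to(M,\pi)$ is complete. The strategy is to integrate the cotangent Lie algebroid $A=T^*M$ (anchor $\pi^\sharp$, Koszul bracket $[\,,\,]_\pi$) using $J$. First I would upgrade $J$ to an infinitesimal action of $A$ on the submersion $J$: each $\alpha\in\Omega^1(M)=\Gamma(A)$ determines a vector field $\mathcal X_\alpha$ on $S$ by $\iota_{\mathcal X_\alpha}\omega=J^*\alpha$, and a direct computation using that $J$ is Poisson shows $\alpha\mapsto\mathcal X_\alpha$ is a Lie algebroid morphism $J^*A\to TS$ covering $J$, with $\mathcal X_{\d f}=X_{J^*f}$, anchor relation $\d J(\mathcal X_{\d f})=\pi^\sharp\d f\circ J$, and $[\mathcal X_{\d f},\mathcal X_{\d g}]=\mathcal X_{[\d f,\d g]_\pi}$. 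Under this translation the completeness hypothesis on $J$ becomes precisely completeness of the flows $\mathcal X_{\d h}$ whenever $X_h$ is complete on $M$.

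The crux, and the step I expect to be the main obstacle, is to convert this complete $A$-action into a genuine integration of $A$. I would analyze the Weinstein groupoid $\mathcal G(A)=P(A)/\!\sim$ of cotangent paths modulo cotangent homotopy: it is always a topological groupoid integrating $A$, and it is a Lie groupoid exactly when the monodromy groups $\mathcal N_x\subset\ker\pi^\sharp_x$ are locally uniformly discrete. The complete action lets one integrate cotangent paths to honest curves inside the fibers $J^{-1}(x)$ of the smooth manifold $S$, so that the monodromy of a family of cotangent loops is realized as an actual displacement within $J^{-1}(x)$; smoothness of $S$ then obstructs any non-discrete accumulation, forcing each $\mathcal N_x$ to be discrete. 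Controlling this uniformly in $x$ — i.e.\ showing discreteness persists in families, which is exactly where completeness (global existence of the integrating flows) does the real work — is the technical heart of the argument. Once discreteness is established, $\mathcal G(A)$ is smooth, its canonical multiplicative symplectic structure makes it a symplectic groupoid, and $(M,\pi)$ is integrable.
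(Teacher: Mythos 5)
Your forward implication is correct and is essentially the paper's (modulo the source/target sign convention: in the paper's conventions $\t$ is the Poisson leg): Hamiltonian vector fields of pulled-back functions are the invariant vector fields of sections $\d h$, and an invariant vector field is complete once its base projection is. The first step of your converse also matches the argument the paper sketches from \cite{CrFe04}: the infinitesimal action $\alpha\mapsto \mathcal{X}_\alpha=\omega^{-1}(J^*\alpha)$, with completeness of $J$ translating into completeness of the lifting flows.

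The gap is in your discreteness mechanism, and it is a real one. An element $v\in\mathcal{N}_x\subset\ker\pi^\sharp_x$ is, by definition, a cotangent loop that is cotangent-homotopic to the trivial path; completeness lets you lift not just paths but the whole homotopy from a fixed $u\in J^{-1}(x)$, and the endpoints of the lifted family are constant (this is exactly the well-definedness of the Mikami--Weinstein action, Theorem~\ref{thm:MW88}, which the paper emphasizes needs no smooth structure on $\Sigma(M)$). Hence every monodromy element produces \emph{zero} displacement in $J^{-1}(x)$ --- the opposite of your claim that monodromy ``is realized as an actual displacement'' --- and ``smoothness of $S$ obstructs non-discrete accumulation'' is not an argument, since points accumulate freely in smooth manifolds. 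The correct way to finish along your route is: let $F_x\colon\ker\pi^\sharp_x\to J^{-1}(x)$ send $v$ to the endpoint of the lift of the constant path $v$ from $u(x)$, where $x\mapsto u(x)$ is a local section of $J$. Then $F_x$ is smooth in $(x,v)$ with differential $v\mapsto\omega^{-1}((\d J)^*v)$ at $v=0$, which is injective because $J$ is a submersion and $\omega$ is nondegenerate; so $F_x$ is injective on a ball of radius uniform in $x$. Since completeness gives $\mathcal{N}_x\subset F_x^{-1}(u(x))$, this is precisely local uniform discreteness. Note this also inverts your attribution of difficulty: completeness does its real work in proving $\mathcal{N}_x\subset F_x^{-1}(u(x))$ (the homotopies one must lift are large even when $v$ is small), whereas the uniformity in $x$ that you deferred as ``the technical heart'' is soft (smooth dependence of ODE solutions on parameters). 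The paper itself avoids monodromy groups altogether: it identifies the action groupoid $\Sigma(M)\ltimes S$ with the homotopy groupoid of the symplectic-orthogonal foliation of $S$, which is automatically a Lie groupoid, and descends the smooth structure to $\Sigma(M)$.
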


Since some of the ideas behind this result will be useful in the
sequel, we make a small digression into the notion of integrability,
following the approach due to Cattaneo-Felder \cite{CaFe01} and
Crainic-Fernandes \cite{CrFe04,CrFe03}.

First of all, the cotangent bundle $T^*M$ of any Poisson manifold
carries a Lie algebroid structure with anchor $\pi^\sharp:T^*M\to
TM$ and Lie bracket
$[~,~]_\pi:\Omega^1(M)\times\Omega^1(M)\to\Omega^1(M)$ given by the
Koszul bracket
\begin{equation}
\label{eq:Koszul:bracket}
[\eta_1,\eta_2]_\pi=\Lie_{\pi^\sharp\eta_1}\eta_2-\Lie_{\pi^\sharp\eta_2}\eta_1-\d(\pi(\eta_1,\eta_2)).
\end{equation}
For a Poisson manifold $M$ we denote by $\Sigma(M)=\G(T^*M)$ its
canonical integration:
\[
\Sigma(M)=\frac{\text{cotangent paths}}{\text{cotangent
homotopies}}.
\]
We recall that $\Sigma(M)$ is a \emph{topological groupoid} with
simply connected source-fibers. It is a (infinite dimensional)
symplectic quotient of the space of all paths in cotangent bundle
$P(T^*M)\simeq T^* P(M)$. Notice that our groupoids need not be Hausdorff (although the base and source/target fibers are always assumed to be Hausdorff).

A Poisson manifold $M$ is said to be \textbf{integrable} if the
associated Lie algebroid $T^*M$ is integrable, i.e., it arises from a \emph{Lie groupoid}.
This happens if and only if $\Sigma(M)\tto M$ is a Lie groupoid, i.e., the
quotient above is a smooth manifold. In this case, the quotient
symplectic structure $\Omega$ on $\Sigma(M)$ makes it into a
\textbf{symplectic groupoid}. This means that the symplectic
structure and the multiplication are compatible: if
$m:\Sigma(M)^{(2)}\to\Sigma(M)$ denotes the multiplication defined
on the submanifold $\Sigma(M)^{(2)}\subset \Sigma(M)\times\Sigma(M)$
of composable arrows, then
\begin{equation}
\label{eq:multiplicative} m^*\Omega=\pr_1^*\Omega+\pr_2^*\Omega,
\end{equation}
where $\pr_i: \Sigma(M)^{(2)}\to \Sigma(M)$ are the (restrictions of
the) projections on each factor.

The fact that $(\Sigma(M),\Omega)$ is a symplectic groupoid implies
that:
\begin{enumerate}[(i)]
\item the target map $\t:\Sigma(M)\to M$, $[a]\mapsto p(a(0))$
(respectively, the source map $\s:\Sigma(M)\to M$, $[a]\mapsto
p(a(1))$) is Poisson (respectively, anti-Poisson);
\item the identity section $\eps:M\to\Sigma(M)$, $m\mapsto [0_m]$ is a
Lagrangian embedding;
\item the inverse map $\iota:\Sigma(M)\to\Sigma(M)$, $[a]\mapsto
[a]^{-1}:=[\bar{a}]$ is an anti-symplectic involution (here
$\overline{a}$ denotes the cotangent path $\bar{a}(t):=-a(1-t)$).
\end{enumerate}
Actually, it is not hard to check that the target fibration
$\t:(\Sigma(M),\Omega)\to M$ defines a \emph{complete} symplectic
realization, and this gives (the easy) half of Theorem
\ref{thm:complete:realz}. The more difficult part of the theorem
follows from the fact that complete symplectic realizations can be
thought of as symplectic $\Sigma(M)$-modules:

\begin{thm}[Mikami \& Weinstein \cite{MiWe88}]\label{thm:MW88}
Every complete symplectic realization $p:(S,\omega)\to(M,\pi)$
determines a symplectic action of $\Sigma(M)\tto M$ on $p:S\to M$.
\end{thm}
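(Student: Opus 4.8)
The plan is to first linearize the problem by producing an infinitesimal action of the cotangent Lie algebroid $T^*M$ on $p\colon S\to M$, and then to integrate this infinitesimal action to a global action of $\Sigma(M)=\G(T^*M)$; the obstruction to the integration step is exactly what the completeness hypothesis removes. For the infinitesimal action I would assign to each $1$-form $\alpha\in\Omega^1(M)$ the vector field $\widehat\alpha:=\omega^{-1}(p^*\alpha)\in\X(S)$, obtained by applying the Poisson bivector of the symplectic manifold $S$ to the pulled-back form. Observe that $\widehat{\d f}=X_{f\circ p}$ is the Hamiltonian vector field of the pulled-back function, consistently with the conventions of the excerpt.

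First I would verify that $\alpha\mapsto\widehat\alpha$ is a genuine Lie algebroid action, that is, a bracket-preserving bundle map over $p$ that intertwines the anchors. The $C^\infty$-linearity $\widehat{f\alpha}=(f\circ p)\,\widehat\alpha$ is immediate, and the anchor compatibility---namely that $\widehat\alpha$ is $p$-related to $\pi^\sharp\alpha$---is a one-line consequence of $p$ being Poisson: pairing $Tp(\widehat\alpha)$ against a covector $\xi$ yields $\omega^{-1}(p^*\alpha,p^*\xi)=\pi(\alpha,\xi)\circ p=\langle\xi,\pi^\sharp\alpha\rangle\circ p$. The only substantive formal point is the bracket identity $\widehat{[\alpha,\beta]_\pi}=[\widehat\alpha,\widehat\beta]$ for the Koszul bracket \eqref{eq:Koszul:bracket}; this is a routine computation using $\d\omega=0$, Cartan calculus, and the two properties just established.

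The crux, and the only genuine obstacle, is the integration step. An element of $\Sigma(M)$ is a cotangent-homotopy class $[a]$ of a cotangent path $a\colon I\to T^*M$ with base path $\gamma=p\circ a$ satisfying $\dot\gamma=\pi^\sharp(a)$. I would define the action by parallel transport: choose a time-dependent Hamiltonian $H_t$ on $M$ with $\d H_t|_{\gamma(t)}=a(t)$, so that $\gamma$ is an integral curve of $X_{H_t}$ and the infinitesimal action lifts this data to the time-dependent vector field $\widehat{\d H_t}=X_{H_t\circ p}$ on $S$; letting $\psi_t$ be its flow, set $[a]\cdot s:=\psi_1(s)$ for $s$ with $p(s)=\s([a])$. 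A priori $\psi_t$ need not exist up to $t=1$, and this is precisely where completeness enters: since $\gamma$ is defined on the compact interval $I$, one may take $H_t$ compactly supported, hence with complete flow, and completeness of the realization then forces $X_{H_t\circ p}$ to be complete on $S$, so $\psi_1$ is globally defined. For a non-complete realization this flow can escape in finite time, and the action genuinely fails to exist.

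Finally I would check that the assignment is well defined and symplectic. Independence of the chosen generator $H_t$ and invariance under cotangent homotopies both follow from flatness of the infinitesimal action (it is a Lie algebroid morphism), so that the time-one transport depends only on the class $[a]$; the concatenation behaviour of the flows then yields the groupoid axioms $\eps(m)\cdot s=s$ and $([a][b])\cdot s=[a]\cdot([b]\cdot s)$. That the action is symplectic---equivalently, that the graph $\{([a],s,[a]\cdot s)\}$ is Lagrangian in $\Sigma(M)\times S\times\overline S$---follows because everything is built from Hamiltonian flows of the closed forms $p^*\alpha$: each $\psi_t$ satisfies $\psi_t^*\omega=\omega$, and the multiplicativity \eqref{eq:multiplicative} of $\Omega$ matches the composition of flows, giving the required compatibility between $\omega$, $\Omega$ and the action map. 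To reiterate, Steps one, two and four are formal; the real content is the completeness-driven global existence of the transport flows in Step three.
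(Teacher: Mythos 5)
Your proposal follows essentially the same route as the paper: the lifting equation $(\d_{\tilde{\gamma}(t)}p)^*a(t)=i_{\dot{\tilde{\gamma}}(t)}\omega$ used there is exactly your ODE $\dot{\tilde{\gamma}}(t)=\widehat{a(t)}|_{\tilde\gamma(t)}=\omega^{-1}\bigl(p^*a(t)\bigr)$, and the action is in both cases the time-one transport $[a]\cdot u:=\tilde{\gamma}(1)$, with completeness guaranteeing that the lift reaches $t=1$. You merely make explicit (via compactly supported Hamiltonians $H_t$ and homotopy invariance of the transport) the points the paper leaves implicit with the phrase ``completeness guarantees that the lift is defined'' and its citation of \cite{MiWe88}.
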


In fact, let $[a]\in\Sigma(M)$ be represented by a cotangent path
$a:I\to T^*M$ with base path $\gamma(t)$. Given any $u\in S$ such
that $p(u)=\s([a])=\gamma(0)$, there exists a unique path
$\tilde{\gamma}:I\to S$, lifting $\gamma$ and starting at $u$
($\tilde{\gamma}(0)=u$), satisfying
\[
(\d_{\tilde{\gamma}(t)}p)^*a(t)=i_{\dot{\tilde{\gamma}}(t)}\omega.\]
Then $[a]\cdot u:=\tilde{\gamma}(1)$ defines an action of
$\Sigma(M)\tto M$ on $p:S\to M$. Note that completeness guarantees
that the lift $\tilde{\gamma}(t)$ is defined for every $t$ up to
$t=1$. The definition of this action \emph{does not} appeal to the
smooth structure on $\Sigma(M)$. In fact, given a complete
symplectic realization, we can use this observation to identify $\Sigma(M)\times S\tto S$
with the homotopy groupoid of the symplectic orthogonal foliation to the fibers of $p:S\to M$, from which it follows that
$\Sigma(M)$ is a Lie groupoid, proving the second half of Theorem
\ref{thm:complete:realz}.

%%%%%%%%%%%%%%%%%%%%%%%%%%%%%%%%%%%%%%%%%%%%%%%%%%%%%%%%%%%%%%%%%%%%%%%%%%%%
\subsection{Lagrangian sections}
%%%%%%%%%%%%%%%%%%%%%%%%%%%%%%%%%%%%%%%%%%%%%%%%%%%%%%%%%%%%%%%%%%%%%%%%%%%%

As we have just observed, the target map
\[
\t:(\Sigma(M),\Omega)\to M
\]
is a complete symplectic realization. In our study of the Picard
group it is important to understand if the converse holds: when is a
complete realization $p:(S,\omega)\to M$ isomorphic to
$\t:(\Sigma(M),\Omega)\to M$? If such an isomorphism exists then (i)
the fibers of $p$ are isomorphic to the fibers of $\t$, hence are
1-connected and (ii) we can transport through this isomorphism the
identity section $\eps:M\to\Sigma(M)$ obtaining a section $b:M\to S$
which is Lagrangian: $b^*\omega=0$. It turns out that these two
necessary conditions are also sufficient:

\begin{thm}[Coste, Dazord \& Weinstein \cite{CDW87}]
\label{thm:canonical:realz} A complete symplectic realization
$p:(S,\omega)\to M$ is isomorphic to $\t:(\Sigma(M),\Omega)\to M$ if
and only if $p:S\to M$ has 1-connected fibers and admits a
Lagrangian section $b:M\to S$. In this case, the isomorphism is
unique.
\end{thm}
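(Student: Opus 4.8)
The plan is to treat the two implications separately; the forward direction is immediate and the converse carries all the content. For the ``only if'' direction, suppose $\varphi\colon(\Sigma(M),\Omega)\to(S,\omega)$ is an isomorphism of realizations, so that $\varphi$ is a symplectomorphism with $p\circ\varphi=\t$. Then $\varphi$ identifies the target fibers of $\t$ (which are $1$-connected) with the fibers of $p$, and it carries the identity section to a section $b:=\varphi\circ\eps$ of $p$ satisfying $b^*\omega=\eps^*\varphi^*\omega=\eps^*\Omega=0$ by property (ii) of a symplectic groupoid. This is exactly the necessity observed in the text preceding the statement.

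For the ``if'' direction I would build the isomorphism from the canonical action. By Theorem~\ref{thm:MW88} the complete realization $p\colon S\to M$ carries a symplectic action of $\Sigma(M)$, and I set
\[
\Phi\colon\Sigma(M)\to S,\qquad \Phi(g):=g\cdot b(\s(g)).
\]
Since $p(g\cdot u)=\t(g)$ and units act trivially, one checks at once that $p\circ\Phi=\t$ and $\Phi\circ\eps=b$, while associativity of the action gives equivariance $\Phi(gh)=g\cdot\Phi(h)$ (for $\s(g)=\t(h)$). The crucial point is that $\Phi$ is symplectic. Factoring $\Phi=a\circ j$ through the action map $a$ and $j:=(\id,b\circ\s)\colon\Sigma(M)\to\Sigma(M)\times_M S$, and using the compatibility of a symplectic action with the two forms (the analogue for actions of the multiplicativity \eqref{eq:multiplicative}), $a^*\omega=\pr_\Sigma^*\Omega+\pr_S^*\omega$, together with $\pr_\Sigma\circ j=\id$ and $\pr_S\circ j=b\circ\s$, one computes
\[
\Phi^*\omega=j^*\!\left(\pr_\Sigma^*\Omega+\pr_S^*\omega\right)=\Omega+\s^*(b^*\omega)=\Omega,
\]
the decisive cancellation being precisely the Lagrangian condition $b^*\omega=0$. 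Hence $\Phi$ is a local symplectomorphism, in particular a local diffeomorphism between equidimensional manifolds.

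It remains to upgrade $\Phi$ to a global diffeomorphism, and this is the step I expect to be the main obstacle. Because $p\circ\Phi=\t$, the map $\Phi$ restricts to local diffeomorphisms $\Phi_x\colon\t^{-1}(x)\to p^{-1}(x)$ between the $1$-connected target fiber of $\Sigma(M)$ and the corresponding fiber of $p$. The plan is to show that each $\Phi_x$ is a covering map; granting this, a covering map with $1$-connected base $p^{-1}(x)$ and connected total space $\t^{-1}(x)$ is a diffeomorphism, so $\Phi_x$, and hence $\Phi$, is a bijection. The covering property is exactly where completeness of $p$ must enter: completeness guarantees that the defining cotangent-path lifts of the action are defined for all $t\in[0,1]$, which is what is needed to lift paths through $\Phi_x$ — equivalently, it is the content of the identification of $\Sigma(M)\ltimes S$ with the monodromy groupoid of the symplectic-orthogonal foliation $\F$ recorded after Theorem~\ref{thm:complete:realz}. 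Setting up this path-lifting (Ehresmann-type) argument cleanly, and verifying that no holonomy obstructs injectivity, is the delicate part of the proof.

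Finally, uniqueness follows from the intrinsic nature of the action. The Mikami--Weinstein action is defined purely from $\omega$ and cotangent-path lifting, with no reference to the smooth structure of $\Sigma(M)$, so any realization isomorphism $\varphi\colon\Sigma(M)\to S$ automatically intertwines the two canonical actions. Equivariance then forces $\varphi(g)=g\cdot\varphi(\eps(\s(g)))$, so that $\varphi$ is completely determined by the Lagrangian section $\varphi\circ\eps$ of $p$; imposing $\varphi\circ\eps=b$ pins down $\varphi=\Phi$, which gives the asserted uniqueness.
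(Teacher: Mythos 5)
Your proposal follows essentially the same route as the paper's own treatment, which consists of the necessity observation in the paragraph preceding the statement, the formula $\Phi([a])=[a]\cdot b(\s([a]))$, and a citation to \cite{CDW87} for the substance. Your necessity argument is the paper's; your $\Phi$ is the paper's map; and the two items you add beyond what the paper writes down are both correct: the computation $\Phi^*\omega=\Omega+\s^*(b^*\omega)=\Omega$, which rests on the identity $a^*\omega=\pr_\Sigma^*\Omega+\pr_S^*\omega$ (i.e.\ on what ``symplectic action'' means in Theorem~\ref{thm:MW88}), and the uniqueness argument via equivariance of realization isomorphisms --- correctly read as uniqueness of the isomorphism carrying $\eps$ to the given section $b$ (literal uniqueness fails, since composing with right translation by any nontrivial Lagrangian bisection yields another isomorphism of the realization $\t\colon\Sigma(M)\to M$ with itself).

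The genuine gap is the globalization step, and the mechanism you propose for it would not go through as stated. Completeness of $p$ is what makes the action well defined: it guarantees that the lifts $\tilde{\gamma}$ of cotangent paths exist up to time $1$, but these lifts run along the symplectic orthogonal foliation $\F=(\ker\d p)^{\perp_\omega}$, which is not contained in the fibers of $p$ (nor, in general, transverse to them). By contrast, the covering property of $\Phi_x\colon\t^{-1}(x)\to p^{-1}(x)$ requires lifting paths that lie \emph{inside} a single $p$-fiber, and completeness provides no such lifts; so ``completeness gives path lifting for $\Phi_x$'' is not a proof, and the two directions should not be conflated. What the identification of $\Sigma(M)\ltimes S$ with the monodromy groupoid of $\F$ actually yields is a covering statement in a different direction: $\Phi$ restricted to a source fiber $\s^{-1}(x)$, namely $g\mapsto g\cdot b(x)$, is the universal covering of the leaf of $\F$ through $b(x)$. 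Converting this into bijectivity of $\Phi$ amounts to showing that every leaf of $\F$ meets $b(M)$ in exactly one point and that the leaves through $b(M)$ are $1$-connected; this is exactly where the Lagrangian condition on $b$ and the $1$-connectedness of the $p$-fibers must be combined, it is the real content of \cite{CDW87}, and neither your outline nor the paper (which defers precisely this point to that reference) supplies it.
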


The isomorphism $\Phi:\Sigma(M)\to S$ is obtained using the action
in Thm.~\ref{thm:MW88}  by
\[
\Phi([a])=[a]\cdot b(\s([a])).
\]
Clearly this isomorphism takes the identity section
$\eps:M\to\Sigma(M)$ to the Lagrangian section $b$.

The cotangent bundle $p: T^*M\to M$ with its canonical symplectic
form $\Omega_\can$ is a very special example of a symplectic
groupoid, where $\s=\t=p$, and multiplication is fibrewise addition.
Recall that the canonical symplectic form $\Omega_\can$ is
characterized by the following fundamental property: for any 1-form
$\al\in\Omega^1(M)$ one has
\[
\al^*\Omega_\can=\d \al,
\]
where on the left-hand side we view $\al$ as a section of $p:T^*M\to
M$. It turns out that this fundamental property of $\Omega_\can$ has
a version valid for any symplectic groupoid as we now explain.

Let $\G\tto M$ be a Lie groupoid. A \textbf{bisection of $\G$} is an
embedded submanifold $L\subset \G$ such that the restrictions of
both $\s$ and $\t$ to $L$ induce diffeomorphisms $L\to M$. We can
always parameterize a bisection by a map $b:M\to \G$ such that
$\s\circ b=$id and $\t\circ b:M\to M$ is a diffeomorphism. The set
of all bisections $\Bis(\G)$ forms a group under the obvious
composition. The set of {\bf Lagrangian bisections} defines a subgroup
denoted by $\LBis(\G)$.

The exponential map of a Lie groupoid $\G\tto M$ is a map
$\exp_\G:\Gamma(A)\to\Bis(\G)$ which associates to any small enough
section of its Lie algebroid $A\to M$ (e.g., a compactly supported
section) a bisection of the groupoid: if $\al\in \Gamma(A)$ then
$\exp(\al)\in\Bis(\G)$ is the bisection defined by
\[
\exp(\al)(m)=\phi^1_{\tilde{\al}}(1_m),
\]
where $\tilde{\al}$ is the right-invariant vector field in $\G$
defined by the section $\al$, and $\phi^t_{\tilde{\al}}$ denotes the
flow of $\tilde{\al}$. It should be clear from the
definition that
\[ \s\circ\exp(\al)=\text{id, }\quad \t\circ\exp(\al)=\phi^1_{\rho(\al)},\]
where $\rho:A\to TM$ denotes the anchor of $A\to M$.

\begin{prop}
\label{prop:bisections:form} Let $(\G,\Omega)\tto M$ be a symplectic
groupoid. Then its exponential map $\exp:\Omega^1(M)\to\Bis(\G)$
satisfies
\[ \exp(\al)^*\Omega=\d\al, \quad \forall \al\in\Omega^1(M).\]
In particular, $\exp(\al)$ is a Lagrangian bisection if and only if $\al$ is a closed 1-form.
\end{prop}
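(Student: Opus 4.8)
The plan is to reduce everything to the fundamental property $\al^*\Omega_\can=\d\al$ of the cotangent groupoid by differentiating along the scaling path $t\mapsto\exp(t\al)$ in the spirit of Moser, the key input being a contraction identity for the right-invariant vector field $\tilde\al$.

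First I would record that, since $(\G,\Omega)$ is a symplectic groupoid, its Lie algebroid is $T^*M$ and $\Gamma(A)=\Omega^1(M)$, so $\exp$ is indeed defined on $1$-forms. The central lemma I would establish is the contraction formula
\[ i_{\tilde\al}\Omega=\t^*\al,\qquad \al\in\Omega^1(M). \]
To prove it I would first check it on exact forms $\al=\d f$: multiplicativity \eqref{eq:multiplicative} forces $\t$ to be Poisson, which is equivalent to the statement that the Hamiltonian vector field of $\t^*f$ is precisely the right-invariant field $\widetilde{\d f}$; contracting $\Omega$ then yields $i_{\widetilde{\d f}}\Omega=\d(\t^*f)=\t^*\d f$. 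The general case follows by $C^\infty(M)$-linearity, using $\widetilde{f\al}=(\t^*f)\,\tilde\al$ and writing $\al$ locally as $\sum_i f_i\,\d g_i$. A conceptual alternative, which also pins down the formula, is to note that $i_{\tilde\al}\Omega$ annihilates $\Ker\d\t$ because the source and target distributions are $\Omega$-orthogonal, so $i_{\tilde\al}\Omega$ is automatically $\t$-horizontal; evaluating along the identity section, where $\eps(M)$ is Lagrangian, then identifies the horizontal form as $\t^*\al$.

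With the lemma in hand, Cartan's formula together with $\d\Omega=0$ gives $\Lie_{\tilde\al}\Omega=\d\,i_{\tilde\al}\Omega=\t^*\d\al$. I would then use $\exp(t\al)=\phi^t_{\tilde\al}\circ\eps$, so that $\frac{\d}{\d t}\exp(t\al)^*\Omega=\eps^*(\phi^t_{\tilde\al})^*\Lie_{\tilde\al}\Omega=\bigl(\t\circ\exp(t\al)\bigr)^*\d\al$. Since $\exp(0\cdot\al)=\eps$ and the identity section is Lagrangian, the initial value is $\exp(0)^*\Omega=0$, and integrating from $t=0$ to $t=1$ expresses $\exp(\al)^*\Omega$ as an integral of $\bigl(\t\circ\exp(t\al)\bigr)^*\d\al$ over the path, where $\t\circ\exp(t\al)=\phi^t_{\rho(\al)}$ by the computation of the target of the exponential recorded above.

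The remaining task, which I expect to be the main obstacle, is to show that this integral collapses to $\d\al$ itself rather than to a flow-averaged expression: one must control the interaction between the flow of $\tilde\al$ and the target map $\t\circ\exp(t\al)$ so that the accumulated terms assemble exactly into $\d\al$. The cleanest way I would guarantee this is to localize near the identity section and transport the computation to the cotangent model via the Lagrangian neighborhood of $\eps(M)$, where $\t\circ\exp(t\al)$ reduces to the identity and the integral is immediately the fundamental identity $\al^*\Omega_\can=\d\al$; naturality then propagates the equality globally. Once $\exp(\al)^*\Omega=\d\al$ is established, the final assertion is immediate: $\exp(\al)$ is a Lagrangian bisection precisely when $\exp(\al)^*\Omega=0$, that is, when $\d\al=0$.
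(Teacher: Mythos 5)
The paper gives no proof of this proposition (it simply refers to \cite{Xu97}), so your argument must stand on its own, and it has a genuine gap at exactly the step you flag as ``the main obstacle''; moreover, that gap cannot be filled, because the unrestricted identity $\exp(\al)^*\Omega=\d\al$ is false for the flow exponential when $\al$ is not closed. Your contraction lemma $i_{\tilde\al}\Omega=\t^*\al$ is correct up to an overall sign fixed by conventions, and your Moser computation is correct; but what it actually proves is the integral formula
\[
\exp(\al)^*\Omega=\int_0^1\bigl(\t\circ\exp(t\al)\bigr)^*\d\al\;\d t=\int_0^1\bigl(\phi^t_{\pi^\sharp(\al)}\bigr)^*\d\al\;\d t,
\]
and this does not collapse. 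Concretely, take $M=\Rr^2$ with $\omega=\d x\wedge\d y$, so that $\G=\Sigma(M)$ is the pair groupoid $M\times\overline{M}$ with $\Omega=\pr_1^*\omega-\pr_2^*\omega$, $\t=\pr_1$, $\s=\pr_2$, and take $\al=x\,\d y$. Right-invariant vector fields here have the form $(u,0)$ with $u\in\X(M)$, and the anchor condition $\d\t(\tilde\al)=\pi^\sharp(\al)$ forces $\tilde\al=(\pi^\sharp(\al),0)=(\pm x\,\partial_x,0)$; hence $\exp(\al)$ is the graph of $(x,y)\mapsto(e^{\pm1}x,y)$ and
\[
\exp(\al)^*\Omega=(e^{\pm1}-1)\,\d x\wedge\d y\;\neq\;\pm\,\d x\wedge\d y=\pm\,\d\al,
\]
whichever sign conventions are used. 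This also pinpoints why your proposed repair fails: no Lagrangian tubular neighborhood of $\eps(M)$ can make $\t\circ\exp(t\al)$ ``reduce to the identity,'' since $\t\circ\exp(t\al)=\phi^t_{\pi^\sharp(\al)}$ genuinely moves points of $M$ whenever $\pi^\sharp(\al)\neq0$. A Lagrangian neighborhood does produce \emph{some} parametrization of bisections near $\eps(M)$ by $1$-forms whose pullback of $\Omega$ equals $\d\al$, but that parametrization is not the groupoid exponential, so ``naturality'' cannot transport the identity back to $\exp$.

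What your integral formula does establish are precisely the instances of the proposition used elsewhere in the paper, together with the substantive half of the final claim: if $\d\al=0$ the integrand vanishes, so $\exp(\al)$ is a Lagrangian bisection; if $\al$ is basic then $\pi^\sharp(\al)=0$, hence $\t\circ\exp(t\al)=\mathrm{id}$ and $\exp(\al)^*\Omega=\d\al$ exactly (this covers the cotangent groupoid, where the anchor vanishes identically); and $\tfrac{\d}{\d t}\big|_{t=0}\exp(t\al)^*\Omega=\d\al$ always holds, which is the infinitesimal statement needed for the Picard Lie algebra. What does not follow---and, by the example above, is simply not true for the flow exponential---is the displayed identity for arbitrary $\al$, and with it the ``only if'' direction of the last sentence as you derive it: $\exp(\al)^*\Omega=0$ no longer visibly implies $\d\al=0$. (This is not a pedantic point: the unrestricted identity is also what is invoked for a non-closed primitive $\al$ in the proof of Corollary~\ref{cor:Picard:Lagrangian:bisections}.) So the verdict is that your key lemma and Moser step are sound, but the final collapsing step is a real gap that no argument can close; the statement survives only after restricting $\al$ (closed, or basic), or after reinterpreting $\exp$ as a tubular-neighborhood exponential rather than the flow exponential defined in the paper.
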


For the proof we refer to \cite{Xu97}. Notice that when $\G=T^*M$
our definition gives $\exp(\al)=\al$ and the proposition reduces to
the fundamental property of $\Omega_\can$.

%%%%%%%%%%%%%%%%%%%%%%%%%%%%%%%%%%%%%%%%%%%%%%%%%%%%%%%%%%%%%%%%%%%%%%%%%%%%
\subsection{Morita Equivalence}
%%%%%%%%%%%%%%%%%%%%%%%%%%%%%%%%%%%%%%%%%%%%%%%%%%%%%%%%%%%%%%%%%%%%%%%%%%%%

We henceforth restrict our attention to integrable Poisson manifolds
$(M,\pi)$ with $\Sigma(M)$ a Hausdorff symplectic Lie groupoid.

Two Poisson manifolds $(M,\pi_M)$ and $(N,\pi_N)$ are said to be
\textbf{Morita equivalent} \cite{Xu91} if there exists a symplectic
manifold $(S,\omega)$ and a two leg diagram
\[
\xymatrix@R=10pt{
 &S \ar[dl]_{p}\ar[dr]^{q}& \\
M& & \overline{N},}
\]
where $p$ and $q$ are complete symplectic realizations with
1-connected fibers such that the sub-bundles tangent to the $p$- and
$q$-fibers are symplectic orthogonal complements of one another
(here, as usual, the bar indicates that we change the sign of the
Poisson bracket.) The orthogonality of the fibers implies, in
particular, that
\begin{equation}
\label{eq:commute}
 \{f\circ p,g\circ q\}_S=0,\quad \forall f\in C^\infty(M),g\in C^\infty(N).
\end{equation}
We shall refer to $S$ as a \textbf{Morita bimodule}.

Two Morita bimodules $M\stackrel{p}{\lmap} S\stackrel{q}{\rmap}
\overline{N}$ and $M\stackrel{p'}{\lmap} S'\stackrel{q'}{\rmap}
\overline{N}$ are said to be \textbf{equivalent} if there is a
symplectic isomorphism $\Phi:(S,\omega)\to (S,\omega')$ which makes
the following diagram commute:
\[
\xymatrix@R=8pt@C=15pt{
& S \ar[dddl]_{p}\ar'[dr][dddrrr]_{q}\ar[rr]^{\Phi}& &S' \ar[dddlll]^{p'}\ar[dddr]^{q'} \\
& & & &\\
\\
M&& & &\overline{N}.}
\]

Note (see Thm.~\ref{thm:MW88}) that a Morita bimodule
$M\stackrel{p}{\lmap} S\stackrel{q}{\rmap} \overline{N}$ gives rise
to left and right symplectic groupoid actions,
\[
\xymatrix{
\Sigma(M)\ar@<3pt>[d]\ar@<-3pt>[d] & S\, \ar@<-8pt>@(ul,dl) \ar[dl]_p \ar[dr]^q \ar@(ur,dr)@<2pt>& \Sigma(\overline{N})\ar@<3pt>[d]\ar@<-3pt>[d]\\
M & & \overline{N}, }
\]
which commute because of \eqref{eq:commute}. Moreover, each action
is principal and the orbit space is determined by the other map, so
$q:S\to N$ (respectively, $p:S\to M$) induces an isomorphism
$S/\Sigma(M)\simeq N$ (respectively, $S/\Sigma(N)\simeq M$).

\begin{examp}[Poisson diffeomorphisms]
\label{ex:Poisson:diffeo}
The symplectic groupoid $\Sigma(M)$ can be viewed as a self Morita equivalence of $M$:
\[
\xymatrix@R=10pt{
 &\Sigma(M) \ar[dl]_{\t}\ar[dr]^{\s}& \\
M& & \overline{M}.}
\]
The corresponding left/right actions are the left/right actions of $\Sigma(M)$ on itself.

More generally, every Poisson diffeomorphism
$\phi:(M,\pi_M)\to(N,\pi_N)$ induces a Morita equivalence
\[
\xymatrix@R=10pt{
 &\Sigma(M) \ar[dl]_{\t}\ar[dr]^{\phi\circ\s}& \\
M& & \overline{N}.}
\]
The left action of $\Sigma(M)$ is still the action by left
translations of $\Sigma(M)$ on itself. For the right action of
$\Sigma(\overline{N})$ on $\Sigma(M)$ one first integrates
$\phi:M\to N$ to a symplectic groupoid isomorphism
$\Phi:\Sigma(M)\to\Sigma(N)$ and then $x\in\Sigma(N)$ acts on
$\Sigma(M)$ by right translation by $\Phi^{-1}(x)$. We will denote
this Morita bimodule by $\Sigma(M)_\phi$.
\end{examp}

\begin{examp}[Gauge transformations]
\label{ex:gauge:transf} Another important class of Morita equivalences is given by gauge equivalences, as observed by
Bursztyn and Radko in \cite{BR03}. Given a Poisson manifold
$(M,\pi)$, we say that a 2-form $B\in\Omega^2(M)$ defines a
\textbf{gauge equivalence} \cite{SeWe} if
\begin{enumerate}[(a)]
\item $\d B=0$ and
\item the bundle map $I+B^\flat\circ\pi^\sharp: T^*M\to T^*M$ is invertible.
\end{enumerate}
For such a 2-form, the bivector $\pi_B\in\X^2(M)$ given by
$(\pi_B)^\sharp=\pi^\sharp\circ(I+B^\flat\circ\pi^\sharp)^{-1}$
defines a new Poisson structure on $M$. The geometric interpretation
of $\pi_B$ is as follows: it has the same foliation as $\pi$ while
the symplectic form on a leaf differs by the restriction of $B$ to
the leaf. Arbitrary gauge transformations, by any closed 2-form, make sense in the more
general context of Dirac structures (see e.g. \cite{BR03,SeWe}), where the notation $\pi_B=e^B\pi$ is justified (see \cite[Sec.~1]{Gua11}).

It was shown in \cite{BR03} that gauge equivalent Poisson structures
$\pi$ and $\pi_B$ are Morita equivalent with Morita bimodule given
by
\[
\xymatrix@R=10pt{
 &(\Sigma(M), \Omega-\s^*B) \ar[dl]_\t\ar[dr]^\s& \\
(M,\pi)& & (M,-\pi_B).}
\]
We will denote this Morita bimodule by $\Sigma(M)_B$.
\end{examp}

Two Morita equivalences $M\lmap S'\rmap \overline{N}$ and $N\lmap
S''\rmap \overline{P}$ can be composed:
\[
\xymatrix@R=10pt{
 &S'*S'' \ar[dl]\ar[dr]& \\
M& & \overline{P},}
\]
where the bimodule $S'*S''$ is defined as the quotient
\[
S'*S'':=\frac{S'\times_N \overline{S''}}{\Sigma(N)} .
\]
The symplectic form on $S'*S''$ is obtained by symplectic reduction.
%(so here the bar indicates, as usual, that we change the sign of the symplectic form).

An important feature of this operation is that it is associative
only up to natural equivalences of Morita bimodules:
\[ (S'*S'')*S'''\simeq S'*(S''*S''').\]
Note also that the symplectic groupoid acts as the unit for this
operation: for any Morita bimodule $M\lmap S\rmap \overline{N}$
there are natural isomorphisms:
\[ \Sigma(M)*S\simeq S,\quad S*\Sigma(N)\simeq S.\]
Moreover, given a Morita bimodule $M\lmap S\rmap \overline{N}$
the inverse Morita bimodule is $N\lmap \overline{S}\rmap
\overline{M}$, in the sense that we have natural isomorphisms
\[ S*\overline{S}\simeq\Sigma(M),\quad \overline{S}*S\simeq\Sigma(N).\]
For a Morita bimodule $S$, we may denote its inverse by $S^{-1}$.

\begin{examp}[Composition of Poisson diffeomorphisms and gauge transformations]
It should be clear that for Poisson diffeomorphisms
$\phi:(M,\pi_M)\to (N,\pi_N)$ and $\psi:(N,\pi_N)\to (P,\pi_P)$ we
have a natural isomorphism:
\[ \Sigma(M)_\phi * \Sigma(N)_\psi\simeq \Sigma(M)_{\psi\circ\phi} \]
Similarly, if $B_1,B_2\in\Omega^2(M)$ are closed 1-forms where $B_1$
determines a gauge equivalence of $\pi$ with $\pi_{B_1}$ and $B_2$
determines a gauge equivalence of $\pi_{B_1}$ with $\pi_{B_1+B_2}$,
then $B_1+B_2$ determines a gauge equivalence of $\pi$ with
$\pi_{B_1+B_2}$ and we have a natural equivalence
\[
\Sigma(M)_{B_1}*\Sigma(M)_{B_2}\simeq \Sigma(M)_{B_1+B_2}.
\]

More general compositions, involving both types of bimodules, will be discussed in Section \ref{sec:subgroups} below.
% more general compositions $\Sigma(M)_{B}*\Sigma(M)_\phi$.
\end{examp}

\begin{rmk}\label{rmk:2grp}
The properties of composition of Morita bimodules described above reflect the fact that one may view (integrable) Poisson manifolds as objects in a category whose invertible morphisms are equivalence classes of Morita bimodules, see e.g. \cite[Sec.~2]{BW04}. More generally (see e.g. \cite{Land}) (integrable) Poisson manifolds may be seen as objects in a {\em bicategory} (a.k.a. a weak 2-category), with invertible 1-morphisms being Morita bimodules and 2-morphisms given by equivalences of bimodules.
\end{rmk}

%%%%%%%%%%%%%%%%%%%%%%%%%%%%%%%%%%%%%%%%%%%%%%%%%%%%%%%%%%%%%%%%%%%%%%%%%%%%
%%%%%%%%%%%%%%%%%%%%%%%%%%%%%%%%%%%%%%%%%%%%%%%%%%%%%%%%%%%%%%%%%%%%%%%%%%%%
\section{The Picard group}\label{sec:picard}
%%%%%%%%%%%%%%%%%%%%%%%%%%%%%%%%%%%%%%%%%%%%%%%%%%%%%%%%%%%%%%%%%%%%%%%%%%%%
%%%%%%%%%%%%%%%%%%%%%%%%%%%%%%%%%%%%%%%%%%%%%%%%%%%%%%%%%%%%%%%%%%%%%%%%%%%%

%%%%%%%%%%%%%%%%%%%%%%%%%%%%%%%%%%%%%%%%%%%%%%%%%%%%%%%%%%%%%%%%%%%%%%%%%%%%
\subsection{Definition and first examples}\label{subsec:defpic}
%%%%%%%%%%%%%%%%%%%%%%%%%%%%%%%%%%%%%%%%%%%%%%%%%%%%%%%%%%%%%%%%%%%%%%%%%%%%
The following definition was first proposed in \cite{BW04}:

\begin{defn}
The \textbf{Picard group} of a Poisson manifold $(M,\pi)$, denoted
by $\Pic(M)$, is its group of self Morita bimodules, modulo
isomorphisms of Morita bimodules.
\end{defn}

Note that Picard groups are the groups of automorphisms of Poisson manifolds regarded as objects in the category of Remark~\ref{rmk:2grp}; if one considers self Morita bimodules, rather than their isomorphism classes, one obtains a
(weak) 2-group (the 2-group of automorphisms of an object in the bicategory of Remark~\ref{rmk:2grp}). We will
not consider 2-categorical aspects of Morita equivalence in this paper, though it would be natural to extend our results in this direction.

We recall some examples of Poisson manifolds whose Picard groups are
known.

\begin{examp}[Symplectic manifolds]
\label{ex:sympl} Let $(M,\pi)$ be a Poisson manifold with a
non-degenerate Poisson structure. This means that $\pi^\sharp$ is an
isomorphism, so $\omega:=\pi^{-1}$ is a symplectic form. Let
$\phi\in\Aut(\pi_1(M))$ be an automorphism of the fundamental group
of $M$ and denote by $\widetilde{M}\to M$ the universal covering
space. The fundamental group $\pi_1(M)$ acts on
$\widetilde{M}\times\widetilde{M}$ by setting
\[ [\gamma]\cdot(m,n):=([\gamma]\cdot m,\phi([\gamma])\cdot n),\]
and we obtain the Morita bimodule
\[
\xymatrix{
 &\frac{\widetilde{M}\times\overline{\widetilde{M}}}{\pi_1(M)} \ar[dl]_{\text{pr}_1}\ar[dr]^{\text{pr}_2}& \\
M& & \overline{M}.}
\]
The trivial bimodule is obtained by taking $\phi$ to be the
identity. More generally, this bimodule is isomorphic to the trivial
bimodule if and only if $\phi$ is a inner automorphism of
$\pi_1(M)$.

It follows that the group of outer automorphisms $\OutAut(\pi_1(M))$
injects in $\Pic(M)$. One can show that, in fact, every self Morita
bimodule is isomorphic to one of this form, so that (see
\cite{BW04}):
\[
\Pic(M)\simeq\OutAut(\pi_1(M)).
\]
\end{examp}

\begin{examp}[Zero Poisson structure]
Let $M$ be any manifold with the zero Poisson structure $\pi=0$.
Recall that the symplectic groupoid of $M$ is
$\Sigma(M)=(T^*M,\Omega_\can)$, with $\s=\t=p$ the projection on the
base, and multiplication being addition on the fibers. One can
obtain self Morita bimodules of $M$ by subtracting from the
canonical symplectic form $\Omega_\can$ any 2-form $p^*B$, with
$B\in\Omega^2(M)$ a closed 2-form, and composing the source with any
diffeomorphism $\phi:M\to M$:
\[
\xymatrix{
 &(T^*M,\Omega_{\can}-p^*B) \ar[dl]_{p}\ar[dr]^{\phi\circ p}& \\
M& & \overline{M}.}
\]
Two such bimodules, induced by pairs $(\phi_1,B_1)$ and
$(\phi_2,B_2)$, are isomorphic if and only if $\phi_1=\phi_2$ and
$B_1-B_2$ is exact. Morover, the product of two such bimodules is
canonically isomorphic to the bimodule associated with the pair
$(\phi_1\circ\phi_2,B_1+\phi_1^*B_2)$.

We conclude that the semidirect product $\Diff(M)\ltimes H^2(M,\Rr)$
is a subgroup of $\Pic(M)$. One can show that, in fact, every self
Morita bimodule is isomorphic to one of this form, so that (see
\cite{BW04})
\[
\Pic(M)\simeq\Diff(M)\ltimes H^2(M,\Rr).
\]
\end{examp}

Picard groups have also been studied for a class of Poisson
structures on surfaces by Radko and Shlyakhtenko in \cite{RS}.

%%%%%%%%%%%%%%%%%%%%%%%%%%%%%%%%%%%%%%%%%%%%%%%%%%%%%%%%%%%%%%%%%%%%%%%%%%%%
\subsection{Bisections}
%%%%%%%%%%%%%%%%%%%%%%%%%%%%%%%%%%%%%%%%%%%%%%%%%%%%%%%%%%%%%%%%%%%%%%%%%%%%

In order to study Picard groups, it is convenient to generalize the
notion of bisection of groupoids: we define a \textbf{bisection of a
self Morita bimodule}
\[ M\stackrel{p}{\lmap} (S,\omega)\stackrel{q}{\rmap} \overline{M} \]
to be an embedded submanifold $L\subset S$ such that the
restrictions of both submersions $p$ and $q$ to $L$ induce
diffeomorphisms $L\to M$. In this case, we can choose an embedding
$b:M\to S$ such that $q\circ b=$id$_M$ and $p\circ b:M\to M$ is a
diffeomorphism. Conversely, the image of any such map is a
bisection, so we will identify bisections with maps $b:M\to S$
satisfying these two conditions. A \textbf{static bisection} is a
bisection $b:M\to S$ such that both $q\circ b=$id$_M$ and $p\circ
b=$id$_M$. A \textbf{Lagrangian bisection} is a bisection $b:M\to S$
such that $b^*\Omega=0$. Note that the trivial bimodule $\Sigma(M)$
always admits a static Lagrangian bisection -- namely, the identity
bisection.

\begin{prop}
\label{prop:group:bisections}
If $S_1$ and $S_2$ both admit bisections, then so does
$S_1*S_2^{-1}$. The elements of $\Pic(M)$ represented by
bimodules admitting a bisection form a normal subgroup. The same holds for Lagrangian (respectively, static)
bisections.
\end{prop}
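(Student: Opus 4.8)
The plan is to derive all three assertions from the explicit construction claimed in the first sentence, reading off the subgroup and normality statements afterwards.

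\textbf{The bisection of $S_1*S_2^{-1}$.} Write the given bisections as maps $b_i\colon M\to S_i$ with $q_i\circ b_i=\mathrm{id}_M$ and $\phi_i:=p_i\circ b_i\in\Diff(M)$. Recall that $S_1*S_2^{-1}$ is the quotient by the diagonal $\Sigma(M)$-action of the fibered product $C:=\{(s_1,s_2)\in S_1\times S_2: q_1(s_1)=q_2(s_2)\}$, with left leg $[s_1,s_2]\mapsto p_1(s_1)$ and right leg $[s_1,s_2]\mapsto p_2(s_2)$. I would set
\[
\beta(m):=\bigl[\,b_1(\phi_2^{-1}(m)),\ b_2(\phi_2^{-1}(m))\,\bigr].
\]
The pair lies in $C$ because $q_1(b_1(n))=n=q_2(b_2(n))$ for $n=\phi_2^{-1}(m)$; its right leg is $p_2(b_2(\phi_2^{-1}(m)))=\phi_2(\phi_2^{-1}(m))=m$, while its left leg is $\phi_1\circ\phi_2^{-1}$, a diffeomorphism. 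Hence $\beta$ is a bisection of $S_1*S_2^{-1}$, and it is automatically an embedding since it is a smooth section of the right-leg submersion.

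\textbf{Subgroup and refinements.} Since $\Sigma(M)$ carries the identity bisection (which is static and Lagrangian) and the group law reads $[S_1][S_2]^{-1}=[S_1*S_2^{-1}]$, the construction shows that the classes admitting a bisection form a subgroup of $\Pic(M)$ via the criterion ``nonempty and closed under $(a,b)\mapsto ab^{-1}$''. The same $\beta$ handles the refined cases once I check it inherits the type. The static case is immediate: $\phi_1=\phi_2=\mathrm{id}$ forces $\phi_1\circ\phi_2^{-1}=\mathrm{id}$. For the Lagrangian case, let $\iota\colon C\hookrightarrow S_1\times S_2$ and $\mathrm{pr}\colon C\to S_1*S_2^{-1}$ be the inclusion and projection; by construction of the reduced form one has $\mathrm{pr}^*\Omega=\iota^*\widehat\Omega$, where $\widehat\Omega$ is the ambient form assembled from $\omega_1$ and $\omega_2$. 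Lifting $\beta$ to $\tilde\beta(m)=(b_1(\phi_2^{-1}(m)),b_2(\phi_2^{-1}(m)))$ and pulling back shows that $\beta^*\Omega$ is the pullback by $\phi_2^{-1}$ of a combination of $b_1^*\omega_1$ and $b_2^*\omega_2$, hence vanishes when $b_1,b_2$ are Lagrangian. Thus the Lagrangian (resp. static) classes form a subgroup as well.

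\textbf{Normality.} This is the part I expect to fight. I must show that if $S$ admits a bisection $b$ (of a given type) and $[T]\in\Pic(M)$ is arbitrary, then $T*S*T^{-1}$ again admits one. A point of the triple product is a class $[t,s,u]$ with $q_T(t)=p_S(s)$, $q_S(s)=q_T(u)$, and right leg $p_T(u)$. The natural candidate is $\gamma(m)=[t,\,b(q_T(u)),\,u]$ with $p_T(u)=m$ and $q_T(t)=\phi(q_T(u))$, where $\phi=p_S\circ b$. \emph{The main obstacle is well-definedness:} because $b$ induces the diffeomorphism $\phi$ of $M$ --- which need not be a Poisson diffeomorphism when $b$ is not Lagrangian --- one cannot simply take $t=u$, and the class must be shown independent of the choices of $t$ and $u$ modulo the two balanced-product quotients. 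I would resolve this by using $b$ to trivialize $S$: since the left $\Sigma(M)$-action is principal with orbit map $q_S$ (so that $S/\Sigma(M)\simeq M$ via $q_S$), the section $b$ writes every $s\in S$ uniquely as $g\cdot b(q_S(s))$, identifying $S$ with a $\phi$-twisted (and, off the Lagrangian locus, gauge-twisted) model of $\Sigma(M)$. Transporting this model through the triple product and invoking the canonical isomorphism $T*T^{-1}\cong\Sigma(M)$ then pins down a canonical bisection $\gamma$, whose static/Lagrangian type is inherited from $b$. Verifying that $\gamma$ descends through both quotients and is compatible with the reduced symplectic form is the technical heart of the argument.
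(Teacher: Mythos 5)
Your first two paragraphs are correct and essentially coincide with the paper's own treatment: the paper constructs the bisection $m\mapsto(b_1(\phi_2(m)),b_2(m))$ of $S_1*S_2$, notes that a bisection of a bimodule is also a bisection of its inverse, and verifies the Lagrangian and static cases by the same pullback computation you give, so this part is fine.

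The normality paragraph, however, contains a genuine gap, and it is not merely the deferred ``technical heart.'' Your candidate $\gamma(m)=[t,\,b(q_T(u)),\,u]$ constrains $t$ only by $q_T(t)=\phi(q_T(u))$, and the hoped-for independence of the choice of $t$ is \emph{false}: two admissible choices lie in the same $q_T$-fiber, hence satisfy $t'=x\cdot t$ for a unique $x\in\Sigma(M)$ acting on the \emph{left}, whereas the balanced-product quotient identifies triples only through the \emph{right} $\Sigma(M)$-action on the first slot; consequently $[x\cdot t,s,u]$ and $[t,s,u]$ are in general distinct classes --- they even have different left legs, since $p_T(x\cdot t)=\t(x)$ need not equal $p_T(t)$. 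So with $t$ left free your map is not well defined, and no amount of checking will make it so; the missing idea is a \emph{canonical} construction of the first slot as a function of $u$ and $m$. This is exactly what the paper's proof supplies: in its notation ($S$ arbitrary, $S_0$ carrying the bisection $b_0$, so your $T$ is its $S$ and your $S$ is its $S_0$), it defines
\[
m\ \longmapsto\ [\,b_0(m)\cdot\overline{u}\cdot b_0(n)^{-1},\ b_0(n),\ u\,]\ \in\ (S\times_M S_0\times_M S^{-1})/(\Sigma(M)\times\Sigma(M)),
\]
where $u\in S^{-1}$ is any point with $p(u)=m$ and $n:=q(u)$, i.e.\ the first entry is $\overline{u}$ with both ends transported along $b_0$; well-definedness is then a short principality computation: if $u'=x\cdot u$, there is a unique $y\in\Sigma(M)$ with $b_0(n')=y\cdot b_0(n)\cdot x^{-1}$, and the two resulting triples differ exactly by the $(\Sigma(M)\times\Sigma(M))$-action. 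Smoothness follows from a local section of $p$, the legs are computed directly, and the static/Lagrangian cases are inherited. Note finally that your fallback strategy cannot be carried out as described: since $S$ sits \emph{between} $T$ and $T^{-1}$ in the product, the isomorphism $T*T^{-1}\cong\Sigma(M)$ is of no use until the ($\phi$- and gauge-twisted) trivialization of $S$ has been moved past $T$, and that transport is precisely the problem the displayed formula solves.
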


\begin{proof}
If $b_1:M\to S_1$ and $b_2:M\to S_2$ are bisections of $S_1$ and
$S_2$, respectively, then the map
\[
 M\to S_1\times_{M} S_2,\ m\mapsto (b_1(\phi_2(m)),b_2(m)),
 \]
where $\phi_2=p_2\circ b_2$, induces a bisection
\[ b_1*b_2:M\to S_1*S_2=(S_1\times_{M} S_2)/\Sigma(M).\]
Clearly, a bisection of a bimodule $S$ is also a bisection of the
inverse $S^{-1}$. One can directly verify that if $b_1$ and $b_2$ are both Lagrangian (respectively, static), then $b_1*b_2$ is also Lagrangian (respectively, static).

Assume now that $S_0$ is a bimodule admitting a bisection $b_0:M\to S_0$. We claim that for any other
bimodule $M\stackrel{p}{\lmap} S\stackrel{q}{\rmap} \overline{M}$, the conjugate bimodule
\[ S* S_0* S^{-1}:= (S\times_M S_0\times_M S^{-1})/(\Sigma(M)\times\Sigma(M) \]
also admits a bisection. Here, the right hand side is the quotient associated with the free and proper right action defined, for $(u,y,u')\in S\times_M S_0\times_M S^{-1}$ and $(x_1,x_2)\in \Sigma(M)\times\Sigma(M)$, by
\[ (u,y,u')\cdot (x_1,x_2):=(u\cdot x_1,x_1^{-1}\cdot y \cdot x_2, x_2^{-1} \cdot u'). \]
It will be convenient to think of the identity map as an involutive automorphism:
\[ S\to S^{-1}, \quad u\mapsto \overline{u},\]
which switches the actions; hence for $u\in S$ and $x,y\in \Sigma(M)$ one has
\[
\overline{x\cdot u\cdot y}=y^{-1}\cdot \overline{u}\cdot x^{-1}.
\]
With this notation, we define
\begin{equation}
\label{eq:bisection}
b:M\to S*S_0*S^{-1}, \quad m\mapsto [b_0(m)\cdot \overline{u} \cdot b_0(n)^{-1}, b_0(n), u],
\end{equation}
where we chose any $u\in S^{-1}$ such that $p(u)=m$ and we set $n:=q(u)$.

We claim that \eqref{eq:bisection} is well-defined, i.e., it is independent of the choice of $u$. To see that, assume that we have $p(u)=m=p(u')$ and set $n:=p(u)$ and $n':=p(u')$. By principality, there exists $x\in\Sigma(M)$ such that $u'=x\cdot u$. Since $x$ is an arrow with target $n'$, by principality there is a unique $y\in\Sigma(M)$ such that
\[ b_0(n')=y\cdot b_0(n)\cdot x^{-1}. \]
But then
\begin{align*}
[b_0(m)\cdot \overline{u'} \cdot b_0(n')^{-1}, b_0(n'), u']&= [b_0(m)\cdot (\overline{u}\cdot x^{-1})\cdot x \cdot b_0(n)^{-1}y, y\cdot b_0(n)\cdot x^{-1}, x\cdot u]\\
&= [b_0(m)\cdot \overline{u} \cdot b_0(n)^{-1}y, y\cdot b_0(n)\cdot x^{-1}, x\cdot u]\\
&=[b_0(m)\cdot \overline{u} \cdot b_0(n), b_0(n), u],
\end{align*}
and this proves that \eqref{eq:bisection} is well-defined. Moreover, by choosing a local section of $p:S^{-1}\to M$ around $u$, we see that \eqref{eq:bisection} is smooth.

Finally, note that the conjugate bimodule
\[ M\stackrel{\hat{p}}{\lmap} S* S_0* S^{-1}\stackrel{\hat{q}}{\rmap} \overline{M} \]
has projections given by
\[ \hat{p}([u,y,v])=p(u),\quad \hat{q}([u,y,v])=p(v),\]
so that
\[ \hat{p}(b(m))=p(b_0(m)\cdot \overline{u} \cdot b_0(n))=p(b_0(m)),\quad \hat{q}(b(m))=p(u)=m,\]
and we conclude that \eqref{eq:bisection} defines a smooth bisection of $S* S_0* S^{-1}$. Moreover, if $b_0$ is a static bisection so is $b$.

 It is a direct verification that if $b_0:M\to S_0$ is a Lagrangian bisection then the bisection $b_0:M\to S* S_0* S^{-1}$ given by \eqref{eq:bisection} is also Lagrangian. So the elements of $\Pic(M)$ represented by bimodules admitting a Lagrangian bisection also form a normal subgroup.
\end{proof}

%*** Note that there are examples of bimodules which do not admit
%bisections,

The problem of deciding if a bimodule admits a bisection is a rather
non-trivial topological problem. We will return to this issue in
Section~\ref{subsec:sympexample}, where examples will be discussed.

%%%%%%%%%%%%%%%%%%%%%%%%%%%%%%%%%%%%%%%%%%%%%%%%%%%%%%%%%%%%%%%%%%%%%%%%%%%%
\subsection{Subgroups of the Picard group}
\label{sec:subgroups}
%%%%%%%%%%%%%%%%%%%%%%%%%%%%%%%%%%%%%%%%%%%%%%%%%%%%%%%%%%%%%%%%%%%%%%%%%%%%

The Picard group of a Poisson manifold has some natural subgroups
with geometric meaning that we describe in this section.

%%%%%%%%%%%%%%%%%%%%%%%%%
\subsubsection*{Outer Poisson automorphisms}
If $\phi:M\to M$ is a Poisson automorphism of $(M,\pi)$ then the
self Morita bimodule $\Sigma(M)_\phi$ (see Example
\ref{ex:Poisson:diffeo}) represents an element of $\Pic(M)$. This
yields a group homomorphism
\[
\Aut(M)\to\Pic(M), \quad \phi\mapsto [\Sigma(M)_\phi].
\]
As observed in \cite{BW04}, the kernel of this homomorphism is
formed by the \textbf{inner Poisson automorphisms}, whose definition
we now recall.

A bisection of $\Sigma(M)\tto M$, viewed as an embedding
$b:M\to\Sigma(M)$ such that $\s\circ b$ is the identity on $M$ and
$\phi:=\t\circ b$ is a diffeomorphism of $M$, determines an inner
automorphism $\Phi_b:\Sigma(M)\to\Sigma(M)$,
\[
\Phi_b(x)=b(\t(x))\cdot x\cdot b(\s(x))^{-1},
\]
which covers the diffeomorphism $\phi$. When $b$ is a Lagrangian
bisection (i.e., $b^*\Omega=0$) the inner automorphism becomes a
symplectomorphism, while $\phi:M\to M$ becomes a Poisson
automorphism. The Poisson diffeomorphisms that are obtained in this
way form the subgroup of inner Poisson automorphisms, denoted by
$\InnAut(M)$.

We conclude that the group of \textbf{outer Poisson automorphisms}
is a subgroup of the Picard group:
\begin{equation}
\label{eq:outer:Poisson}
\OutAut(M):=\frac{\Aut(M)}{\InnAut(M)}=\frac{\Aut(M)}{\LBis(\Sigma(M))}\subset
\Pic(M).
\end{equation}

%%%%%%%%%%%%%%%%%%%%
\subsubsection*{Outer gauge transformations}
Let $B\in\Omega^2(M)$ be a 2-form defining a self gauge equivalence
of $(M,\pi)$, so that $\pi_B=\pi$. This happens precisely when $B$
is a closed, basic 2-form, i.e., when
\[
\d B=0,\quad i_{X_f} B=0, \qquad\forall f\in C^\infty(M).
\]
Such a 2-form defines the self Morita bimodule $\Sigma(M)_B$ (see
Example \ref{ex:gauge:transf}) and hence determines an element of
$\Pic(M)$. This yields a group homomorphism
\[ \Omega_{\cl,\bas}^2(M)\to\Pic(M), \quad B\mapsto \Sigma(M)_B.\]
We will see later in Proposition \ref{prop:trivial:module} that a
form $B\in \Omega_{\cl,\bas}^2(M)$ is in the kernel of this map if
and only if there exists a static bisection $b:M\to\Sigma(M)$ such
that $b^*\Omega=B$. We denote the group of static bisections  by
$\IBis(\Sigma(M))$, and we have a natural map
$\IBis(\Sigma(M))\to \Omega_{\cl,\bas}^2(M)$ given by $b\mapsto b^*\Omega$.

We conclude that the group of \textbf{outer gauge transformations}
is a subgroup of the Picard group:
\begin{equation}
\label{eq:outer:gauge}
\OutGaug(M):=\frac{\Omega_{\cl,\bas}^2(M)}{\IBis(\Sigma(M))}\subset
\Pic(M).
\end{equation}
Notice that if $\alpha\in\Omega_{\bas}^1(M)$ then
$\exp(\al)\in\IBis(\Sigma(M))$. By Proposition
\ref{prop:bisections:form}, the map $\IBis(\Sigma(M))\to
\Omega_{\cl,\bas}^2(M)$ maps $\exp(\al)$ to $\d\alpha$. Hence,
$\d\Omega_{\bas}^1(M)$ is contained in the kernel of
$\Omega_{\cl,\bas}^2(M)\to\Pic(M)$, and we conclude that there is a
group homomorphism from the \textbf{second basic cohomology group},
viewed as an abelian group, to the Picard group:
\[
H^2_{\bas}(M)\to \OutGaug(M)\hookrightarrow \Pic(M).
\]

%%%%%%%%%%%%%%%%%%%%%%%%%%%%%%%%%%%%%%%%%%%
\subsubsection*{Gauge equivalence up to Poisson diffeomorphism}
The subgroups of $\Pic(M)$ that we considered above can be combined
into a larger subgroup, which will play a key role in our study of
the Picard group.

More precisely, we start with the following data:
\begin{enumerate}[(a)]
\item A closed 2-form $B\in\Omega^2(M)$ such that $I+B^\flat\circ\pi^\sharp$ is invertible;
\item A diffeomorphism $\phi$ such that $\phi_*\pi_B=\pi$.
\end{enumerate}
Then the composition of the corresponding Morita bimodules (see
Examples \ref{ex:Poisson:diffeo} and \ref{ex:gauge:transf}),
%\[
%\xymatrix@C=10 pt{
% &(\Sigma(M), \Omega-\s^*B) \ar[dl]_\t\ar[dr]^\s&
%&(\Sigma(M), \Omega+\t^*B-\s^*B) \ar[dl]_{\t}\ar[dr]^{\phi\circ\s} \\
%(M,\pi)& & (M,\pi_B) & & (M,\pi), }
%\]
\[
\xymatrix@C=0 pt{
 &(\Sigma(M), \Omega-\s^*B) \ar[dl]_\t\ar[dr]^\s  \\
(M,\pi)& & (M,-\pi_{B}), }\;
\xymatrix@C=0 pt{
&(\Sigma(M), \Omega+\t^*B-\s^*B) \ar[dl]_{\t}\ar[dr]^{\phi\circ\s} \\
(M,\pi_B) & & (M,-\pi), }
\]
yields a self Morita bimodule of $(M,\pi)$ which one directly checks
to be canonically isomorphic to the bimodule
\begin{equation}\label{eq:phiB}
\xymatrix{
 &(\Sigma(M), \Omega-\s^*B) \ar[dl]_{\t}\ar[dr]^{\phi\circ\s}& \\
(M,\pi)& & (M,-\pi).}
\end{equation}
We will denote this self Morita bimodule by $\Sigma(M)_{(\phi,B)}$.

If we are given two pairs $(\phi_1,B_1)$ and $(\phi_2,B_2)$
satisfying (a) and (b) above, then one verifies that the product
bimodule $\Sigma(M)_{(\phi_1,B_1)}*\Sigma(M)_{(\phi_2,B_2)}$ is
canonically isomorphic to the bimodule
$\Sigma(M)_{(\phi_1\circ\phi_2,B_1+\phi_1^*B_2)}$. Hence, we
introduce the subgroup $\G_{\pi}(M)\subset
\Diff(M)\ltimes\Omega_{\cl}^2(M)$ given by
\[
\G_{\pi}(M):=\{(\phi,B)~|~(I+B^\flat\circ\pi^\sharp)\text{ is
invertible and }\phi_*\pi_B=\pi\}.
\]
and we have the group homomorphism
\begin{equation}
\label{eq:gauge:picard}
\G_{\pi}(M)\to \Pic(M),\ (\phi,B)\mapsto [\Sigma(M)_{(\phi,B)}].
\end{equation}

In the next section we will determine the kernel and the image of
this homomorphism. In particular, we will see that the image is
important for the understanding of $\Pic(M)$.

Note that the composition of the inclusions
\begin{align*}
\Aut(M)&\hookrightarrow \G_{\pi}(M),\ \phi\mapsto (\phi,0),\\
\Omega^2_{\bas,\cl}(M)&\hookrightarrow \G_{\pi}(M),\ B\mapsto (I,B),
\end{align*}
with the homomorphism $\G_{\pi}(M)\to \Pic(M)$ give rise to the two
subgroups $\OutAut(M)$ and $\OutGaug(M)$ of $\Pic(M)$ that we saw
above (see \eqref{eq:outer:Poisson} and \eqref{eq:outer:gauge}).

%%%%%%%%%%%%%%%%%%%%%%%%%%%%%%%%%%%%%%%%%%%%%%%%%%%%%%%%%%%%%%%%%%%%%%%%%%%%
\subsection{The Picard group exact sequence}
%%%%%%%%%%%%%%%%%%%%%%%%%%%%%%%%%%%%%%%%%%%%%%%%%%%%%%%%%%%%%%%%%%%%%%%%%%%%

For the trivial bimodule $\Sigma(M)$ the bisections
form a group $\Bis(\Sigma(M))$. Recall that $\LBis(\Sigma(M))$
denotes the subgroup of Lagrangian bisections, while
$\IBis(\Sigma(M))$ is the subgroup of static bisections; we will
also consider the intersection of these subgroups, denoted by
$\IsoL(\Sigma(M))$.

Note that we have a homomorphism of groups
\begin{equation}
\label{eq:bis:gauge} \Bis(\Sigma(M))\to \G_\pi(M),\ b\mapsto
(\t\circ b, b^*\Omega),
\end{equation}
whose kernel is $\IsoL(\Sigma(M))$ (see also \cite[Sec.~1.6]{AM} for another context in which
this homomorphism arises).
One can put together the group
homomorphisms \eqref{eq:gauge:picard} and \eqref{eq:bis:gauge} into
an exact sequence, which is the main tool to compute Picard groups:

\begin{thm}
\label{thm:main} There is an exact sequence of groups
\begin{equation}
\label{eq:main:exact:seq}
\xymatrix{
1\ar[r] &\IsoL(\Sigma(M))\ar[r]& \Bis(\Sigma(M))\ar[r]&\G_\pi(M)\ar[r]&\Pic(M)}
\end{equation}
such that the image of the last map is the normal subgroup formed by self
Morita bimodules which admit a bisection. In this sequence, the
first arrow is the inclusion, while the second and third arrows are
the maps \eqref{eq:bis:gauge} and \eqref{eq:gauge:picard},
respectively.
\end{thm}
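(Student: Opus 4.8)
The plan is to establish exactness at each of the three middle terms separately and then identify the image of the last map, reserving the reconstruction of a bimodule from a bisection as the main analytic input.

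First I would check that \eqref{eq:bis:gauge}, $b\mapsto(\t\circ b,\,b^*\Omega)$, is a well-defined group homomorphism into $\G_\pi(M)$. Closedness of $b^*\Omega$ is automatic, and the invertibility of $I+(b^*\Omega)^\flat\circ\pi^\sharp$ together with the compatibility $(\t\circ b)_*\pi_{b^*\Omega}=\pi$ should follow from the behaviour of the inner automorphism $\Phi_b(x)=b(\t(x))\cdot x\cdot b(\s(x))^{-1}$ on the multiplicative form $\Omega$ (cf. \eqref{eq:multiplicative}); this is the point where one sees that a general, non-Lagrangian bisection produces precisely a gauge transformation of $\pi$ rather than a genuine Poisson automorphism. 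Exactness at $\Bis(\Sigma(M))$, and hence injectivity of the inclusion of $\IsoL(\Sigma(M))$, is then immediate: the kernel consists of the bisections $b$ with $\t\circ b=\id$ (i.e. static) and $b^*\Omega=0$ (i.e. Lagrangian), which is exactly $\IBis(\Sigma(M))\cap\LBis(\Sigma(M))=\IsoL(\Sigma(M))$.

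Next I would identify the image of \eqref{eq:gauge:picard}. The easy inclusion is that each $\Sigma(M)_{(\phi,B)}$, presented as in \eqref{eq:phiB} with $\hat p=\t$ and $\hat q=\phi\circ\s$, admits the identity section $\eps\colon m\mapsto 1_m$ as a bisection, since $\hat q\circ\eps=\phi$ is a diffeomorphism and $\hat p\circ\eps=\id$. For the converse I would start from an arbitrary self Morita bimodule $M\stackrel{p}{\lmap}(S,\omega)\stackrel{q}{\rmap}\overline M$ with bisection $b$, set $\phi:=p\circ b$ and $\sigma:=b\circ\phi^{-1}$ (a genuine section of the complete symplectic realization $p\colon S\to M$), and reconstruct $S$ using Theorem \ref{thm:canonical:realz}. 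The candidate isomorphism is the Coste--Dazord--Weinstein map $\Psi\colon\Sigma(M)\to S$, $\Psi(x):=x\cdot\sigma(\s(x))$, built from the left $\Sigma(M)$-action and the section $\sigma$; by construction $p\circ\Psi=\t$, the left actions are intertwined, and $\Psi$ carries $\eps$ to $\sigma$. The remaining computation is to read off the diffeomorphism and the gauge $2$-form from $p\circ b$ and $b^*\omega$ and to check that $\Psi^*\omega$ equals $\Omega$ minus the corresponding basic correction, which exhibits $S\cong\Sigma(M)_{(\phi,B)}$ for a pair $(\phi,B)$ that is thereby forced to lie in $\G_\pi(M)$.

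Normality of this image is then not a new computation: it is exactly Proposition \ref{prop:group:bisections}, since the class of bimodules admitting a bisection is closed under products, inverses, and conjugation. Finally, exactness at $\G_\pi(M)$ amounts to the statement that $(\phi,B)$ lies in the kernel of \eqref{eq:gauge:picard} if and only if it lies in the image of \eqref{eq:bis:gauge}: indeed $(\phi,B)\in\ker$ means $\Sigma(M)_{(\phi,B)}$ is isomorphic to the trivial bimodule $\Sigma(M)$, and transporting the identity bisection through such an isomorphism produces a bisection $b$ of $\Sigma(M)$ with $\t\circ b=\phi$ and $b^*\Omega=B$, while the converse uses the inner automorphism $\Phi_b$ of the first step to build the required isomorphism with $\Sigma(M)$. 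The main obstacle I expect is the reconstruction step, namely verifying that $\Psi$ pulls the symplectic form back to $\Omega$ with the precise gauge term determined by $b^*\omega$, and hence that the pair $(\phi,B)$ automatically satisfies the defining conditions of $\G_\pi(M)$; this requires careful bookkeeping of how the failure of $\sigma$ to be Lagrangian enters $\Psi^*\omega$, and it is the only place where the analytic content of Theorem \ref{thm:canonical:realz} and the gauge construction of Example \ref{ex:gauge:transf} are genuinely used.
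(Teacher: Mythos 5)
Your overall architecture matches the paper's: by Proposition \ref{prop:group:bisections} everything reduces to (i) exactness at $\G_\pi(M)$ and (ii) showing every bimodule with a bisection is isomorphic to some $\Sigma(M)_{(\phi,B)}$, and your treatment of (i) is essentially the paper's Proposition \ref{prop:trivial:module}. One slip there: the isomorphism $\Sigma(M)_{(\phi,B)}\to\Sigma(M)$ attached to a bisection $b$ with $\t\circ b=\phi$, $b^*\Omega=B$ is the \emph{right translation} $\Psi(x)=x\cdot b(\s(x))^{-1}$, not the inner automorphism $\Phi_b$: conjugation covers $\phi$ on \emph{both} legs, so it cannot serve as a bimodule isomorphism, which must commute with both moment maps. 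This is fixable, but the map you name is the wrong one.

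The genuine gap is in the reconstruction step (ii). You propose to invoke Theorem \ref{thm:canonical:realz} for $p\colon(S,\omega)\to M$ with the section $\sigma=b\circ\phi^{-1}$, but that theorem requires a \emph{Lagrangian} section, and $\sigma$ is not Lagrangian for $\omega$ (if it were, the bimodule would already lie in the image of $\Aut(M)$). Without that hypothesis your map $\Psi(x)=x\cdot\sigma(\s(x))$ is still a diffeomorphism, by principality, but no theorem computes $\Psi^*\omega$, and it cannot be obtained by ``bookkeeping'' since $\Psi$ is defined through the Mikami--Weinstein path-lifting action. The missing idea, which is the technical heart of the paper's proof (Lemma \ref{lem:symp}), is to gauge-correct the symplectic form on $S$: set $\widetilde{\omega}:=\omega-q^*b^*\omega$, for which $b\circ(p\circ b)^{-1}$ \emph{is} a Lagrangian section; prove that $\widetilde{\omega}$ is still nondegenerate --- this is genuinely nontrivial, established by showing $\Ker\widetilde{\omega}_{b(m)}\subset\Ker\d_{b(m)}p$ and invoking \cite[Lemma~2.12]{BR03} --- and that $p\colon(S,\widetilde{\omega})\to M$ remains a complete symplectic realization; only then does Theorem \ref{thm:canonical:realz} apply, after which $\omega=\widetilde{\omega}+q^*b^*\omega$ transports to $\Omega-\s^*B$. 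Note also that the nondegeneracy of $\widetilde{\omega}$ is precisely what forces $(\phi,B)$ to satisfy the defining conditions of $\G_\pi(M)$ (invertibility of $I+B^\flat\circ\pi^\sharp$ and $\phi_*\pi_B=\pi$); you assert this membership is ``forced'' by the computation, but it is the content of the lemma you are missing, not a byproduct of it.
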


The exact sequence \eqref{eq:main:exact:seq} has two interesting
exact subsequences corresponding, respectively, to the cases $B=0$
and $\phi=$id in \eqref{eq:bis:gauge}. The following sequence already
appears in \cite{BW04}:

\begin{cor}
The exact sequence \eqref{eq:main:exact:seq} has an exact
subsequence
\begin{equation}
\label{eq:Diff:exact:seq} \xymatrix@C24pt{ 1\ar[r]
&\IsoL(\Sigma(M))\ar[r]&
\LBis(\Sigma(M))\ar[r]&\Aut(M)\ar[r]&\Pic(M),}
\end{equation}
%where $\LBis(\Sigma(M))\subset \Bis(\Sigma(M))$ denotes the subgroup
%formed by the Lagrangian bisections.
so that that the image of the last map is the normal subgroup of $\Pic(M)$
formed by the self Morita bimodules which admit a Lagrangian
bisection, and it coincides with the group of outer automorphisms
$\OutAut(M)$ given by \eqref{eq:outer:Poisson}.
\end{cor}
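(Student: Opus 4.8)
The plan is to deduce the corollary from Theorem~\ref{thm:main} by restricting its two middle maps to the locus $B=0$. First I would observe that, since \eqref{eq:bis:gauge} takes values in $\G_\pi(M)$ and a Lagrangian bisection satisfies $b^*\Omega=0$, the image $(\t\circ b,0)$ of such a $b$ automatically obeys $(\t\circ b)_*\pi=\pi$; hence $\t\circ b\in\Aut(M)$ and \eqref{eq:bis:gauge} restricts to a homomorphism $\LBis(\Sigma(M))\to\Aut(M)$. Its kernel is the set of Lagrangian bisections with $\t\circ b=\mathrm{id}$, i.e.\ the static Lagrangian bisections $\IsoL(\Sigma(M))$. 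This immediately gives exactness at $\IsoL(\Sigma(M))$ (the inclusion is injective) and at $\LBis(\Sigma(M))$.

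The one nontrivial exactness assertion is at $\Aut(M)$, and I would derive it straight from exactness of \eqref{eq:main:exact:seq} at $\G_\pi(M)$. If $\phi\in\Aut(M)$ maps to $1\in\Pic(M)$, then $(\phi,0)\in\ker(\G_\pi(M)\to\Pic(M))$, so by Theorem~\ref{thm:main} there is $b\in\Bis(\Sigma(M))$ with $(\t\circ b,b^*\Omega)=(\phi,0)$. The vanishing second component forces $b^*\Omega=0$, so $b\in\LBis(\Sigma(M))$ and $\phi=\t\circ b$ lies in the image of $\LBis(\Sigma(M))\to\Aut(M)$. The reverse inclusion is automatic, since any such $(\phi,0)$ is in the image of \eqref{eq:bis:gauge}, hence in $\ker(\G_\pi(M)\to\Pic(M))$.

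Next I would identify the image of $\Aut(M)\to\Pic(M)$ with the classes admitting a Lagrangian bisection. For one inclusion, $\Sigma(M)_\phi=(\Sigma(M),\Omega)$ with legs $\t,\phi\circ\s$ carries the Lagrangian bisection $\eps\circ\phi^{-1}$, since the unit section is Lagrangian ($\eps^*\Omega=0$); as the Lagrangian classes form a subgroup (Proposition~\ref{prop:group:bisections}), the whole image is contained in them. For the reverse, let $[S]$ admit a Lagrangian bisection. A Lagrangian bisection is in particular a bisection, so Theorem~\ref{thm:main} represents $[S]=[\Sigma(M)_{(\phi,B)}]$; transporting the bisection through the bimodule isomorphism (a symplectomorphism intertwining the legs) gives a Lagrangian bisection $\beta$ of $\Sigma(M)_{(\phi,B)}=(\Sigma(M),\Omega-\s^*B)$. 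The bisection conditions read $\s\circ\beta=\phi^{-1}$ and $\beta^*\Omega=(\phi^{-1})^*B$, so reparametrizing by $\tilde\beta:=\beta\circ\phi$ produces an honest bisection of $\Sigma(M)$ with $\s\circ\tilde\beta=\mathrm{id}$, $\t\circ\tilde\beta=:\psi$ and $\tilde\beta^*\Omega=B$. Thus $(\psi,B)$ is in the image of \eqref{eq:bis:gauge}, whence $[\Sigma(M)_{(\psi,B)}]=1$ by Theorem~\ref{thm:main}. Using the group law $(\phi_1,B_1)(\phi_2,B_2)=(\phi_1\phi_2,B_1+\phi_1^*B_2)$ the $B$-terms cancel, $(\psi,B)^{-1}(\phi,B)=(\psi^{-1}\phi,0)$, and therefore
\[
[\Sigma(M)_{(\phi,B)}]=[\Sigma(M)_{(\psi,B)}]^{-1}[\Sigma(M)_{(\phi,B)}]=[\Sigma(M)_{\psi^{-1}\phi}],
\]
which lies in the image of $\Aut(M)$.

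Finally, the coincidence with $\OutAut(M)$ is formal: by exactness at $\Aut(M)$ the image of $\Aut(M)\to\Pic(M)$ is $\Aut(M)$ modulo the image of $\LBis(\Sigma(M))$, i.e.\ $\Aut(M)/\InnAut(M)=\OutAut(M)$ as in \eqref{eq:outer:Poisson}, and its normality follows from Proposition~\ref{prop:group:bisections}. I expect the only genuine work to be the reverse inclusion of the third step — the reparametrization $\tilde\beta=\beta\circ\phi$ together with the algebraic elimination of $B$ via the $\G_\pi(M)$ group law — while every exactness statement collapses to an application of Theorem~\ref{thm:main}.
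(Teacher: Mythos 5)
Your proof is correct, and it reaches the corollary by a route that differs from the paper's in one essential point. The paper offers no separate argument for this corollary: it is meant to fall out of the \emph{proof} of Theorem~\ref{thm:main}, where the key inclusion is immediate — if $S$ carries a Lagrangian bisection $b$, then Proposition~\ref{prop:canonical:form} applied to $b$ itself produces the pair $\phi=(p\circ b)^{-1}$, $B=-\phi^*b^*\omega=0$, so $S\simeq\Sigma(M)_{(\phi,0)}=\Sigma(M)_\phi$ lies in the image of $\Aut(M)$ on the nose. You instead treat Theorem~\ref{thm:main} as a black box: you take an arbitrary representative $[S]=[\Sigma(M)_{(\phi,B)}]$, transport the Lagrangian bisection through the bimodule isomorphism, reparametrize it by $\phi$ to land in the image of \eqref{eq:bis:gauge} as a pair $(\psi,B)$, and then eliminate $B$ by the semidirect-product law $(\psi,B)^{-1}(\phi,B)=(\psi^{-1}\phi,0)$ in $\G_\pi(M)$, concluding $[\Sigma(M)_{(\phi,B)}]=[\Sigma(M)_{\psi^{-1}\phi}]$. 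Both arguments are sound (your computations $\s\circ\beta=\phi^{-1}$, $\beta^*\Omega=(\phi^{-1})^*B$, $\tilde\beta^*\Omega=B$ all check out); the paper's implicit route is a one-liner but requires reopening the proof of the theorem, whereas yours is longer but has the genuine virtue of deducing the corollary purely from the \emph{statement} of Theorem~\ref{thm:main} together with facts already recorded in Section~\ref{sec:subgroups} (the group law in $\G_\pi(M)$ and the homomorphism property of \eqref{eq:gauge:picard}). The remaining parts of your argument — exactness at $\IsoL(\Sigma(M))$, $\LBis(\Sigma(M))$ and $\Aut(M)$ by restricting to $B=0$, the Lagrangian bisection $\eps\circ\phi^{-1}$ of $\Sigma(M)_\phi$, normality via Proposition~\ref{prop:group:bisections}, and the identification of the image with $\Aut(M)/\InnAut(M)=\OutAut(M)$ — coincide with what the paper intends.
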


\begin{cor}
The exact sequence \eqref{eq:main:exact:seq} has an exact
subsequence
\begin{equation}
\label{eq:Gauge:exact:seq} \xymatrix@C20pt{ 1\ar[r]
&\IsoL(\Sigma(M))\ar[r]&
\IBis(\Sigma(M))\ar[r]&\Omega^2_{\cl,\bas}(M)\ar[r]&\Pic(M),}
\end{equation}
%where $\IBis(\Sigma(M))\subset \Bis(\Sigma(M))$ denotes the subgroup
%formed by the static bisections of $\Sigma(M)$.
so that the image of the last map is the normal subgroup of $\Pic(M)$
formed by the self Morita bimodules which admit a static bisection,
and it coincides with the group of outer gauge transformations
$\OutGaug(M)$ given by \eqref{eq:outer:gauge}.
\end{cor}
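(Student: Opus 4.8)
The plan is to obtain this corollary by restricting the main exact sequence \eqref{eq:main:exact:seq} along the subgroup inclusion $\Omega^2_{\cl,\bas}(M)\hookrightarrow\G_\pi(M)$, $B\mapsto(\mathrm{id},B)$, in exactly the same way that the first corollary is the restriction along $\Aut(M)\hookrightarrow\G_\pi(M)$, $\phi\mapsto(\phi,0)$. The only input specific to the present case is the identification of the relevant subgroup of $\Bis(\Sigma(M))$; everything else then follows formally from Theorem~\ref{thm:main}.

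Write $f\colon\Bis(\Sigma(M))\to\G_\pi(M)$, $b\mapsto(\t\circ b,b^*\Omega)$, for the map \eqref{eq:bis:gauge}, and $g\colon\G_\pi(M)\to\Pic(M)$ for \eqref{eq:gauge:picard}. First I would show that $f^{-1}\big(\Omega^2_{\cl,\bas}(M)\big)=\IBis(\Sigma(M))$. The first component of $f(b)$ is $\t\circ b$, which equals $\mathrm{id}$ precisely when $b$ is static; and once $\t\circ b=\mathrm{id}$, the fact that $f(b)\in\G_\pi(M)$ already forces $\pi_{b^*\Omega}=\pi$, i.e. $b^*\Omega$ is closed and basic, so $f(b)$ lands in $\Omega^2_{\cl,\bas}(M)$ automatically. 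Hence $f$ restricts to a homomorphism $\IBis(\Sigma(M))\to\Omega^2_{\cl,\bas}(M)$, $b\mapsto b^*\Omega$, which is precisely the map entering \eqref{eq:outer:gauge}, and whose composition with $g$ is $B\mapsto[\Sigma(M)_B]$.

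Next I would use the general fact that restricting the sequence to the subgroup $H':=\Omega^2_{\cl,\bas}(M)\subseteq\G_\pi(M)$ and to its $f$-preimage $f^{-1}(H')=\IBis(\Sigma(M))$ preserves exactness: the only nonformal point is the identity $\Img\big(f|_{\IBis(\Sigma(M))}\big)=\Img f\cap H'$, after which $\Img f\cap H'=\Ker g\cap H'$ by exactness of \eqref{eq:main:exact:seq} at $\G_\pi(M)$. This yields exactness of \eqref{eq:Gauge:exact:seq}, the far-left kernel being $\Ker f\cap\IBis(\Sigma(M))=\IsoL(\Sigma(M))$, since $\IsoL(\Sigma(M))=\LBis(\Sigma(M))\cap\IBis(\Sigma(M))\subseteq\IBis(\Sigma(M))$.

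It remains to describe the image of $g$ on $\Omega^2_{\cl,\bas}(M)$. For one inclusion, each $\Sigma(M)_B=\Sigma(M)_{(\mathrm{id},B)}$ carries the unit section $\eps$ as a static bisection, since its legs are $p=\t$, $q=\s$ and $\t\circ\eps=\s\circ\eps=\mathrm{id}$. For the converse, I would invoke the surjectivity part of Theorem~\ref{thm:main}: any bimodule $S$ carrying a bisection $b$ is isomorphic to some $\Sigma(M)_{(\phi,B)}$ whose diffeomorphism component is governed by $p\circ b$, so a static bisection ($p\circ b=\mathrm{id}$) forces $\phi=\mathrm{id}$ and hence $[S]=[\Sigma(M)_B]$ with $B\in\Omega^2_{\cl,\bas}(M)$. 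This identifies the image with the set of classes admitting a static bisection, which is normal by Proposition~\ref{prop:group:bisections}; combining the first isomorphism theorem with the exactness just established then identifies it with $\Omega^2_{\cl,\bas}(M)/\IBis(\Sigma(M))=\OutGaug(M)$ of \eqref{eq:outer:gauge}. The \emph{main obstacle} is this converse inclusion: unlike the rest of the argument, it does not follow from exact-sequence bookkeeping but genuinely rests on the reconstruction behind Theorem~\ref{thm:main}, recovering the gauge form and the identification $S\cong\Sigma(M)_{(\phi,B)}$ from a static bisection and checking that its diffeomorphism part is the identity.
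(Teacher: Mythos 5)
Your proposal is correct and follows essentially the same route as the paper: the paper treats this corollary as an immediate restriction of Theorem \ref{thm:main} to the subgroup $\phi=\mathrm{id}$, with Proposition \ref{prop:trivial:module} giving exactness at $\Omega^2_{\cl,\bas}(M)$ (a form $B$ is killed iff it equals $b^*\Omega$ for a static bisection $b$), Proposition \ref{prop:canonical:form} giving the image characterization (a static bisection forces $\phi=(p\circ b)^{-1}=\mathrm{id}$), and Proposition \ref{prop:group:bisections} giving normality. Your explicit bookkeeping of the restriction argument matches the paper's intended (unwritten) proof.
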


\begin{proof}[Proof of Theorem \ref{thm:main}] By Proposition \ref{prop:group:bisections} all that remains to be shown is:
\begin{enumerate}[(a)]
\item the sequence \eqref{eq:main:exact:seq} is exact at $\G_\pi(M)$, and
\item the image of the last map is the subgroup defined by self Morita bimodules which admit a bisection.
\end{enumerate}

The proofs of these two statements are given in the next two
propositions.

\begin{prop}
\label{prop:trivial:module} The self Morita bimodule
$\Sigma(M)_{(\phi,B)}$ is isomorphic to the trivial bimodule if and
only if there exists a bisection $b\in\Bis(\Sigma(M))$ such that
\begin{equation}
\label{eq:trivial:gauge} \phi=\t\circ b\text{ \, and \,
}b^*\Omega=B.
\end{equation}
\end{prop}

\begin{proof}
Let $\Psi:\Sigma(M)_{(\phi,B)}\to \Sigma(M)$ be an isomorphism to
the trivial bimodule:
\[
\xymatrix{
(\Sigma(M),\Omega-\s^*B)\ar@<3pt>[d]^{\phi\circ\s}\ar@<-3pt>[d]_{\t} \ar[rr]^{\Psi}&& (\Sigma(M),\Omega)\ar@<3pt>[d]^{\s}\ar@<-3pt>[d]_{\t}\\
M\ar@{=}[rr]&& M
}
\]
(by definition, $\Psi$ covers the identity). Recall that the
identity bisection $\eps:M\to\Sigma(M)$ is Lagrangian. Then if we
define $b:M\to\Sigma(M)$ by $b:=\Psi^{-1}\circ \eps\circ\phi$, we
check immediately that
\[
\s\circ b=\text{id}_M,\quad \t\circ b=\phi\text{\; and \,
}b^*\Omega=B,
\]
which shows that $b\in\Bis(\Sigma(M))$ is a bisection for which
\eqref{eq:trivial:gauge} holds.

Conversely, given a bimodule $\Sigma(M)_{(\phi,B)}$ for which there
exists $b\in\Bis(\Sigma(M))$ such that \eqref{eq:trivial:gauge} is
satisfied, we obtain an isomorphism $\Psi:\Sigma(M)_{(\phi,B)}\to
\Sigma(M)$ by taking right translation by $b^{-1}$:
\[ \Psi(x):=x\cdot b(\s(x))^{-1}. \]
In fact, one directly checks that $\t\circ\Psi=\t$ and
$\s\circ\Psi=\phi\circ\s$. If we define
\[ \Delta:\Sigma(M)\to\Sigma(M)\times\Sigma(M),\quad x\mapsto (x,\iota(b(\s(x)))),\]
then we can write $\Psi=m\circ\Delta$ (here $m$ is the groupoid
multiplication and $\iota$ is the inversion). The multiplicative
property \eqref{eq:multiplicative} of the symplectic form $\Omega$
and the fact that the inversion map is anti-symplectic imply that
\begin{align*}
\Psi^*\Omega&=\Delta^*m^*\Omega\\
            &=\Delta^*(\pr_1^*\Omega+\pr_2^*\Omega)\\
            &=\Omega+\s^*b^*\iota^*\Omega\\
            &=\Omega-\s^*b^*\Omega=\Omega-\s^*B.
\end{align*}
So $\Psi$ is indeed a symplectomorphism.
\end{proof}

Clearly, self Morita bimodules of the form $\Sigma(M)_{(\phi,B)}$
admit bisections. To complete the proof of Theorem \ref{thm:main},
it remains to show the following:

\begin{prop}
\label{prop:canonical:form}
Let
\[ M\stackrel{p}{\lmap} (S,\omega)\stackrel{q}{\rmap} \overline{M} \]
be a self Morita bimodule which admits a bisection $b:M\to S$. If we
set $\phi:=(p\circ b)^{-1}$ and $B:=-\phi^*b^*\omega$, then there
exists a unique isomorphism of Morita bimodules $\Psi:S\to
\Sigma(M)_{(\phi,B)}$.
\end{prop}

For the proof of this proposition, we need the following lemma.
Consider the closed 2-form
$\widetilde{\omega}:=\omega-q^*b^*\omega$.

\begin{lem}
\label{lem:symp} $\widetilde{\omega}$ is a symplectic form and
$p:(S,\widetilde{\omega})\to (M,\pi)$ is a complete symplectic
realization.
\end{lem}

\begin{proof}%[Proof of Lemma \ref{lem:symp}]
To check that $\widetilde{\omega}$ is symplectic, we observe that it
suffices to verify that  $\widetilde{\omega}_{b(m)}$ is
non-degenerate for all $m\in M$. Indeed, if this is the case, then
the (pointwise) pushforward of the bivector
$(\widetilde{\omega}_{b(m)})^{-1}$ agrees with $(\pi_B)_m$, the
gauge transformation of $\pi$ by the closed 2-form $B=-b^*\omega$ at
$m$, see \cite[Lemma~2.12]{BR03}; in particular, $B$ defines a gauge
equivalence of $\pi$ (as in Example~\ref{ex:gauge:transf}), and this
guarantees that $\widetilde{\omega}$ is nondegenerate everywhere,
again as a consequence of \cite[Lemma~2.12]{BR03}.

To prove the nondegeneracy of $\widetilde{\omega}_{b(m)}$, we first
claim that $\Ker\widetilde{\omega}_{b(m)}\subset\Ker\d_{b(m)}p$. In
fact, if $v\in\Ker\widetilde{\omega}_{b(m)}$ we have for any
$w\in\Ker\d_{b(m)}q$:
\begin{align*}
\omega_{b(m)}(v,w)&=\widetilde{\omega}_{b(m)}(v,w)+(q^*b^*\omega)_{b(m)}(v,w)\\
                                  &=0+\omega_{b(m)}(\d_m b\cdot\d_{b(m)}q\cdot v,\d_m b\cdot\d_{b(m)}q\cdot w)=0.
\end{align*}
This means that $v$ belongs to
$(\Ker\d_{b(m)}q)^{\perp_\omega}=\Ker\d_{b(m)}p$, as claimed. Next,
we observe that $b$ is an isotropic section for
$\widetilde{\omega}$:
\begin{align*}
b^*\widetilde{\omega}&=b^*\omega-b^*q^*b^*\omega\\
                                         &=b^*\omega-(b\circ q\circ b)^*\omega\\
                                         &=b^*\omega-b^*\omega=0,
\end{align*}
where we use that $q\circ b=$id$_M$. Observing that the dimension of
the section $b$ is $\dim M=\frac{1}{2}\dim S$, and that the section
$b$ is transverse to the $p$-fibers, we conclude that $b$ is a
Lagrangian section for $\widetilde{\omega}$ and that
$\Ker\widetilde{\omega}_{b(m)}=0$, for all $m\in M$.

To finish the proof of the lemma, we have to show that
$p:(S,\widetilde{\omega})\to (M,\pi)$ is a complete Poisson map.
This follows from the fact that for any $f:M\to\Rr$ the hamiltonian
vector fields for $f\circ p$ relative to $\omega$ and relative to
$\widetilde{\omega}$ coincide and the fact that $p:(S,\omega)\to
(M,\pi)$ is already a complete Poisson map.
\end{proof}

\begin{proof}[Proof of Proposition~\ref{prop:canonical:form}]
Note that $b\circ\phi:M\to S$ is a Lagrangian section of the complete symplectic realization
$p:(S,\widetilde{\omega})\to (M,\pi)$. By Theorem
\ref{thm:canonical:realz}, we have an isomorphism of symplectic
realizations,
\[
\xymatrix{
(S,\tilde{\omega})\ar[d]_{p} \ar[rr]^{\Psi}&& (\Sigma(M),\Omega)\ar[d]_{\t}\\
M\ar@{=}[rr]&& M, }
\]
which maps the Lagrangian bisection $b\circ\phi:M\to S$ to the
identity bisection $\eps:M\to\Sigma(M)$.

By the very definition of a self Morita bimodule, the $q$-fibers are
$\omega$-symplectic orthogonal to the $p$-fibers, and it follows
that they are also $\widetilde{\omega}$-symplectic orthogonal. On
the other hand, the source and target fibers are $\Omega$-symplectic
orthogonal. Since $\Psi:(S,\widetilde{\omega})\to
(\Sigma(M),\Omega)$ is a symplectomorphism, we conclude that there
must exist a diffeomorphism $\psi:M\to M$ such that $q=\psi\circ\s\circ\Psi$.
However, since $\Psi$ maps the bisection $b\circ\phi:M\to S$ to the
bisection $\eps:M\to\Sigma(M)$, we find that:
\begin{align*}
\phi&=q\circ b\circ\phi\\
    &=\psi\circ\s\circ\Psi\circ b\circ\phi\\
    &=\psi\circ\s\circ\eps=\psi.
\end{align*}
Finally, we observe that:
\begin{align*}
\Psi^*(\Omega-\s^*B)&=\Psi^*\Omega-(\s\circ\Psi)^*B\\
                                         &=\widetilde{\omega}+(\phi\circ\s\circ\Psi)^*b^*\omega\\
                                         &=\widetilde{\omega}+q^*b^*\omega=\omega.
\end{align*}
This shows that we have an isomorphism of self-Morita bimodules:
\[
\xymatrix{
(S,\omega)\ar@<3pt>[d]^{q}\ar@<-3pt>[d]_{p} \ar[rr]^{\Psi}&& (\Sigma(M),\Omega-\s^*B)\ar@<3pt>[d]^{\phi\circ\s}\ar@<-3pt>[d]_{\t}\\
M\ar@{=}[rr]&& M
}
\]
as claimed.
\end{proof}

This concludes the proof of Theorem \ref{thm:main}.
\end{proof}

%%%%%%%%%%%%%%%%%%%%%%%%%%%%%%%%%%%%%%%%%%%%%%%%%%%%%%%%%%%%%%%%%%%%%%%%%%%%
\subsection{$\Pic(M)$ as a topological group}
\label{sec:top:group}
%%%%%%%%%%%%%%%%%%%%%%%%%%%%%%%%%%%%%%%%%%%%%%%%%%%%%%%%%%%%%%%%%%%%%%%%%%%%

In order to introduce a natural topology on the Picard group,
%We introduce a natural \textbf{topology on the Picard group} as
%follows. One starts by
we start by introducing a topology on the space of self Morita
bimodules.

We will use the Whitney $C^\infty$-topology on the space $C^\infty(M,N)$ of smooth maps between two smooth manifolds (see
\cite{GG86,Hir94}). We recall that for a (possibly non-compact) manifold $M$ a sequence $\{\phi_n\}\subset C^\infty(M,N)$ converges to $\phi$
in the Whitney $C^k$-topology if there is a compact set $K\subset M$ such that the $k$-th jets $j^k\phi_n$ converge uniformly to  $j^k\phi$ on $K$ and all but a finite number of $\phi_n$'s equal $\phi$ outside $K$.

A neighborhood of a Morita bimodule $M\stackrel{p}{\lmap}
(S,\omega)\stackrel{q}{\rmap} \overline{M}$ consists of all bimodules
$M\stackrel{p'}{\lmap} (S',\omega')\stackrel{q}{\rmap}
\overline{M}$, where:
\begin{itemize}
\item The manifolds $S$ and $S'$ coincide;
\item The symplectic forms $\omega'$ belong to a neighborhood of $\omega$ in the
Whitney $C^\infty$-topology;
\item The submersions $(p',q')$ belong to a neighborhood of $(p,q):S\to M\times M$ in the
Whitney $C^\infty$-topology.
\end{itemize}

We endow the Picard group with the quotient topology induced from self-Morita bimodules.

Recall from Theorem \ref{thm:main} that the image of the group
homomorphism
\[
\G_\pi(M)\to \Pic(M),\ (B,\phi)\mapsto [\Sigma(M)_{(B,\phi)}]
\]
is the subgroup of $\Pic(M)$ given by the self Morita bimodules
which admit a bisection. On $\G_\pi(M)$ we consider the subspace
topology induced from the Whitney $C^\infty$-topology on the space
$\Omega^2(M)\times\Diff(M)$, with respect to which $\G_\pi(M)$ is a
topological group (see e.g. \cite{Mil83}).

Let $\Pic(M)^0$ denote the connected component of the identity of $\Pic(M)$.

\begin{prop}
\label{prop:topology:bisections} The map $\G_\pi(M)\to \Pic(M)$ is
continuous and open. Its image, the subgroup of $\Pic(M)$ formed by
the self Morita bimodules which admit a bisection, is an open subset
and a topological group containing $\Pic(M)^0$.
\end{prop}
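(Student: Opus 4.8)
The plan is to treat the homomorphism $f\colon\G_\pi(M)\to\Pic(M)$ of \eqref{eq:gauge:picard}, $(\phi,B)\mapsto[\Sigma(M)_{(\phi,B)}]$, by establishing in turn its continuity and its openness, and then reading off the three assertions about its image from these two facts together with Theorem~\ref{thm:main}. Continuity is immediate from the description of the topology on bimodules: by \eqref{eq:phiB} the bimodule $\Sigma(M)_{(\phi,B)}$ always has the \emph{fixed} underlying manifold $\Sigma(M)$, form $\Omega-\s^*B$, and legs $(\t,\phi\circ\s)$; since $\s,\t$ are fixed, the assignments $B\mapsto\Omega-\s^*B$ and $\phi\mapsto(\t,\phi\circ\s)$ are continuous for the Whitney $C^\infty$-topology, so $f$ is continuous into the space of self Morita bimodules, and hence into $\Pic(M)$.

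The heart of the argument is openness. Recall that $V\subseteq\Pic(M)$ is open precisely when $\{S:[S]\in V\}$ is open in the space of self Morita bimodules. Fix open $U\subseteq\G_\pi(M)$ and a bimodule $S$ with $[S]\in f(U)$; then $S$ admits a bisection, which I view as a submanifold $L\subseteq S$. For every bimodule $S'$ sufficiently $C^\infty$-close to $S$ (so $S'=S$ as a manifold, with nearby form $\omega'$ and legs $(p',q')$), the fixed submanifold $L$ remains a bisection of $S'$: transversality to the $p'$- and $q'$-fibres is an open condition, and $p'|_L,q'|_L$ stay diffeomorphisms because diffeomorphisms form an open set in the Whitney topology (the comparison is with the \emph{fixed} diffeomorphisms $p|_L,q|_L$, so no uniformity issue arises). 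Parametrizing $L$ by $b':=(q'|_L)^{-1}$ and applying Proposition~\ref{prop:canonical:form}, I get $S'\cong\Sigma(M)_{(\phi_{S'},B_{S'})}$ with $\phi_{S'}=(p'\circ b')^{-1}$ and $B_{S'}=-\phi_{S'}^{*}(b')^{*}\omega'$, formulas that are continuous in $(p',q',\omega')$; thus $\Theta\colon S'\mapsto(\phi_{S'},B_{S'})$ is a continuous map into $\G_\pi(M)$ with $[S']=f(\Theta(S'))$. Since $f(\Theta(S))=[S]\in f(U)$, we have $\Theta(S)\in U\cdot\Ker f$, where $\Ker f=\{(\t\circ\beta,\beta^{*}\Omega):\beta\in\Bis(\Sigma(M))\}$ by Proposition~\ref{prop:trivial:module}. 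The set $U\cdot\Ker f$ is open, being a union of right translates of $U$ in the topological group $\G_\pi(M)$; by continuity of $\Theta$ a whole neighbourhood of $S$ is carried into $U\cdot\Ker f$, and every $S'$ there satisfies $[S']=f(\Theta(S'))\in f(U)$. Hence $\{S:[S]\in f(U)\}$ is open and $f$ is open.

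With continuity and openness in hand the remaining assertions are essentially formal. The image is $f(\G_\pi(M))$, which is open because $f$ is open; by Theorem~\ref{thm:main} it is exactly the subgroup of $\Pic(M)$ of classes admitting a bisection. Restricting the codomain to this open image makes $f$ a continuous open surjective homomorphism, so it induces the quotient topology and identifies the image with $\G_\pi(M)/\Ker f$; as a quotient of the topological group $\G_\pi(M)$ by the normal subgroup $\Ker f$, the image is a topological group. To see that it contains $\Pic(M)^{0}$, I argue that this open subgroup is in fact clopen: its complement is a union of cosets $[T]\cdot(\text{image})$, each of which is open once the left translation $L_{[T]}\colon[S]\mapsto[T\ast S]$ is a homeomorphism of $\Pic(M)$. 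Being clopen and containing the identity, the image meets the connected set $\Pic(M)^{0}$ in a nonempty clopen subset, which is therefore all of $\Pic(M)^{0}$.

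The main obstacle is the openness step: one must guarantee that a bisection of $S$ survives, as a fixed submanifold, for all nearby bimodules, and that the pair produced by Proposition~\ref{prop:canonical:form} then depends continuously on the bimodule data; the openness of the kernel-saturation $U\cdot\Ker f$ is what upgrades this local continuous inverse to genuine openness of $f$. A second delicate point, needed only for the clopen-ness in the final step, is the continuity of composition with a fixed bimodule, $[S]\mapsto[T\ast S]$, which is what makes the translations $L_{[T]}$ homeomorphisms; here the explicit symplectic-reduction description of $\ast$ must be shown to vary continuously with $S$, and this is the place where the background compactness assumptions are most likely to intervene.
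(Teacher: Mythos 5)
Your treatment of continuity and openness is correct and follows essentially the same route as the paper, only with more detail. The paper's proof likewise rests on the persistence of a fixed bisection $L\subset S$ for all nearby bimodules (via the openness of diffeomorphisms in the Whitney topology) together with Proposition~\ref{prop:canonical:form}, but it compresses everything after that into the single assertion that ``the definition of the topology shows'' that the classes $[\Sigma(M)_{(\phi,B)}]$ form an open set when $(\phi,B)$ ranges over an open set. Your argument -- restricting the nearby legs to the \emph{fixed} closed submanifold $L$ so that no uniformity issue arises, extracting a continuous map $\Theta$ into $\G_\pi(M)$ from Proposition~\ref{prop:canonical:form}, and then saturating by $\Ker f$ (using $f(u\cdot k)=f(u)$ and openness of $U\cdot\Ker f$ in the topological group $\G_\pi(M)$) -- is a genuine and correct rigorization of that sentence. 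Likewise, identifying the image with $\G_\pi(M)/\Ker f$ via the continuous open surjection $f$, and hence concluding it is a topological group, is the intended argument.

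The gap is in your final step. To get clopen-ness you need each coset $[T]\cdot f(\G_\pi(M))$, for $[T]$ outside the image, to be open, i.e.\ you need the translations $[S]\mapsto[T*S]$ to be homeomorphisms of $\Pic(M)$. This is not available here: openness of such a coset amounts to the statement that every bimodule sufficiently close to one of the form $T*\Sigma(M)_{(\phi,B)}$ is again isomorphic to one of that form, and the paper only proves this later -- in the lemma inside the proof of Theorem~\ref{thm:Picard:topological:group} -- under the hypothesis that $\Sigma(M)$ is compact, using rigidity of compact Lie groupoids; the present proposition carries no compactness assumption. Note that this is not a formality in the given topology: the underlying manifold of $T*S'$, being a quotient of a fiber product taken along the legs of $S'$, changes as $S'$ varies, whereas neighborhoods in the space of bimodules are defined only among bimodules with the \emph{same} underlying manifold. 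You flagged this dependence on compactness yourself, which is to your credit, but as written the containment $\Pic(M)^0\subseteq f(\G_\pi(M))$ is not established. (In fairness, the paper's own proof is silent on this claim as well -- it proves only continuity and openness -- and the claim is not automatic: an open subgroup of a group carrying a topology that is not yet known to make it a topological group need not contain the identity component. So you have put your finger on a real subtlety rather than merely missed a routine step.)
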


\begin{proof}
The set of self Morita bimodules which admit a bisection is open
because the set of diffeomorphisms is an open subset in the space of
all smooth maps, in the Whitney $C^\infty$-topology (see, e.g.,
\cite[Theorem II.1.7]{Hir94}). Hence, the corresponding subgroup in
$\Pic(M)$ is an open subset.  Moreover, up to diffeomorphism,
Proposition \ref{prop:canonical:form} shows that every such bimodule
is isomorphic to one of the form $\Sigma(M)_{(B,\phi)}$. Hence, the
definition of the topology shows that the elements
$[\Sigma(M)_{(B,\phi)}]\in\Pic(M)$ form an open set whenever
$(B,\pi)$ range over an open set in $\Omega^2(M)\times\Diff(M)$.  We
conclude that the map $\G_\pi(M)\to \Pic(M)$ is both continuous and
open.
\end{proof}

We conjecture that $\Pic(M)$ itself is a topological group. We can
prove this assertion when $\Sigma(M)$ is a compact groupoid, in
which case we can rely on recent rigidity results from \cite{CrStMe15,dHF15}. We start
with a preliminary observation.

\begin{lem}\label{lem:compact}
For a Poisson manifold $(M,\pi)$ the following statements are
equivalent:
\begin{enumerate}[(i)]
\item $\Sigma(M)$ is compact;
\item every self Morita bimodule is compact;
\item there exists one self Morita bimodule which is compact.
\end{enumerate}
\end{lem}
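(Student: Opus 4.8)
The plan is to establish the cycle of implications $(i)\Rightarrow(ii)\Rightarrow(iii)\Rightarrow(i)$. The implication $(ii)\Rightarrow(iii)$ requires nothing: since $\Sigma(M)$ is itself a self Morita bimodule (Example~\ref{ex:Poisson:diffeo}), hypothesis $(ii)$ automatically produces a compact one. The content of the lemma therefore lies in the remaining two implications, both of which exploit the fact that compactness of $M$ comes for free, together with the principal bibundle structure of a Morita equivalence.

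For $(iii)\Rightarrow(i)$, I start from a compact self Morita bimodule $M\stackrel{p}{\lmap}(S,\omega)\stackrel{q}{\rmap}\overline{M}$. Since $p$ is a surjective submersion, $M=p(S)$ is a continuous image of a compact space, hence compact. I then realize the trivial bimodule as a quotient of a fiber product,
\[
\Sigma(M)\simeq S*\overline{S}=\frac{S\times_M\overline{S}}{\Sigma(M)}.
\]
The fiber product $S\times_M\overline{S}$ is the preimage of the diagonal $\Delta_M\subset M\times M$ under the pair of legs defining it; as $M$ is Hausdorff, $\Delta_M$ is closed, so $S\times_M\overline{S}$ is a closed, and therefore compact, subset of the compact manifold $S\times\overline{S}$. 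Its image under the quotient map is then compact, giving compactness of $\Sigma(M)$.

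For $(i)\Rightarrow(ii)$, I first note that $M=\s(\Sigma(M))$ is compact, being a continuous image of the compact space $\Sigma(M)$. Given an arbitrary self Morita bimodule $M\stackrel{p}{\lmap}(S,\omega)\stackrel{q}{\rmap}\overline{M}$, I use that the left $\Sigma(M)$-action is free and principal with $q:S\to M$ as its quotient map. Consequently each fiber $q^{-1}(n)$ is a single orbit, and freeness identifies it with a source fiber of $\Sigma(M)$, which is compact by hypothesis. To pass from compact fibers to compactness of $S$ I would show that $q$ is proper: given a sequence $u_k\in S$, compactness of $M$ yields a subsequence with $q(u_k)\to n_0$; choosing a local section $\sigma$ of the submersion $q$ near $n_0$ and writing $u_k=g_k\cdot\sigma(q(u_k))$ for the unique $g_k\in\Sigma(M)$ (possible since the orbit $q^{-1}(q(u_k))$ is swept out freely and transitively from $\sigma(q(u_k))$), compactness of $\Sigma(M)$ provides a convergent subsequence $g_k\to g_0$, and continuity of the action gives $u_k\to g_0\cdot\sigma(n_0)$. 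Hence $S$ is sequentially compact, and thus compact.

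The main obstacle is precisely this last step. For bimodules admitting a bisection, Proposition~\ref{prop:canonical:form} identifies $S$, as a manifold, with $\Sigma(M)$, so that compactness would be automatic; but a general self Morita bimodule need not admit a global section, and its compactness must instead be extracted from the principal $\Sigma(M)$-bundle structure of $q$. The delicate point is to convert the pointwise statement ``the fibers are compact source fibers'' into global properness of $q$, which is exactly where the local-section/transitivity argument together with compactness of $\Sigma(M)$ are essential; an equivalent formulation is that $q$ is a locally trivial fibration with compact fiber over the compact base $M$, so that a finite trivializing cover of $M$ by compact pieces exhibits $S$ as a finite union of compact sets.
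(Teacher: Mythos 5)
Your proof is correct, and it is worth comparing with the paper's, which consists of a single sentence: for any self Morita bimodule $M\stackrel{p}{\lmap} S\stackrel{q}{\rmap}\overline{M}$, the fibers of $p$ and $q$ are diffeomorphic to the source-fibers of $\Sigma(M)$ (via the free, fiberwise transitive groupoid actions). That observation is exactly your identification of $q$-fibers with source fibers, but the paper stops there, leaving implicit the passage from ``compact fibers over a compact base'' to ``compact total space'' in both directions. You are right to flag this as the delicate point: for a general surjective submersion that implication is \emph{false} (there are submersions with all fibers compact, over a compact base, with non-compact total space, once fibers are allowed to be disconnected), so some extra input is needed --- either the connectedness of the fibers (here they are even $1$-connected, and a submersion with compact connected fibers is proper), or, as you do, the principality of the $\Sigma(M)$-action, which via a local section of $q$ exhibits $q$ as a locally trivial fibration pulled back from the source fibration of $\Sigma(M)$, hence proper when $\Sigma(M)$ is compact. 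Your argument for (iii)$\Rightarrow$(i) is also genuinely different from the paper's implicit one: instead of arguing that compactness of the $p$-fibers forces compactness of the source fibers of $\Sigma(M)$ (which would again require a properness step for the source map of $\Sigma(M)$), you use the composition isomorphism $S*\overline{S}\simeq\Sigma(M)$ to realize $\Sigma(M)$ as a continuous image of the compact fiber product $S\times_M \overline{S}$, which bypasses all properness considerations. In short: the paper's proof is maximally economical but relies on unstated facts about submersions with compact (connected) fibers; your version is self-contained, supplies the missing properness argument where it is needed, and replaces it by a softer quotient argument where it is not.
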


\begin{proof}
For a Morita bimodule $M\stackrel{p}{\lmap} S\stackrel{q}{\rmap}
\overline{M}$ the fibers of the surjective submersions $p$ and $q$
are diffeomorphic to the source-fibers of $\Sigma(M)$.
\end{proof}

%Now we have

\begin{thm}
\label{thm:Picard:topological:group}
If $\Sigma(M)$ is compact, then $\Pic(M)$ is a topological group.
\end{thm}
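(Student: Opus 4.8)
The plan is to verify the two defining continuity properties of a topological group directly: that inversion and multiplication on $\Pic(M)$ are continuous. Throughout I exploit Lemma~\ref{lem:compact}: since $\Sigma(M)$ is compact, every self Morita bimodule, and hence every manifold arising from a composition, is compact. In particular the Whitney $C^\infty$-topology used to define the topology on bimodules coincides with the ordinary $C^\infty$-topology, and all fiber products and quotients below are compact manifolds. Recall that $\Pic(M)$ carries the quotient topology induced from the space of self Morita bimodules, where a basic neighborhood of $(S,\omega,p,q)$ keeps the underlying manifold $S$ fixed and perturbs $(\omega,p,q)$ in the $C^\infty$-topology.

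Inversion is the easy case. The inverse of the class represented by $M\stackrel{p}{\lmap}(S,\omega)\stackrel{q}{\rmap}\overline{M}$ is represented by $M\stackrel{q}{\lmap}(S,-\omega)\stackrel{p}{\rmap}\overline{M}$; this operation fixes the underlying manifold $S$ and acts by the continuous assignments $\omega\mapsto-\omega$ and $(p,q)\mapsto(q,p)$. It therefore sends basic neighborhoods to basic neighborhoods, so it descends to a continuous (indeed homeomorphic, being an involution) map on $\Pic(M)$.

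The crux is continuity of multiplication $([S_1],[S_2])\mapsto[S_1*S_2]$, where $S_1*S_2=(S_1\times_M\overline{S_2})/\Sigma(M)$. Representing the classes by compact bimodules, I would perturb $(S_1,\omega_1,p_1,q_1)$ and $(S_2,\omega_2,p_2,q_2)$ to nearby $(S_i,\omega_i',p_i',q_i')$ and argue that $S_1'*S_2'$ is $C^\infty$-close to $S_1*S_2$ in two steps. First, the fiber product $S_1\times_M\overline{S_2}$ is cut out inside the fixed manifold $S_1\times S_2$ by matching the two relevant legs over $M$; for $C^\infty$-small perturbations these maps stay transverse, so by stability of transverse fiber products over a compact base the perturbed fiber product is a compact submanifold identified with the original by a near-identity diffeomorphism, and the restricted form $\pr_1^*\omega_1'-\pr_2^*\omega_2'$ is $C^\infty$-close to the original. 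Second, one performs symplectic reduction by the free, proper action of the \emph{fixed} compact groupoid $\Sigma(M)$ given by Theorem~\ref{thm:MW88}; since this action depends continuously on $(\omega_i',p_i',q_i')$, small perturbations keep it free and proper, and the quotient is a compact manifold carrying the two induced submersions and the reduced symplectic form. To identify this quotient with $S_1*S_2$ by a near-identity diffeomorphism carrying all structures to $C^\infty$-close data, I would invoke the rigidity of compact (proper) symplectic groupoids from \cite{CrStMe15,dHF15} to conclude that the perturbed $\Sigma(M)$-action is smoothly conjugate to the original. This places $[S_1'*S_2']$ in the prescribed neighborhood of $[S_1*S_2]$.

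The main obstacle is precisely the second step: controlling the symplectic quotient by $\Sigma(M)$ under perturbation, so that the reduced bimodule varies $C^\infty$-continuously. Without compactness the action need not remain proper and the Whitney topology behaves badly; it is the combination of Lemma~\ref{lem:compact} with the groupoid rigidity results that makes the composition operation continuous. Once inversion and multiplication are both continuous, $\Pic(M)$ is a topological group, completing the proof. (One may streamline the argument by recalling from Proposition~\ref{prop:topology:bisections} that the image $H$ of $\G_\pi(M)$ is an open topological subgroup containing $\Pic(M)^0$: then multiplication is automatically continuous at the identity, and it remains only to check that each left translation $L_{[S_0]}\colon[S]\mapsto[S_0*S]$ is continuous, reducing the work to the single-variable composition $S\mapsto S_0*S$ treated exactly as above.)
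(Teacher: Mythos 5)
Your main argument has a genuine gap at its central step. You invoke the rigidity results of \cite{CrStMe15,dHF15} to conclude that ``the perturbed $\Sigma(M)$-action is smoothly conjugate to the original,'' but those results give rigidity of compact groupoid \emph{structures}: what you get is an abstract isomorphism of the two action groupoids, and such an isomorphism need not commute with the projections to $\Sigma(M)$, hence does not yield a conjugation of actions intertwining the moment maps --- and it says nothing at all about the symplectic forms. Note also that a conjugation carrying ``all structures'' across could not exist: it would say that the perturbed composition is isomorphic to $S_1*S_2$ as a bimodule, which is false in general (perturbing $S_1$ within its neighborhood can change its class in $\Pic(M)$). What is true, and what must actually be proven, is that the nearby bimodule is isomorphic to the original one composed with a small twist $\Sigma(M)_{(B,\phi)}$. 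This is exactly where the paper does the work you have skipped: after applying groupoid rigidity to the action groupoids $\Sigma(M)\ltimes S'$ and $\Sigma(M)\ltimes S$ of two nearby bimodules to get a diffeomorphism $\Phi:S'\to S$ matching orbits, it compares the symplectic data by hand, using the moment map identity $i_{X_\al}\omega=p^*\al$ to show that $\omega-(\Phi^{-1})^*\omega'$ is killed by the action generators, hence equals $q^*B$ for a closed $2$-form $B$, while the second legs differ by a diffeomorphism $\phi$ of $M$; the conclusion is $S'\simeq S*\Sigma(M)_{(B,\phi)}$, i.e., neighborhoods of $[S]$ are translates of neighborhoods of the identity.

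There is also a structural difference that creates a secondary (fixable) gap. The paper never attempts joint continuity of multiplication directly: by Bourbaki's criterion, since Proposition~\ref{prop:topology:bisections} provides an open topological subgroup containing $\Pic(M)^0$, it suffices to show that left and right translations by each fixed $[S]$ are homeomorphisms --- which is essentially your closing parenthetical remark (though you would need translations to be homeomorphisms on both sides, not merely each left translation continuous). Your main route instead works with representatives of \emph{pairs} of classes, and continuity of a map out of $\Pic(M)\times\Pic(M)$ does not follow from continuity on representatives unless the quotient map from bimodules to $\Pic(M)$ is open: a product of quotient maps is not in general a quotient map. Openness does hold here (isomorphisms transport bimodule structures homeomorphically, so saturations of open sets are open), but you neither state nor use this, and avoiding the issue altogether is one more reason the translation/Bourbaki reduction is the cleaner path.
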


\begin{proof}
Proposition \ref{prop:topology:bisections} already shows that the
subgroup of $\Pic(M)$ formed by the self Morita bimodules which
admit a bisection is a topological group which contains $\Pic(M)^0$.
In order to conclude that $\Pic(M)$ itself is a topological group,
by \cite[Chap. III.1.2]{bourbaki} it suffices to check the following property:

\begin{lem}
For a fixed $[S]\in \Pic(M)$, the left and right translations by $[S]$:
\[ \Pic(M)\to\Pic(M),\quad [S']\mapsto [S*S'], \quad [S']\mapsto [S'*S], \]
are homeomorphisms.
\end{lem}

\begin{proof}
We consider the case of left translations by $S$. Right translations
are treated exactly the same way, with the role of the maps
exchanged.

Fix a self Morita bimodule $S$. The result will follow from the fact
that any self Morita bimodule $S'$ sufficiently close to $S$ is
isomorphic to a self-Morita bimodule of the form
$S*\Sigma(M)_{(B,\phi)}$, and that a sufficiently small neighborhood
of the identity is spanned by self Morita bimodules of the form
$\Sigma(M)_{(B,\phi)}$.

Our compactness assumption now allows us to resort to the fact that
compact groupoids are rigid (see \cite{CrStMe15,dHF15}). Hence, if the bimodule
$S'$ is close enough to $S$ then there is an isomorphism of the
action groupoids (we choose the left actions),
\[
\xymatrix{
\Sigma(M)\ltimes S'\ar@<3pt>[d]\ar@<-3pt>[d] \ar[rr]^{\hat{\Phi}}&& \Sigma(M)\ltimes S\ar@<3pt>[d]\ar@<-3pt>[d]\\
S'\ar[rr]_{\Phi}&& S, }
\]
which induces an isomorphism on the orbit spaces of these groupoids
which we denote by $\phi:M\to M$ (the compactness of the action
groupoids follows from Lemma~\ref{lem:compact}).
 It follows that under the isomorphism $\Phi:S'\to S$ the (left)
action of $\Sigma(M)$ on $S'$ is taken to a left action of
$\Sigma(M)$ on $S$ with structure maps
\[
\xymatrix{
S'\ar@<3pt>[d]^{q'}\ar@<-3pt>[d]_{p'} \ar[rr]^{\Phi}&& S\ar@<3pt>[d]^{\phi^{-1}\circ q}\ar@<-3pt>[d]_{p}\\
M\ar@{=}[rr]&& M
}
\]
Moreover, this action is symplectic for the symplectic form
$(\Phi^{-1})^*\omega'$. Now observe that, since infinitesimal
generators  of the action coincide, for any $\al\in\Omega^1(M)$ we
have:
\[
i_{X_\al}\omega=p^*\al,\quad i_{X_\al}(\Phi^{-1})^*\omega'=p^*\al.
\]
It follows that $i_{X_\al}(\omega-(\Phi^{-1})^*\omega')=0$. Since
the vector fields $X_\al$ span the fibers of $q$, we conclude that
the closed form $\omega-(\Phi^{-1})^*\omega'$ is $q$-basic, and
there is a closed form $B\in\Omega^2(M)$ such that
\[
\omega-(\Phi^{-1})^*\omega'=q^*B.
\]
This proves the claim that any self Morita bimodule $S'$
sufficiently close to $S$ is isomorphic to a self-Morita bimodule of
the form $S*\Sigma(M)_{(B,\phi)}$ and completes the proof of the lemma.
\end{proof}

This concludes the proof of Theorem \ref{thm:Picard:topological:group}.
\end{proof}

When $\Sigma(M)$ is compact, we can refine the statement in
Theorem~\ref{thm:main}:

\begin{cor}
There is an exact sequence of topological groups:
\[
\xymatrix{
1\ar[r] &\IsoL(\Sigma(M))\ar[r]& \Bis(\Sigma(M))\ar[r]&\G_\pi(M)\ar[r]&\Pic(M)}
\]
such that the image of the last arrow is the open normal subgroup formed by self Morita bimodules which admit a bisection.
\end{cor}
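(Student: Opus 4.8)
The exactness of the displayed sequence as a sequence of abstract groups is precisely the content of Theorem~\ref{thm:main}, so the only genuinely new content here is topological: that all four groups carry natural topologies making them topological groups, that the three maps are continuous homomorphisms, and that the image of the last map is open. The plan is therefore to assemble the topological facts already established and to dispatch the one remaining continuity check, which is routine. A useful preliminary observation is that compactness of $\Sigma(M)$ forces $M$ itself to be compact, since the identity section embeds $M$ as a closed (Lagrangian) submanifold of the compact manifold $\Sigma(M)$. Consequently the Whitney $C^\infty$-topology coincides with the ordinary $C^\infty$-topology on all the relevant spaces of maps, and the standard composition and pullback operations on such spaces are continuous.

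First I would record the topologies and the topological-group statements. The groups $\Bis(\Sigma(M))$ and its subgroup $\IsoL(\Sigma(M))$ are spaces of smooth maps $M\to\Sigma(M)$ equipped with their $C^\infty$-topologies, and are topological groups; $\G_\pi(M)$ carries the subspace topology from $\Omega^2(M)\times\Diff(M)$ and is a topological group, as noted in the discussion preceding Proposition~\ref{prop:topology:bisections}; and, crucially, the compactness hypothesis yields via Theorem~\ref{thm:Picard:topological:group} that $\Pic(M)$ is itself a topological group. This last fact is what makes the statement ``exact sequence of topological groups'' meaningful in the compact case, and it is exactly where the hard work lives.

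Next I would verify continuity of each of the three maps. The inclusion $\IsoL(\Sigma(M))\hookrightarrow\Bis(\Sigma(M))$ is continuous because $\IsoL(\Sigma(M))$ carries the subspace topology. The map $\G_\pi(M)\to\Pic(M)$ is continuous, and in fact continuous, open, with open image, by Proposition~\ref{prop:topology:bisections}. The only map requiring a separate argument is $b\mapsto(\t\circ b,\,b^*\Omega)$ of \eqref{eq:bis:gauge}: its first component is left-composition with the fixed smooth map $\t$, which is continuous in the $C^\infty$-topology, and its second component $b\mapsto b^*\Omega$ is continuous because pullback of the fixed form $\Omega$ depends continuously on the first jet of $b$ (here compactness of $M$ removes any subtlety in the Whitney topology).

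Finally, the assertion that the image of the last map is the \emph{open} normal subgroup formed by self Morita bimodules admitting a bisection is precisely Proposition~\ref{prop:topology:bisections} for openness, together with the normality established in Theorem~\ref{thm:main}. The main obstacle in this whole argument is therefore not located in the corollary itself but is absorbed into Theorem~\ref{thm:Picard:topological:group}, namely the continuity of multiplication in $\Pic(M)$, which is where compactness and the rigidity of compact groupoids are used; granting that, the corollary reduces to collecting the established facts and checking the elementary continuity of \eqref{eq:bis:gauge}.
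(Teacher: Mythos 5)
Your proposal is correct and takes essentially the same route as the paper: the paper's own proof is a one-line observation that $\IsoL(\Sigma(M))$, $\Bis(\Sigma(M))$ and $\G_\pi(M)$ are spaces of maps with evident $C^\infty$-topologies making them topological groups and the arrows continuous homomorphisms, relying implicitly on Theorem~\ref{thm:main}, Proposition~\ref{prop:topology:bisections} and Theorem~\ref{thm:Picard:topological:group}, exactly as you do. Your write-up merely makes explicit the routine continuity checks (post-composition with $\t$, pullback of $\Omega$) and the reduction of the Whitney topology to the ordinary $C^\infty$-topology via compactness of $M$, which the paper leaves implicit.
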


\begin{proof}
The groups $\IsoL(\Sigma(M))$, $\Bis(\Sigma(M))$ and $\G_\pi(M)$ are
spaces of maps and have evident $C^\infty$-topologies, which make
them topological groups and for which the various maps in the
sequence are continuous homomorphisms.
\end{proof}

Similarly, it should be clear that the sequences \eqref{eq:Diff:exact:seq}
\eqref{eq:Gauge:exact:seq} are also exact sequences of topological
groups.

%%%%%%%%%%%%%%%%%%%%%%%%%%%%%%%%%%%%%%%%%%%%%%%%%%%%%%%%%%%%%%%%%%%%%%%%%%%%
%%%%%%%%%%%%%%%%%%%%%%%%%%%%%%%%%%%%%%%%%%%%%%%%%%%%%%%%%%%%%%%%%%%%%%%%%%%%
\section{The Picard Lie algebra}
%%%%%%%%%%%%%%%%%%%%%%%%%%%%%%%%%%%%%%%%%%%%%%%%%%%%%%%%%%%%%%%%%%%%%%%%%%%%
%%%%%%%%%%%%%%%%%%%%%%%%%%%%%%%%%%%%%%%%%%%%%%%%%%%%%%%%%%%%%%%%%%%%%%%%%%%%

In the previous section, we saw that $\Pic(M)$ carries a natural
topology. It is in fact useful to think of it as a (possibly
infinite-dimensional) Lie group. In this section we identify its Lie
algebra, $\pic(M)$, which is easier to compute and provides
information about the "size" of $\Pic(M)$.

Our approach to describe $\pic(M)$ is as follows. We know from
Proposition \ref{prop:topology:bisections} that $\Pic(M)^0$, the connected
component of the identity of $\Pic(M)$, lies in the image of the
homomorphism \eqref{eq:gauge:picard}:
$$
\G_\pi(M)\to \Pic(M).
$$
So we identify $\pic(M)$ in two
steps: first, we describe the Lie algebra $\frakg_\pi(M)$ of
$\G_\pi$; second, we identify the ideal $\mathfrak{I}\subset \frakg_\pi(M)$ corresponding to the Lie algebra of the kernel of the homomorphism
above. We  then set $\pic(M) := \frakg_\pi(M)/\mathfrak{I}$. As a
consequence, we will see that $\pic(M)$ fits into an exact sequence
of Lie algebras that is the infinitesimal version of the exact
sequence \eqref{eq:main:exact:seq}.

In order to fully justify this viewpoint, one needs to regard the
topological groups in the exact sequence \eqref{eq:main:exact:seq}
as (infinite-dimensional) Lie groups. When $\Sigma(M)$ is compact,
so that $M$ is also compact, this is indeed possible and more or
less standard (see, e.g., \cite{Mil83}). When $\Sigma(M)$ is not
compact, one needs some more sophisticated machinery, such as the
convenient setting described in \cite{KM97}. In what follows we will
proceed formally, e.g., describing the Lie algebra $\frakg_\pi(M)$ by
differentiating smooth paths in $\G_\pi(M)$.

%%%%%%%%%%%%%%%%%%%%%%%%%%%%%%%%%%%%%%%%%%%%%%%%%%%%%%%%%%%%%%%%%%%%%%%%%%%%
\subsection{The gauge Lie algebra $\frakg_\pi(M)$}
\label{sec:gauge:Lie:algebra}
%%%%%%%%%%%%%%%%%%%%%%%%%%%%%%%%%%%%%%%%%%%%%%%%%%%%%%%%%%%%%%%%%%%%%%%%%%%%

Recall that the group of  gauge transformations of $\pi$ up to
Poisson diffeomorphisms, $\G_\pi(M)\subset
\Diff(M)\ltimes\Omega^2(M)$, consists of pairs $(\phi,B)$, where:
\begin{enumerate}[(a)]
\item $B$ is a closed 2-form on $M$ such that $I+B^\flat\circ\pi^\sharp:T^*M\to T^*M$ is
invertible;
\item $\phi:M\to M$ is a diffeomorphism such that $\phi_*\pi_B=\pi$.
\end{enumerate}

In order to find its Lie algebra $\frakg_\pi(M)$, we consider a
smoothly parameterized curve $(\phi_t,B_t)\in \G_\pi(M)$ starting at
the identity $(I,0)$. Differentiating at $t=0$, we obtain a pair
$(Z,\beta)\in\X(M)\times \Omega^2(M)$, which must satisfy the
infinitesimal version of conditions (a) and (b):
\[
\frakg_\pi(M)=\{(Z,\be)\in\X(M)\times \Omega^2(M):\d\beta=0,\
\Lie_Z\pi=\pi^\sharp(\beta)\}.
\]
Also, if $(Z_1,\beta_1)$ and $(Z_2,\beta_2)$ are two elements of
$\frakg_\pi(M)$, then their Lie bracket is the semidirect product
Lie bracket:
\[
[(Z_1,\beta_1),(Z_2,\beta_2)]_{\frakg_\pi(M)}=([Z_1,Z_2],\Lie_{Z_1}\beta_2-\Lie_{Z_2}\beta_1).
\]

The Lie algebra $\frakg_\pi(M)$ contains two important Lie
subalgebras:
\begin{itemize}
\item the \textbf{Lie algebra of the group $\Aut(M)$ of Poisson
diffeomorphisms}, which is given by the subspace of Poisson vector
fields:
\[ \X_\pi(M)=\{Z\in\X(M)~:~\Lie_Z\pi=0\}.\]
The inclusion $ \X_\pi(M)\hookrightarrow \frakg_\pi(M)$ is given by $Z\mapsto (Z,0)$.

\item the \textbf{Lie algebra of the group of self gauge transformations}.
Since this group is abelian and 1-connected, its Lie algebra is
given by the 2-forms $\beta\in\Omega^2(M)$ such that
\[ \d\beta=0\text{ and }\pi^\sharp(\beta)=0,
\]
which is easily seen to coincide with the Lie algebra of closed
basic 2-forms:
\[
\Omega_{\cl,\bas}^2(M)=\{\beta\in\Omega^2(M)~:~\d\beta=0,~ i_{X_f}\beta=0,\forall f\in C^\infty(M)\}.
\]
The inclusion $\Omega_{\cl,\bas}^2(M)\hookrightarrow \frakg_\pi(M)$ is given by $\beta\mapsto (0,\beta)$.
\end{itemize}

%%%%%%%%%%%%%%%%%%%%%%%%%%%%%%%%%%%%%%%%%%%%%%%%%%%%%%%%%%%%%%
\subsection{The Picard Lie algebra}

In order to define the Picard Lie algebra as a quotient of
$\frakg_\pi(M)$, let us consider the kernel of the group
homomorphism $\G_\pi(M)\to \Pic(M)$. From the exact sequence
\eqref{eq:main:exact:seq}, we know that this kernel agrees with the
image of the homomorphism $\Bis(\Sigma(M))\to \G_\pi(M)$ defined in
\eqref{eq:bis:gauge}. The infinitesimal counterpart of this group
homomorphism is a Lie algebra homomorphism:
\[
\Omega^1(M)\to \frakg_\pi(M),
\]
where we have used the well-known fact that, regarding
$\Bis(\Sigma(M))$ as an (infinite-dimensional) Lie group, its Lie
algebra is given by the space of sections of the Lie algebroid of
$\Sigma(M)$ (which is $\Omega^1(M)$, with the Koszul bracket
\eqref{eq:Koszul:bracket}, in the case of a Poisson manifold).

To describe this infinitesimal map explicitly, let $b_t$ be a family
of bisections defined by the flow of a 1-form $\eta\in\Omega^1(M)$ (viewed as a section of the Lie algebroid of $\Sigma(M)$), and let
\[
\phi_t:=\t\circ b_t,\quad B_t:=b_t^*\Omega
\]
be the corresponding path in $\G_\pi(M)$, with velocity $(Z,\beta)$
at $t=0$. One can directly verify that the relation between $\eta$
and $(Z,\beta)$ is given by
\[
Z=\pi^\sharp(\eta),\quad \beta=\d\eta,
\]
which defines the desired map $\Omega^1(M)\to \frakg_\pi(M)$. Note
that the pair $(\pi^\sharp(\eta),\d\eta)$ is indeed in
$\frakg_\pi(M)$, since for any 1-form $\eta\in\Omega^1(M)$, the
vector field $\pi^\sharp(\eta)$ satisfies:
\[
\Lie_{\pi^\sharp(\eta)}\pi=\pi^\sharp(\d\eta).
\]

Hence the image of $\Omega^1(M)$ in $\frakg_\pi(M)$, denoted by
$\mathfrak{I}$, is given by
$$
\mathfrak{I}:=\{ (\pi^\sharp(\eta),\d\eta) \,:\, \eta\in
\Omega^1(M)\} \subset \mathfrak{g}_\pi(M).
$$

\begin{lem}\label{lem:ideal}
The subspace $\mathfrak{I}\subset \mathfrak{g}_\pi(M)$ is an ideal.
\end{lem}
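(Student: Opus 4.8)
The plan is to verify the ideal property directly, by showing that $[\frakg_\pi(M),\mathfrak I]\subseteq\mathfrak I$. Since the assignment $\eta\mapsto(\pi^\sharp(\eta),\d\eta)$ is $\Rr$-linear, $\mathfrak I$ is automatically a linear subspace, so it is enough to check that bracketing an element of $\mathfrak I$ against an \emph{arbitrary} element of $\frakg_\pi(M)$ lands back in $\mathfrak I$; this simultaneously gives the subalgebra and the ideal property. Accordingly, I fix $(Z,\beta)\in\frakg_\pi(M)$ and $\eta\in\Omega^1(M)$ and use the semidirect-product bracket to compute
\[
[(Z,\beta),(\pi^\sharp(\eta),\d\eta)]=\big([Z,\pi^\sharp(\eta)],\ \Lie_Z\d\eta-\Lie_{\pi^\sharp(\eta)}\beta\big).
\]
The objective is to exhibit a single $1$-form $\zeta$ for which this pair equals $(\pi^\sharp(\zeta),\d\zeta)$.

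For the first (vector-field) component, I would use that contraction of a $1$-form into a bivector is compatible with the Lie derivative as a derivation, which gives
\[
[Z,\pi^\sharp(\eta)]=\Lie_Z\big(\pi^\sharp(\eta)\big)=\pi^\sharp(\Lie_Z\eta)+(\Lie_Z\pi)^\sharp\eta .
\]
I then substitute the defining constraint $\Lie_Z\pi=\pi^\sharp(\beta)$ of $\frakg_\pi(M)$ and unwind the convention by which $\pi^\sharp(\beta)$ denotes the bivector with sharp map $-\pi^\sharp\circ\beta^\flat\circ\pi^\sharp$ (the convention fixed by the identity $\Lie_{\pi^\sharp(\eta)}\pi=\pi^\sharp(\d\eta)$ recorded just above the lemma). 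This yields $(\Lie_Z\pi)^\sharp\eta=-\pi^\sharp\big(i_{\pi^\sharp(\eta)}\beta\big)$, and hence
\[
[Z,\pi^\sharp(\eta)]=\pi^\sharp\big(\Lie_Z\eta-i_{\pi^\sharp(\eta)}\beta\big),
\]
which singles out the natural candidate $\zeta:=\Lie_Z\eta-i_{\pi^\sharp(\eta)}\beta$.

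It then remains to confirm that the second (2-form) component is exactly $\d\zeta$. Here $\Lie_Z$ commutes with $\d$, and Cartan's magic formula together with the hypothesis $\d\beta=0$ (built into $(Z,\beta)\in\frakg_\pi(M)$) gives $\Lie_{\pi^\sharp(\eta)}\beta=\d\,i_{\pi^\sharp(\eta)}\beta$, so that
\[
\Lie_Z\d\eta-\Lie_{\pi^\sharp(\eta)}\beta=\d\Lie_Z\eta-\d\,i_{\pi^\sharp(\eta)}\beta=\d\zeta .
\]
Therefore the whole bracket equals $(\pi^\sharp(\zeta),\d\zeta)\in\mathfrak I$, proving that $\mathfrak I$ is an ideal. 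I expect the only genuinely delicate point to be the first component: one must interpret $\pi^\sharp(\beta)$ correctly as a bivector and keep the sign in $(\Lie_Z\pi)^\sharp\eta$ straight, which is precisely what the stated gauge identity pins down. The second component is then a routine application of Cartan's formula in which the closedness $\d\beta=0$ does all the work.
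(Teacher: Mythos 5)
Your proof is correct and follows essentially the same route as the paper's: both compute the semidirect-product bracket, use $\d\beta=0$ and Cartan's formula to write the $2$-form component as $\d\bigl(\Lie_Z\eta - i_{\pi^\sharp(\eta)}\beta\bigr)$, and use the Leibniz rule for $\Lie_Z$ together with $\Lie_Z\pi=\pi^\sharp(\beta)$ to identify the vector-field component as $\pi^\sharp\bigl(\Lie_Z\eta - i_{\pi^\sharp(\eta)}\beta\bigr)$. Your explicit handling of the sign convention for $\pi^\sharp(\beta)$ as a bivector (pinned down by the identity $\Lie_{\pi^\sharp(\eta)}\pi=\pi^\sharp(\d\eta)$) is a welcome clarification of a step the paper asserts without comment, but it is the same argument.
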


\begin{proof}
For $(Z,\beta)\in \mathfrak{g}_\pi(M)$, we have
$$
[(Z,\beta),(\pi^\sharp(\eta),\d \eta)]_{\mathfrak{g}_\pi(M)} =
([Z,\pi^\sharp(\eta)],\Lie_{Z}\d \eta -
\Lie_{\pi^\sharp(\eta)}\beta).
$$
Since $\d \beta=0$, we can write $\Lie_{\pi^\sharp(\eta)}\beta = \d
i_{\pi^\sharp(\eta)}\beta$, so
$$
\Lie_{Z}\d \eta - \Lie_{\pi^\sharp(\eta)}\beta = \d(\Lie_{Z} \eta -
i_{\pi^\sharp(\eta)} \beta).
$$
On the other hand:
\begin{align*}
[Z,\pi^\sharp(\eta)] &= \Lie_Z i_\pi \eta = i_\pi \Lie_Z\eta +
i_{[Z,\pi]}\eta \\
& = i_\pi \Lie_Z\eta + i_{\pi^\sharp(\beta)}\eta =
\pi^\sharp(\Lie_Z\eta - i_{\pi^\sharp(\eta)}\beta),
\end{align*}
since we have $[Z,\pi] = \Lie_Z \pi =  \pi^\sharp(\beta)$. Hence, $[(Z,\beta),(\pi^\sharp(\eta),\d \eta)]_{\mathfrak{g}_\pi(M)}\in \mathfrak{I}$.
\end{proof}

We are now ready to define the infinitesimal version of the Picard
group:

\begin{defn}
The {\bf Picard Lie algebra} of a Poisson manifold $(M,\pi)$ is:
\[
\pic(M):= \frac{\{(Z,\beta)\in\X(M)\ltimes \Omega^2_{\cl}(M):
\Lie_Z\pi=\pi^\sharp(\beta)\}}{\mathfrak{I}},
\]
where $\mathfrak{I}$ is the ideal consisting of pairs $(\pi^\sharp(\eta),\d\eta)$, for $\eta\in\Omega^1(M)$.
\end{defn}

Lemma~\ref{lem:ideal} ensures that $\pic(M)$ indeed has a natural
Lie algebra structure.

%%%%%%%%%%%%%%%%%%%%%%%%%%%%%%%%%%%%%%%%%%%%%%%%%%%%%%%%%%%%%%%%%%%%%%%%%%%%
\subsection{The Picard Lie algebra exact sequence}
%%%%%%%%%%%%%%%%%%%%%%%%%%%%%%%%%%%%%%%%%%%%%%%%%%%%%%%%%%%%%%%%%%%%%%%%%%%%

It is essentially a consequence of the definition of the Picard Lie
algebra that it fits into the Lie algebra version of the exact
sequence \eqref{eq:main:exact:seq}. We start by considering the
infinitesimal version of the group morphism
$\Bis(\Sigma(M))\to\G_\pi(M)$:

\begin{lem}
The map
\begin{equation}
\label{eq:bis:gauge:algebra} \Omega^1(M)\to \frakg_\pi(M),\;\;\;
\eta\mapsto (\pi^\sharp(\eta),\d\eta),
\end{equation}
is a homomorphism of Lie algebras.
\end{lem}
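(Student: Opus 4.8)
The plan is to verify directly that the map $\Phi\colon\eta\mapsto(\pi^\sharp(\eta),\d\eta)$ intertwines the Koszul bracket \eqref{eq:Koszul:bracket} on $\Omega^1(M)$ with the semidirect-product bracket on $\frakg_\pi(M)$. Since $\Phi$ does land in $\frakg_\pi(M)$ by the identity $\Lie_{\pi^\sharp(\eta)}\pi=\pi^\sharp(\d\eta)$ noted just above, and since both brackets are given by explicit formulas, the statement reduces to comparing, component by component, the vector-field and $2$-form parts of $\Phi([\eta_1,\eta_2]_\pi)$ and of $[\Phi(\eta_1),\Phi(\eta_2)]_{\frakg_\pi(M)}$.

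For the first (vector-field) component, I would invoke the fact, recalled around \eqref{eq:Koszul:bracket}, that $(T^*M,[~,~]_\pi,\pi^\sharp)$ is a Lie algebroid. One of the Lie algebroid axioms is precisely that the anchor is a bracket homomorphism, which yields
\[
\pi^\sharp([\eta_1,\eta_2]_\pi)=[\pi^\sharp(\eta_1),\pi^\sharp(\eta_2)],
\]
matching the vector-field component of the semidirect-product bracket. For the second ($2$-form) component I would compute directly: applying $\d$ to \eqref{eq:Koszul:bracket} and using $\d^2=0$ to kill the exact term $\d(\pi(\eta_1,\eta_2))$ gives
\[
\d[\eta_1,\eta_2]_\pi=\d\Lie_{\pi^\sharp(\eta_1)}\eta_2-\d\Lie_{\pi^\sharp(\eta_2)}\eta_1.
\]
Since $\d$ commutes with the Lie derivative along any vector field (both being derivations commuting with $\d$, via Cartan's formula $\Lie_X=\d i_X+i_X\d$), the right-hand side equals
\[
\Lie_{\pi^\sharp(\eta_1)}\d\eta_2-\Lie_{\pi^\sharp(\eta_2)}\d\eta_1,
\]
which is exactly the $2$-form component of $[\Phi(\eta_1),\Phi(\eta_2)]_{\frakg_\pi(M)}$. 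Combining the two components shows that $\Phi$ is a homomorphism of Lie algebras.

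This argument is essentially a direct verification, so I do not anticipate a genuine obstacle. The only point requiring a little care is the first component: rather than re-deriving the identity $\pi^\sharp([\eta_1,\eta_2]_\pi)=[\pi^\sharp(\eta_1),\pi^\sharp(\eta_2)]$ by hand from the definition of the Koszul bracket, I would simply appeal to the already-stated Lie algebroid structure on $T^*M$, which packages precisely the needed compatibility of $\pi^\sharp$ with $[~,~]_\pi$; the $2$-form component is then a one-line computation.
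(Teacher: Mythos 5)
Your proposal is correct and follows essentially the same route as the paper: the vector-field component is handled by the anchor property of the Lie algebroid $(T^*M,[~,~]_\pi,\pi^\sharp)$ (the paper phrases this as ``$\pi^\sharp$ maps the Koszul bracket to the Lie bracket of vector fields''), and the $2$-form component by applying $\d$ to \eqref{eq:Koszul:bracket}, killing the exact term with $\d^2=0$, and commuting $\d$ with the Lie derivatives. No gaps; this matches the paper's argument step for step.
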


\begin{proof}
For any Poisson tensor $\pi$, we know that the contraction operator
$\pi^\sharp:\Omega^1(M)\to \X(M)$ maps the Koszul bracket to the
usual Lie bracket of vector fields. On the other hand, we have for
any 1-forms $\eta_1,\eta_2\in\Omega^1(M)$:
\begin{align*}
\d [\eta_1,\eta_2]_{\pi}&=\d(\Lie_{\pi^\sharp\eta_1}\eta_2-\Lie_{\pi^\sharp\eta_2}\eta_1-\d(\pi(\eta_1,\eta_2)))\\
                                                &=\Lie_{\pi^\sharp\eta_1}\d\eta_2-\Lie_{\pi^\sharp\eta_2}d\eta_1.
\end{align*}
Therefore,
\[ [(\pi^\sharp(\eta_1),\d\eta_1),(\pi^\sharp(\eta_1),\d\eta_1)]_{\frakg_\pi(M)}=(\pi^\sharp([\eta_1,\eta_2]_{\pi}),
\d [\eta_1,\eta_2]_{\pi}).
\]
\end{proof}

The Lie algebra $(\Omega^1(M),[~,~]_\pi)$ has the following Lie
subalgebras:
\begin{itemize}
\item The bracket of closed 1-forms is again a closed 1-from, so
$\Omega^1_\cl(M)\subset \Omega^1(M)$ is a Lie subalgebra. This Lie
algebra can be identified with the \textbf{Lie algebra of the group
$\LBis(\Sigma(M))$} of Lagrangian bisections of $\Sigma(M)$: for any
$\al\in \Omega^1_\cl(M)$, the exponential map $\exp(t\al)$ gives a
1-parameter family of Lagrangian bisections of $\Sigma(M)$
(cf.~Proposition \ref{prop:bisections:form}).

\item $\Omega^1(M)$ contains the abelian subalgebra of basic forms:
\[ \Omega^1_\bas(M)=\{\al\in\Omega^1(M)~:~ i_{X_f}\al=0,~ \Lie_{X_f}\al=0,\forall f\in C^\infty(M)\}. \]
This Lie algebra can be identified with the \textbf{Lie algebra of
the group $\IBis(\Sigma(M))$} of static bisections: if $\al\in
\Omega^1_\bas(M)$,  the exponential map $\exp(t\al)$ gives a
1-parameter family of bisections of $\Sigma(M)$ taking values in the
isotropy groups (see the discussion before Proposition
\ref{prop:bisections:form}).
\end{itemize}
It follows that the closed basic forms
$\Omega^1_{\cl,\bas}(M)\subset \Omega^1(M)$ form a Lie subalgebra.

The map \eqref{eq:bis:gauge:algebra} and the definition of the Picard Lie algebra lead to the Lie algebra version of the exact sequence
\eqref{eq:main:exact:seq}:

\begin{thm}
\label{thm:main:algebra} The map \eqref{eq:bis:gauge:algebra} fits
into an exact sequence of Lie algebras,
\begin{equation}
\label{eq:main:exact:seq:algebra} \xymatrix{ 0\ar[r]
&\Omega^1_{\cl,\bas}(M)\ar[r]&
\Omega^1(M)\ar[r]&\frakg_\pi(M)\ar[r]&\pic(M)\ar[r]&0,}
\end{equation}
where the first arrow is the natural inclusion and the last is the
quotient projection.
\end{thm}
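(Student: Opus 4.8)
The plan is to verify that the three arrows are homomorphisms of Lie algebras and then to check exactness at each node; essentially everything is immediate from the definitions and the lemmas already in place, so the argument reduces to a single short kernel computation. First I would record that each map is a homomorphism: the inclusion $\Omega^1_{\cl,\bas}(M)\hookrightarrow\Omega^1(M)$ is one because $\Omega^1_{\cl,\bas}(M)$ was just shown to be a subalgebra; the middle map $\eta\mapsto(\pi^\sharp(\eta),\d\eta)$ of \eqref{eq:bis:gauge:algebra} is one by the preceding lemma; and the quotient projection $\frakg_\pi(M)\to\pic(M)=\frakg_\pi(M)/\mathfrak{I}$ is one precisely because $\mathfrak{I}$ is an ideal, by Lemma~\ref{lem:ideal}.

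Exactness can then be treated node by node. At $\Omega^1_{\cl,\bas}(M)$ the inclusion is injective, and at $\pic(M)$ the projection is surjective, so these endpoints are trivial. Exactness at $\frakg_\pi(M)$ is immediate from the construction: the image of $\eta\mapsto(\pi^\sharp(\eta),\d\eta)$ is by definition the subspace $\mathfrak{I}=\{(\pi^\sharp(\eta),\d\eta):\eta\in\Omega^1(M)\}$, which is exactly the kernel of the projection onto $\frakg_\pi(M)/\mathfrak{I}$.

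The only step needing an actual argument is exactness at $\Omega^1(M)$, namely identifying the kernel of $\eta\mapsto(\pi^\sharp(\eta),\d\eta)$ with $\Omega^1_{\cl,\bas}(M)$. This kernel consists of the $\eta$ with $\pi^\sharp(\eta)=0$ and $\d\eta=0$. I would first note that $\pi^\sharp(\eta)=0$ is equivalent to $i_{X_f}\eta=0$ for all $f\in C^\infty(M)$: since $X_f=\pi^\sharp\d f$, skew-symmetry of $\pi$ gives $i_{X_f}\eta=\eta(\pi^\sharp\d f)=-\,\d f(\pi^\sharp\eta)$, which vanishes for every $f$ exactly when $\pi^\sharp\eta$ is annihilated by all $\d f$, i.e.\ when $\pi^\sharp\eta=0$. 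Granting this, a closed $\eta$ with $i_{X_f}\eta=0$ for all $f$ automatically satisfies $\Lie_{X_f}\eta=i_{X_f}\d\eta+\d i_{X_f}\eta=0$ by Cartan's formula, so $\eta\in\Omega^1_{\cl,\bas}(M)$; the reverse containment is clear from the definition of a closed basic form. Hence the kernel equals $\Omega^1_{\cl,\bas}(M)$, as required.

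The hard part is thus almost entirely contained in this last identification, and the one subtlety worth flagging is that the two defining conditions of a closed basic $1$-form, $i_{X_f}\eta=0$ and $\Lie_{X_f}\eta=0$, collapse into one under the closedness hypothesis $\d\eta=0$. It is exactly this collapse that makes the kernel coincide with $\Omega^1_{\cl,\bas}(M)$, rather than with the a priori larger space $\{\eta:\pi^\sharp(\eta)=0,\ \d\eta=0\}$; once this is observed, the four exactness statements assemble into the sequence \eqref{eq:main:exact:seq:algebra}.
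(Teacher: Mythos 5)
Your proof is correct and follows essentially the same route as the paper's: the paper likewise notes that everything except exactness at $\Omega^1(M)$ holds by construction, and then identifies the kernel of $\eta\mapsto(\pi^\sharp(\eta),\d\eta)$ with the closed basic $1$-forms. Your only addition is to spell out the detail the paper leaves implicit, namely that $\pi^\sharp(\eta)=0$ is equivalent to $i_{X_f}\eta=0$ for all $f$, and that for a closed form this already forces $\Lie_{X_f}\eta=0$ by Cartan's formula, so the kernel is exactly $\Omega^1_{\cl,\bas}(M)$ as defined.
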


\begin{proof}
All that remains to be shown is exactness at the stage
$\Omega^1(M)$. Note that a 1-form $\eta$ is mapped to zero under the
map $\Omega^1(M)\to\frakg_\pi(M)$ if and only if $\eta$ is closed
and $\pi^\sharp(\eta)=0$. This last condition is equivalent to
$i_{X_f}\eta=0$ for all $f\in C^\infty(M)$, i.e., $\eta$ is basic.

\end{proof}

This theorem leads to an alternative interpretation of $\pic(M)$,
which will be useful later for explicit computations. Recall that
given a morphism of complexes $\Phi:(A^\bullet,\d)\to
(B^\bullet,\d)$, one can introduce a relative complex
$C_\Phi^\bullet:=A^{\bullet+1}\oplus B^\bullet$ with differential
\[
\d (a,b)=(\d a, \Phi(a)-\d b).
\]
Since we have the short exact sequence
\[
\xymatrix{ 0 \ar[r] & (B^\bullet,\d) \ar[r] & (C_\Phi^\bullet,\d)
\ar[r] & (A^{\bullet+1},\d)\ar[r] & 0,}
\]
denoting the cohomology of the relative complex by
$H^\bullet(\Phi)$, we have a long exact sequence
\[
\xymatrix{ \cdots \ar[r] & H^\bullet(B)\ar[r] & H^\bullet(\Phi)
\ar[r] & H^{\bullet+1}(A) \ar[r] & H^{\bullet+1}(B)\ar[r] & \cdots}
\]
When we apply this construction to the morphism of complexes
 \[
\pi^\sharp:(\Omega^\bullet(M),\d_{\mathrm{dR}})\to
(\X^\bullet(M),\d_\pi),
\]
we obtain

\begin{cor}
\label{cor:relative:exact:sequence} The Picard Lie algebra is given
by
 \[
 \pic(M)=H^1(\pi^\sharp).
 \]
In particular, it fits into a long exact sequence:
\[
\xymatrix{ \cdots \ar[r] & H^1(M)\ar[r] & H^1_\pi(M)\ar[r] &\pic(M)\ar[r] & H^2(M)\ar[r] & H^2_\pi(M)\ar[r] &\cdots}
\]
\end{cor}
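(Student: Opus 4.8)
The plan is to unwind the mapping-cone complex $C_{\pi^\sharp}^\bullet = \Omega^{\bullet+1}(M)\oplus\X^\bullet(M)$, whose differential is $\d(a,b) = (\d_{\mathrm{dR}}a,\ \pi^\sharp(a) - \d_\pi b)$, in degrees zero through two, and to identify its first cohomology directly with the quotient $\frakg_\pi(M)/\mathfrak{I}$ defining $\pic(M)$. Explicitly $C^0 = \Omega^1(M)\oplus C^\infty(M)$, $C^1 = \Omega^2(M)\oplus\X(M)$ and $C^2 = \Omega^3(M)\oplus\X^2(M)$. I fix the sign convention on the Lichnerowicz complex so that $\d_\pi Z = \Lie_Z\pi$ for $Z\in\X^1(M)$ and $\d_\pi f = -\pi^\sharp(\d f)$ for $f\in C^\infty(M)$; this is the convention compatible, via the chain-map identity $\pi^\sharp\circ\d_{\mathrm{dR}} = \d_\pi\circ\pi^\sharp$ (which holds because $\pi$ is Poisson, i.e.\ $[\pi,\pi]=0$, and is exactly what makes $\d^2=0$ on the cone), with the relation $\Lie_{\pi^\sharp\eta}\pi = \pi^\sharp(\d\eta)$ recorded above.

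First I would compute the one-cocycles. A pair $(\beta,Z)\in C^1$ lies in $\Ker(\d\colon C^1\to C^2)$ precisely when $\d_{\mathrm{dR}}\beta = 0$ and $\pi^\sharp(\beta) - \d_\pi Z = 0$, that is, when $\d\beta = 0$ and $\Lie_Z\pi = \pi^\sharp(\beta)$. After swapping the two factors this is exactly the defining system of $\frakg_\pi(M) = \{(Z,\beta) : \d\beta=0,\ \Lie_Z\pi=\pi^\sharp(\beta)\}$, so $\Ker(\d\colon C^1\to C^2)\cong\frakg_\pi(M)$.

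Next I would compute the one-coboundaries. For $(\eta,f)\in C^0$ one has $\d(\eta,f) = (\d\eta,\ \pi^\sharp(\eta) - \d_\pi f) = (\d\eta,\ \pi^\sharp(\eta+\d f))$. The crucial point is that the $C^\infty(M)$-component contributes nothing new: putting $\tilde\eta := \eta + \d f$ gives $\d\eta = \d\tilde\eta$, so every coboundary has the form $(\d\tilde\eta,\ \pi^\sharp(\tilde\eta))$, and as $(\eta,f)$ ranges over $C^0$ the form $\tilde\eta$ already ranges over all of $\Omega^1(M)$. Hence $\Img(\d\colon C^0\to C^1) = \{(\d\tilde\eta,\pi^\sharp(\tilde\eta)) : \tilde\eta\in\Omega^1(M)\}$, which after swapping factors is exactly the ideal $\mathfrak{I} = \{(\pi^\sharp(\eta),\d\eta) : \eta\in\Omega^1(M)\}$. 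Combining the two computations gives $H^1(\pi^\sharp) = \Ker/\Img = \frakg_\pi(M)/\mathfrak{I} = \pic(M)$.

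Finally, the long exact sequence follows by specializing the general long exact sequence of the relative complex, displayed just before the statement, to $\Phi = \pi^\sharp$: here $H^\bullet(\Omega^\bullet(M),\d_{\mathrm{dR}}) = H^\bullet(M)$ is de Rham cohomology, $H^\bullet(\X^\bullet(M),\d_\pi) = H^\bullet_\pi(M)$ is Poisson cohomology, and the maps $H^\bullet(M)\to H^\bullet_\pi(M)$ are those induced by $\pi^\sharp$; reading off the portion around degree one and substituting $H^1(\pi^\sharp)=\pic(M)$ yields $\cdots\to H^1(M)\to H^1_\pi(M)\to\pic(M)\to H^2(M)\to H^2_\pi(M)\to\cdots$. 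I expect the only delicate point to be the coboundary computation --- namely verifying that $\Img(\d)$ is exactly $\mathfrak{I}$ and not a strictly larger subspace --- which works because $\d_\pi f$ is the $\pi^\sharp$-image of the exact form $-\d f$ and so is absorbed into the $\pi^\sharp(\eta)$ term; the overall sign of $\d_\pi f$ is immaterial, since the opposite choice merely replaces $\tilde\eta=\eta+\d f$ by $\eta-\d f$.
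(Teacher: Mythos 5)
Your proposal is correct and follows essentially the same route as the paper: you unwind the relative (mapping-cone) complex in degrees $0$--$2$, identify the $1$-cocycles with $\frakg_\pi(M)$ and the $1$-coboundaries with the ideal $\mathfrak{I}$ by absorbing the $C^\infty(M)$-component into the $1$-form (the paper does this by replacing $\eta$ with $\eta-\d f$, exactly your $\tilde\eta$ trick up to sign), and then quote the general long exact sequence of a relative complex. Your explicit attention to the sign convention for $\d_\pi$ on functions, and why it is immaterial, is a point the paper glosses over but handles identically in substance.
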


\begin{proof}
The definitions above show that the pair
$(Z,\beta)\in\Omega^2(M)\times \X^1_\pi(M)$ is a cocycle in the
relative complex if and only if
\[
\d\beta=0, \quad \d_\pi Z=\Lie_Z\pi=\pi^\sharp(\be).
\]
On the other hand, two cocycles $(Z_1,\beta_1)$ and $(Z_2,\beta_2)$
are cohomologous if and only if there exists a pair
$(\eta,h)\in\Omega^1(M)\times \X^0_\pi(M)$ such that
\[
\beta_1-\beta_2=\d\eta,\quad Z_1-Z_2=\pi^\sharp(\eta)-\d_\pi h=\pi^\sharp(\eta)-\pi^\sharp(\d h).
\]
Hence, replacing $\eta$ by $\eta-\d f$, we conclude that
\[
H^1(\pi^\sharp)=\frac{\{(Z,\beta)\in\X(M)\ltimes \Omega^2_{\cl}(M):
\Lie_Z\pi=\pi^\sharp(\beta)\}}{\sim},
\]
where $(Z_1,\beta_1)\sim(Z_1,\beta_2)$ if and only there exists a
1-form $\eta$ such that $Z_1-Z_2=\pi^\sharp\eta$ and
$\beta_1-\beta_2=\d\eta$. This agrees with the definition of
$\pic(M)$.

\end{proof}

Finally, we note that the exact sequence
\eqref{eq:main:exact:seq:algebra} has the following exact
subsequences corresponding to the infinitesimal versions of \eqref{eq:Diff:exact:seq} and
\eqref{eq:Gauge:exact:seq}:

\begin{cor}
There is an exact sequence of Lie algebras,
\begin{equation}
\label{eq:Diff:exact:seq:algebra} \xymatrix{ 0\ar[r]
&\Omega^1_{\cl,\bas}(M)\ar[r]&
\Omega^1_{\cl}(M)\ar[r]&\X_\pi(M)\ar[r]&\pic(M),}
\end{equation}
such that the image of the last homomorphism is the Lie subalgebra
of $\pic(M)$ given by
\[  \frac{H^1_\pi(M)}{H^1(M)}=\frac{\X_\pi(M)}{\pi^\sharp(\Omega^1_\cl(M))}\subset\pic(M), \]
which coincides with the Lie algebra of the group $\OutAut(M)$ of
outer Poisson automorphisms.
\end{cor}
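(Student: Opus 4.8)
The plan is to derive this corollary by restricting the exact sequence of Theorem~\ref{thm:main:algebra} to suitable Lie subalgebras, so that it becomes the infinitesimal analogue of the group-level subsequence \eqref{eq:Diff:exact:seq}. The first step is to observe that the homomorphism \eqref{eq:bis:gauge:algebra}, $\eta\mapsto(\pi^\sharp(\eta),\d\eta)$, carries the subalgebra $\Omega^1_\cl(M)$ of closed $1$-forms into the subalgebra $\X_\pi(M)\hookrightarrow\frakg_\pi(M)$ of Poisson vector fields: for $\eta$ closed one has $\d\eta=0$, hence $\eta\mapsto(\pi^\sharp(\eta),0)$, and $\pi^\sharp(\eta)$ is indeed Poisson since $\Lie_{\pi^\sharp(\eta)}\pi=\pi^\sharp(\d\eta)=0$. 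This furnishes the middle arrow $\Omega^1_\cl(M)\to\X_\pi(M)$, $\eta\mapsto\pi^\sharp(\eta)$, while the last arrow is the restriction to $\X_\pi(M)$ of the projection $\frakg_\pi(M)\to\pic(M)$.

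Next I would verify exactness at each stage. At $\Omega^1_\cl(M)$ the kernel of $\eta\mapsto\pi^\sharp(\eta)$ consists of closed forms with $\pi^\sharp(\eta)=0$; since $\pi^\sharp(\eta)=0$ is equivalent to $i_{X_f}\eta=0$ for all $f\in C^\infty(M)$, that is to $\eta$ being \emph{basic}, this kernel is precisely $\Omega^1_{\cl,\bas}(M)$. At $\X_\pi(M)$ I would invoke that, by the definition of $\pic(M)$, the kernel of $\frakg_\pi(M)\to\pic(M)$ is the ideal $\mathfrak{I}=\{(\pi^\sharp(\eta),\d\eta):\eta\in\Omega^1(M)\}$ of Lemma~\ref{lem:ideal}. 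A Poisson vector field $Z$, viewed as $(Z,0)\in\frakg_\pi(M)$, therefore maps to zero in $\pic(M)$ if and only if $(Z,0)\in\mathfrak{I}$, i.e.\ $Z=\pi^\sharp(\eta)$ with $\d\eta=0$; this is exactly the image of the middle arrow, so the sequence is exact at $\X_\pi(M)$.

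This same description identifies the image of the last arrow as $\X_\pi(M)/\pi^\sharp(\Omega^1_\cl(M))$. To rewrite it as $H^1_\pi(M)/\pi^\sharp(H^1(M))$ I would quotient both numerator and denominator by the Hamiltonian vector fields $\pi^\sharp(\d C^\infty(M))$, which turns $\X_\pi(M)$ into $H^1_\pi(M)$ and $\pi^\sharp(\Omega^1_\cl(M))$ into $\pi^\sharp(H^1(M))$ without changing the quotient. Finally, to match this with the Lie algebra of $\OutAut(M)$, I would differentiate \eqref{eq:outer:Poisson}: the Lie algebra of $\Aut(M)$ is $\X_\pi(M)$, and the Lie algebra of $\InnAut(M)$ is the image of the Lie algebra $\Omega^1_\cl(M)$ of $\LBis(\Sigma(M))$ under the infinitesimal target map $\eta\mapsto\rho(\eta)=\pi^\sharp(\eta)$, namely $\pi^\sharp(\Omega^1_\cl(M))$.

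All of these are routine consequences of Theorem~\ref{thm:main:algebra}; the only point requiring genuine care is exactness at $\X_\pi(M)$, where one must check that intersecting the ambient kernel $\mathfrak{I}$ with the subalgebra $\X_\pi(M)$ recovers exactly $\pi^\sharp(\Omega^1_\cl(M))$ and nothing larger. This is where the closedness of $\eta$ is essential: it is precisely the condition that the second component $\d\eta$ of an element of $\mathfrak{I}$ vanish, so that the element lies in $\X_\pi(M)$.
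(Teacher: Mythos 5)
Your proposal is correct and follows essentially the same route the paper intends: the paper presents this corollary precisely as the restriction of the exact sequence of Theorem~\ref{thm:main:algebra} to the subalgebras $\Omega^1_{\cl}(M)\subset\Omega^1(M)$ and $\X_\pi(M)\subset\frakg_\pi(M)$, and your verification of exactness at $\X_\pi(M)$ via $\mathfrak{I}\cap(\X(M)\times\{0\})=\pi^\sharp(\Omega^1_\cl(M))\times\{0\}$, together with the identifications $\X_\pi(M)/\pi^\sharp(\Omega^1_\cl(M))\simeq H^1_\pi(M)/\pi^\sharp(H^1(M))$ and of the Lie algebra of $\OutAut(M)$, fills in exactly the details the paper leaves implicit.
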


\begin{cor}
There is an exact sequence of Lie algebras,
\begin{equation}
\label{eq:Gauge:exact:seq:algebra} \xymatrix{ 0\ar[r]
&\Omega^1_{\cl,\bas}(M)\ar[r]&
\Omega^1_{\bas}(M)\ar[r]&\Omega^2_{\cl,\bas}(M)\ar[r]&\pic(M),}
\end{equation}
such that the image of the last homomorphism is the Lie subalgebra
of $\pic(M)$ given by
\[  H^2_\bas(M)=\frac{\Omega^2_{\cl,\bas}(M)}{\d \Omega^1_{\bas}(M)}\subset\pic(M), \]
which coincides with the Lie algebra of the group $\OutGaug(M)$ of
outer gauge transformations.
\end{cor}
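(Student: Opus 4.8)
The plan is to obtain this sequence as the ``gauge part'' of the main Lie-algebra exact sequence \eqref{eq:main:exact:seq:algebra}, in exact parallel with the way \eqref{eq:Diff:exact:seq:algebra} arises as its ``Poisson-vector-field part.'' Concretely, inside $\frakg_\pi(M)$ I single out the gauge subalgebra $\Omega^2_{\cl,\bas}(M)=\{(0,\beta)\}$ (as in Section~\ref{sec:gauge:Lie:algebra}), and I restrict the homomorphism $\eta\mapsto(\pi^\sharp(\eta),\d\eta)$ of \eqref{eq:bis:gauge:algebra} to the subalgebra $\Omega^1_\bas(M)\subset\Omega^1(M)$. The first point to record is that these restrictions are compatible: for basic $\eta$ one has $i_{X_f}\eta=0$, hence $\pi^\sharp(\eta)=0$, so $(\pi^\sharp(\eta),\d\eta)=(0,\d\eta)$; and a short Cartan-formula computation, $i_{X_f}\d\eta=\Lie_{X_f}\eta-\d i_{X_f}\eta=0$, shows $\d\eta\in\Omega^2_{\cl,\bas}(M)$. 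Thus the middle map of the claimed sequence is simply the de Rham differential $\d\colon\Omega^1_\bas(M)\to\Omega^2_{\cl,\bas}(M)$, and the last map is $\beta\mapsto[(0,\beta)]$, the composite of the inclusion into $\frakg_\pi(M)$ with the projection to $\pic(M)$. Since the whole diagram is the restriction of \eqref{eq:main:exact:seq:algebra} to Lie subalgebras, all maps are Lie-algebra homomorphisms.

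Next I would verify exactness stage by stage. Exactness at $\Omega^1_\bas(M)$ is immediate: the kernel of $\d$ on basic $1$-forms is by definition $\Omega^1_{\cl,\bas}(M)$, and the first arrow is the inclusion. Exactness at $\Omega^2_{\cl,\bas}(M)$ is the only step with real content. A class $\beta$ dies in $\pic(M)=\frakg_\pi(M)/\mathfrak{I}$ precisely when $(0,\beta)\in\mathfrak{I}$, i.e.\ when there is some $\eta\in\Omega^1(M)$ with $\pi^\sharp(\eta)=0$ and $\d\eta=\beta$. The inclusion $\d\Omega^1_\bas(M)\subseteq\ker$ is clear, since basic $\eta$ has $\pi^\sharp(\eta)=0$. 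For the reverse inclusion I would exploit that $\beta$ is \emph{already} basic: given such an $\eta$, Cartan's formula gives $\Lie_{X_f}\eta=i_{X_f}\d\eta+\d i_{X_f}\eta=i_{X_f}\beta=0$, while $\pi^\sharp(\eta)=0$ gives $i_{X_f}\eta=0$, so $\eta$ is itself basic and $\beta=\d\eta\in\d\Omega^1_\bas(M)$. This upgrade of the a priori weaker condition $\pi^\sharp(\eta)=0$ to full basicness of $\eta$ is the crux of the argument, and I expect it to be the main (if modest) obstacle. It identifies the kernel of the last map with $\d\Omega^1_\bas(M)$, so that its image is $\Omega^2_{\cl,\bas}(M)/\d\Omega^1_\bas(M)=H^2_\bas(M)$; being the image of a homomorphism out of an abelian algebra, this is an abelian Lie subalgebra of $\pic(M)$.

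Finally I would match this with the group picture to justify calling $H^2_\bas(M)$ the Lie algebra of $\OutGaug(M)$. At the group level \eqref{eq:Gauge:exact:seq} exhibits $\OutGaug(M)=\Omega^2_{\cl,\bas}(M)/\IBis(\Sigma(M))$ as the image of $\Omega^2_{\cl,\bas}(M)\to\Pic(M)$, with kernel the static bisections. Differentiating this subsequence of \eqref{eq:main:exact:seq}, the Lie algebra of $\IBis(\Sigma(M))$ is $\Omega^1_\bas(M)$ (as recorded in the list preceding Theorem~\ref{thm:main:algebra}), and its structure map $b\mapsto b^*\Omega$ differentiates, via $\exp(\alpha)^*\Omega=\d\alpha$ from Proposition~\ref{prop:bisections:form}, to exactly the differential $\d\colon\Omega^1_\bas(M)\to\Omega^2_{\cl,\bas}(M)$ found above. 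Hence the infinitesimal image is $\Omega^2_{\cl,\bas}(M)/\d\Omega^1_\bas(M)=H^2_\bas(M)$, which is therefore the Lie algebra of $\OutGaug(M)$, completing the identification.
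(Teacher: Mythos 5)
Your proposal is correct and takes essentially the same route as the paper, which states this corollary without separate proof as the restriction of the main sequence \eqref{eq:main:exact:seq:algebra} to the subalgebras $\Omega^1_{\bas}(M)\subset\Omega^1(M)$ and $\Omega^2_{\cl,\bas}(M)\subset\frakg_\pi(M)$. You rightly identify and correctly settle the one point the paper leaves implicit — that exactness does not automatically survive restriction to subalgebras, so one must check that any $\eta\in\Omega^1(M)$ with $\pi^\sharp(\eta)=0$ and $\d\eta$ basic is itself basic, which your Cartan-formula computation $\Lie_{X_f}\eta=i_{X_f}\d\eta+\d i_{X_f}\eta=0$ does.
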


It follows from these corollaries that we have a Lie subalgebra of the Picard Lie algebra $\pic(M)$ given by the semi-direct product
\[ \frac{H^1_\pi(M)}{H^1(M)}\ltimes H^2_\bas(M),\]
where a cohomology class $[Z]\in H^1_\pi(M)$ acts on a class $[\beta]\in H^2_\bas(M)$ by Lie derivative.
\newpage

%%%%%%%%%%%%%%%%%%%%%%%%%%%%%%%%%%%%%%%%%%%%%%%%%%%%%%%%%%%%%%%%%%%%%%%%%%%%
%%%%%%%%%%%%%%%%%%%%%%%%%%%%%%%%%%%%%%%%%%%%%%%%%%%%%%%%%%%%%%%%%%%%%%%%%%%%
\section{Applications and examples}
%%%%%%%%%%%%%%%%%%%%%%%%%%%%%%%%%%%%%%%%%%%%%%%%%%%%%%%%%%%%%%%%%%%%%%%%%%%%
%%%%%%%%%%%%%%%%%%%%%%%%%%%%%%%%%%%%%%%%%%%%%%%%%%%%%%%%%%%%%%%%%%%%%%%%%%%%

We will now show how the methods that we have developed in the
previous sections can be used to compute the Picard group.

%%%%%%%%%%%%%%%%%%%%%%%%%%%%%%%%%%%%%%
\subsection{Computing the Picard group}

Theorem \ref{thm:main} and its corollaries have some immediate
applications to the computation of the Picard group. The following
is a consequence of the exact sequence \eqref{eq:main:exact:seq}:

\begin{cor}
\label{cor:Picard:bisections} If $(M,\pi)$ is a Poisson manifold for
which every bimodule admits a bisection, then
\[
 \Pic(M)\simeq \G_\pi(M)/\Bis(\Sigma(M)).
\]
\end{cor}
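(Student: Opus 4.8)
The plan is to deduce the isomorphism directly from the exact sequence of Theorem~\ref{thm:main} via the first isomorphism theorem; no new geometry is required, since all the work has already gone into establishing that sequence. The key point is that the hypothesis---every self Morita bimodule admits a bisection---is precisely what makes the last map surjective.

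First I would recall the tail of the exact sequence \eqref{eq:main:exact:seq},
\[
\Bis(\Sigma(M))\xrightarrow{\ f\ }\G_\pi(M)\xrightarrow{\ g\ }\Pic(M),
\]
where $f$ and $g$ are the maps \eqref{eq:bis:gauge} and \eqref{eq:gauge:picard}. By Theorem~\ref{thm:main} the image of $g$ is the subgroup of $\Pic(M)$ consisting of classes of bimodules that admit a bisection; under our hypothesis this subgroup is all of $\Pic(M)$, so $g$ is onto. Next I would invoke exactness at $\G_\pi(M)$ to identify $\ker g=\image f$, and then apply the first isomorphism theorem to the surjection $g$, obtaining
\[
\Pic(M)=\image g\simeq \G_\pi(M)/\ker g=\G_\pi(M)/\image f.
\]

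The only point that needs care is notational rather than mathematical: the symbol $\G_\pi(M)/\Bis(\Sigma(M))$ appearing in the statement must be read as the quotient by $\image f$. Since $f$ has nontrivial kernel $\IsoL(\Sigma(M))$, the group $\Bis(\Sigma(M))$ does not sit inside $\G_\pi(M)$ literally; rather $\image f\simeq\Bis(\Sigma(M))/\IsoL(\Sigma(M))$. With this reading the identification $\Pic(M)\simeq\G_\pi(M)/\Bis(\Sigma(M))$ is immediate, so I expect no substantive obstacle---the corollary is a purely formal consequence of Theorem~\ref{thm:main}.
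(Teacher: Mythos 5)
Your proposal is correct and matches the paper's (implicit) argument exactly: the paper states this corollary as an immediate consequence of the exact sequence of Theorem~\ref{thm:main}, which is precisely the surjectivity-plus-first-isomorphism-theorem deduction you give. Your remark that $\G_\pi(M)/\Bis(\Sigma(M))$ must be read as the quotient by the image of the homomorphism \eqref{eq:bis:gauge} (rather than by a literal subgroup) is also the intended reading.
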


From the exact sequence \eqref{eq:Diff:exact:seq}, we obtain:

\begin{cor}
\label{cor:Picard:Lagrangian:bisections} If $(M,\pi)$ is a Poisson
manifold for which every bimodule admits a Lagrangian bisection,
then
\[
\Pic(M)\simeq \OutAut(M).
\]
In particular, this happens if $M$ is compact, with $H^2(M)=0$ and
every bimodule admitting a bisection.
\end{cor}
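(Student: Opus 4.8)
The statement splits into a general criterion and a concrete sufficient condition, and the plan is to obtain the first directly from the exact sequence \eqref{eq:Diff:exact:seq} and to reduce the second to it. For the general criterion, I would simply read off the conclusion of the corollary attached to \eqref{eq:Diff:exact:seq}: the image of $\Aut(M)\to\Pic(M)$ is exactly the normal subgroup of classes represented by bimodules admitting a Lagrangian bisection, and this image coincides with the subgroup $\OutAut(M)\hookrightarrow\Pic(M)$ of \eqref{eq:outer:Poisson}. Under the hypothesis that \emph{every} self Morita bimodule admits a Lagrangian bisection, this image is all of $\Pic(M)$, so the inclusion $\OutAut(M)\hookrightarrow\Pic(M)$ is forced to be an isomorphism. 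No further argument is needed here.

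The work is in the last assertion, where I must show that the three hypotheses ($M$ compact, $H^2(M)=0$, every bimodule admits a bisection) guarantee that every bimodule in fact admits a \emph{Lagrangian} bisection; the general criterion then yields $\Pic(M)\simeq\OutAut(M)$. Given a self Morita bimodule $S$, the hypothesis provides a bisection, so Proposition~\ref{prop:canonical:form} puts $S$ in the normal form $\Sigma(M)_{(\phi,B)}=(\Sigma(M),\Omega-\s^*B)$, with structure maps $\t$ and $\phi\circ\s$ and with $B$ a closed $2$-form. Since $H^2(M)=0$, I would write $B=\d\alpha$ for some $\alpha\in\Omega^1(M)$ and claim that $\exp(\alpha)\in\Bis(\Sigma(M))$ does the job. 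Its image is a bisection of the bimodule, since $\t$ restricts to a diffeomorphism on it (it is a groupoid bisection) and so does $\phi\circ\s$; and by Proposition~\ref{prop:bisections:form} one computes
\[
\exp(\alpha)^*(\Omega-\s^*B)=\d\alpha-(\s\circ\exp(\alpha))^*B=\d\alpha-B=0,
\]
using $\s\circ\exp(\alpha)=\mathrm{id}$, so this bisection is Lagrangian. (If the normalization $q\circ b=\mathrm{id}$ is wanted, I would reparametrize $\exp(\alpha)$ by $\phi^{-1}$, which alters neither its image nor its isotropy.)

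The one genuine obstacle is the existence of $\exp(\alpha)$ for the possibly large $1$-form $\alpha$: the groupoid exponential is a priori defined only on small sections of the Lie algebroid, so for an arbitrary primitive $\alpha$ of $B$ one must know that the associated right-invariant vector field on $\Sigma(M)$ is complete. This is precisely where compactness of $M$ enters — every $1$-form on a compact manifold is compactly supported, and compactly supported sections generate complete right-invariant vector fields, so $\exp(\alpha)$ is globally defined even when the source-fibers of $\Sigma(M)$ are noncompact. Everything else (closedness of $B$, that $\exp(\alpha)$ is an embedded bisection, and the pullback identity above) is routine.
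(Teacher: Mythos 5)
Your proof is correct and is essentially the paper's: the first claim is read off from the exact sequence \eqref{eq:Diff:exact:seq} exactly as the paper does, and the second rests on the same two ingredients --- a primitive $\alpha$ of the closed form $B$ (available since $H^2(M)=0$) and compactness of $M$ to ensure the exponential bisection is globally defined, combined with Proposition~\ref{prop:bisections:form}. The only cosmetic difference is the endgame: you verify directly that $\exp(\alpha)$ is a Lagrangian bisection of $(\Sigma(M),\Omega-\s^*B)$, while the paper instead multiplies $\Sigma(M)_{(\phi,B)}$ by the trivializable bimodule $\Sigma(M)_{(\t\circ b,\,-(\phi^{-1})^*B)}$, with $b=\exp(-\alpha)$, to reach a representative $\Sigma(M)_{(\phi\circ\t\circ b,\,0)}$ carrying the identity as a Lagrangian bisection.
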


\begin{proof}
If every bimodule admits a Lagrangian bisection, the last morphism
in the sequence \eqref{eq:Diff:exact:seq} is surjective, so the
result follows.

For the second part, assume that every bimodule admits a bisection.
Then Theorem \ref{eq:main:exact:seq} shows that every element of the
Picard group can be represented by a bimodule of the form
$\Sigma(M)_{(\phi,B)}\in\G_\pi(M)$. Since $H^2(M)=0$, we can choose
$\al\in \Omega^1(M)$ a primitive of $(\phi^{-1})^*B$.  Since $M$ is
compact, the bisection $b=\exp(-\al)$ is defined and, by Proposition
\ref{prop:bisections:form}, it satisfies
$b^*\Omega=-\d\al=-(\phi^{-1})^*B$. Hence, $\Sigma(M)_{(\t\circ
b,-(\phi^{-1})^*B)}$ is isomorphic to the trivial bimodule. We
conclude that
\[
[\Sigma(M)_{(\phi,B)}]= [\Sigma(M)_{(\phi,B)}*\Sigma(M)_{(\t\circ
b,-(\phi^{-1})^*B)}]=[\Sigma(M)_{(\phi\circ\t\circ b,0)}].
\]
This shows that every element in $\Pic(M)$ can be represented by an
element of the form $\Sigma(M)_{(\phi,0)}$, so it admits a
Lagrangian bisection (namely, the identity bisection).
\end{proof}

Additionally, from the exact sequence \eqref{eq:Gauge:exact:seq}, we
obtain:

\begin{cor}
\label{cor:Picard:static:bisections} Let $(M,\pi)$ be a Poisson
manifold such that every bimodule admits a static bisection. Then
\[
\Pic(M)\simeq  \OutGaug(M)).
\]
\end{cor}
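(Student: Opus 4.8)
The plan is to deduce the statement directly from the exact sequence \eqref{eq:Gauge:exact:seq}, the ``static'' analogue of the sequence \eqref{eq:Diff:exact:seq} that was used to prove Corollary~\ref{cor:Picard:Lagrangian:bisections}. Recall from the corollary accompanying \eqref{eq:Gauge:exact:seq} that the image of the last map $\Omega^2_{\cl,\bas}(M)\to\Pic(M)$ is precisely the subgroup of $\Pic(M)$ consisting of those classes represented by self Morita bimodules that admit a static bisection, and that this subgroup coincides with $\OutGaug(M)$ as defined in \eqref{eq:outer:gauge}.

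First I would record the one point that genuinely needs checking: the property of admitting a static bisection is invariant under isomorphism of bimodules. Indeed, if $\Psi:S\to S'$ is an isomorphism of self Morita bimodules and $b:M\to S$ is a static bisection, then $\Psi\circ b:M\to S'$ is again static, since $\Psi$ intertwines the projections $(p,q)$ with $(p',q')$. Consequently the hypothesis ``every bimodule admits a static bisection'' is really a statement at the level of isomorphism classes, i.e.\ about every element of $\Pic(M)$.

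Given this, the core of the argument is immediate and parallels the proof of Corollary~\ref{cor:Picard:Lagrangian:bisections}: under the hypothesis, every element of $\Pic(M)$ lies in the image of the last map in \eqref{eq:Gauge:exact:seq}, so that map is surjective and $\Pic(M)$ coincides with its image. By the identification recalled above, this image is $\OutGaug(M)$, yielding $\Pic(M)\simeq\OutGaug(M)$.

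I do not expect any real obstacle at this stage, since the whole argument reduces to surjectivity of an already-constructed homomorphism. All of the substantive content has been front-loaded into Theorem~\ref{thm:main} and the passage identifying the image of \eqref{eq:Gauge:exact:seq} with $\OutGaug(M)$; the only genuine verification remaining is the isomorphism-invariance of the static-bisection property noted above, which is what licenses reading the hypothesis directly off $\Pic(M)$.
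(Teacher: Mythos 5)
Your proposal is correct and follows essentially the same route as the paper: the corollary is an immediate consequence of the exact sequence \eqref{eq:Gauge:exact:seq}, whose last map has image equal to the subgroup of classes admitting a static bisection, identified there with $\OutGaug(M)$, so the hypothesis makes that map surjective. Your added remark on isomorphism-invariance of the static-bisection property is a correct (if minor) point that the paper leaves implicit.
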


We also have infinitesimal versions of these results. The most
important and natural one is the following infinitesimal version of
Corollary \ref{cor:Picard:Lagrangian:bisections}:

\begin{cor}
\label{cor:Picard:H1} If $H^2(M)=0$ then
\[
\pic(M)\simeq H^1_\pi(M)/\pi^\sharp(H^1(M)).
\]
In particular, if $H^2(M)=H^1(M)=0$ then the Picard Lie algebra
$\pic(M)$ is isomorphic to the first Poisson cohomology
$H^1_\pi(M)$.
\end{cor}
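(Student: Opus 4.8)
The plan is to read the statement directly off the long exact sequence furnished by Corollary~\ref{cor:relative:exact:sequence}, which identifies $\pic(M)$ with $H^1(\pi^\sharp)$. The relevant portion of that sequence is
\[
H^1(M)\to H^1_\pi(M)\to \pic(M)\to H^2(M),
\]
where the map $H^1(M)\to H^1_\pi(M)$ is the one induced on cohomology by the morphism of complexes $\pi^\sharp\colon(\Omega^\bullet(M),\d_{\mathrm{dR}})\to(\X^\bullet(M),\d_\pi)$.

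First I would impose the hypothesis $H^2(M)=0$. By exactness at $\pic(M)$, the image of $H^1_\pi(M)\to\pic(M)$ equals the kernel of $\pic(M)\to H^2(M)=0$, so this map is surjective. Next, by exactness at $H^1_\pi(M)$, the kernel of $H^1_\pi(M)\to\pic(M)$ is the image of $H^1(M)\to H^1_\pi(M)$; since this latter map is induced by $\pi^\sharp$, its image is precisely $\pi^\sharp(H^1(M))\subset H^1_\pi(M)$. The first isomorphism theorem then yields $\pic(M)\simeq H^1_\pi(M)/\pi^\sharp(H^1(M))$, as claimed. For the final assertion I would simply specialize: if in addition $H^1(M)=0$, then $\pi^\sharp(H^1(M))=0$ and the isomorphism reduces to $\pic(M)\simeq H^1_\pi(M)$.

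There is essentially no serious obstacle here beyond bookkeeping, since everything is a formal consequence of exactness. The one point requiring a little care is the identification of the map $H^1(M)\to H^1_\pi(M)$ appearing in the long exact sequence with the map induced by $\pi^\sharp$, together with the observation that its image is exactly what we denote $\pi^\sharp(H^1(M))$. This is immediate from the construction of the relative complex $C^\bullet_{\pi^\sharp}$ recalled just before Corollary~\ref{cor:relative:exact:sequence}: the short exact sequence of complexes there has connecting homomorphism given precisely by the map induced by $\pi^\sharp$ on cohomology, so no further computation is needed.
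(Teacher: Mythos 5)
Your proposal is correct and is exactly the paper's argument: the paper's proof of this corollary consists of the single line ``Apply the exact sequence from Corollary~\ref{cor:relative:exact:sequence},'' and your write-up just spells out the resulting diagram chase, including the correct identification of the connecting map $H^1(M)\to H^1_\pi(M)$ with the map induced by $\pi^\sharp$.
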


\begin{proof}
Apply the exact sequence from Corollary
\ref{cor:relative:exact:sequence}.
\end{proof}

%%%%%%%%%%%%%%%%%%%%%%%%%%%%%%%%%%%%%%%%%%%%%%%%%%%%%%%%%%%%%%%%%%%%%%%%%%%%
\subsection{Symplectic manifolds}\label{subsec:sympexample}
%%%%%%%%%%%%%%%%%%%%%%%%%%%%%%%%%%%%%%%%%%%%%%%%%%%%%%%%%%%%%%%%%%%%%%%%%%%%

Let $(S,\pi=\omega^{-1})$ be a symplectic manifold, so that
$\pi^\sharp:(\Omega^\bullet(M),\d_\pi)\to(\X^\bullet(M),\d_\pi)$ is
an isomorphism. The following is a direct consequence of Corollary
\ref{cor:relative:exact:sequence}:

\begin{prop}
For a symplectic manifold $(S,\pi=\omega^{-1})$, we have
$\pic(S)=0$.
\end{prop}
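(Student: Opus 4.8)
The plan is to exploit the fact that, for a symplectic manifold, $\pi^\sharp$ is not merely a quasi-isomorphism but an honest isomorphism of cochain complexes, and then read off the vanishing of $\pic(S)$ directly from the long exact sequence of Corollary~\ref{cor:relative:exact:sequence}.

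First I would record the structural input that makes everything work. Since $\omega$ is nondegenerate, the bundle map $\pi^\sharp\colon T^*S\to TS$ is an isomorphism, and its extension to exterior powers yields a graded-algebra isomorphism $\pi^\sharp\colon\Omega^\bullet(S)\to\X^\bullet(S)$ intertwining $\d_{\mathrm{dR}}$ with the Lichnerowicz differential $\d_\pi$. This is the classical statement that, in the symplectic case, Poisson cohomology coincides with de Rham cohomology; in particular the induced maps $H^k(S)\to H^k_\pi(S)$ are isomorphisms for every $k$.

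Next I would invoke Corollary~\ref{cor:relative:exact:sequence}, which identifies $\pic(S)=H^1(\pi^\sharp)$ and inserts it into the long exact sequence
\[
\cdots \longrightarrow H^1(S)\longrightarrow H^1_\pi(S)\longrightarrow \pic(S)\longrightarrow H^2(S)\longrightarrow H^2_\pi(S)\longrightarrow\cdots,
\]
in which the arrows $H^k(S)\to H^k_\pi(S)$ are exactly the maps induced by $\pi^\sharp$. By the previous paragraph these are isomorphisms. The conclusion is then pure homological algebra: surjectivity of $H^1(S)\to H^1_\pi(S)$ forces $H^1_\pi(S)\to\pic(S)$ to be the zero map, while injectivity of $H^2(S)\to H^2_\pi(S)$ forces $\pic(S)\to H^2(S)$ to be zero; exactness at $\pic(S)$ then yields $\pic(S)=0$. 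Equivalently, the relative complex is the mapping cone of the isomorphism $\pi^\sharp$, and the cone of a quasi-isomorphism is acyclic.

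There is essentially no obstacle here: the only substantive point is that $\pi^\sharp$ is a chain isomorphism, which is precisely the hypothesis built into the statement of the proposition. Thus, once this observation is in place, the argument reduces to a one-line application of the long exact sequence, and no further computation is needed.
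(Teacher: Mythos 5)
Your proposal is correct and follows exactly the paper's approach: the paper also observes that $\pi^\sharp$ is an isomorphism of complexes in the symplectic case and then declares the result a direct consequence of Corollary~\ref{cor:relative:exact:sequence}. Your write-up merely makes explicit the routine exactness argument (equivalently, that the cone of a quasi-isomorphism is acyclic) that the paper leaves implicit.
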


This result indicates that $\Pic(S)$ should be a discrete group. In
fact, as we have already observed in Example \ref{ex:sympl}, the
Picard group of $S$ is identified with the group of outer
automorphisms of $\pi_1(S)$:
\[
\Pic(S)\simeq \OutAut(\pi_1(S)).
\]
Recall (see Example~\ref{ex:sympl}) that this identification
associates to a class $[\phi]\in\OutAut(S)$ the class of the
bimodule $\widetilde{S}\times_{\pi_1(S)}\overline{\widetilde{S}}$,
where $\widetilde{S}$ is the universal covering space of $S$ and
$\pi_1(S)$ acts on one factor by deck transformations and on the
other factor via the automorphism $\phi:\pi_1(S)\to \pi_1(S)$.

The image of the homomorphism $\G_\pi(S)\to\Pic(S)$ consists of
those bimodules which admit a bisection, and the image of
$\Aut(S)\to \Pic(S)$ consists of those bimodules which admit a
Lagrangian bisection. The following proposition shows that the
existence of bisections amounts to a Nielsen-type realization
problem:

\begin{prop}
A bimodule $\widetilde{S}\times_{\pi_1(S)}\overline{\widetilde{S}}$
associated with $[\phi]\in\OutAut(\pi_1(S))$ admits a (Lagrangian)
bisection if and only if there is (symplectic) diffeomorphism
$\Phi:S\to S$ such that $\phi=\Phi_*$.
\end{prop}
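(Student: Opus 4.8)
The plan is to identify, under the isomorphism $\Pic(S)\simeq\OutAut(\pi_1(S))$ recalled above, the two subgroups cut out by the general theory: the image of $\Aut(S)\to\Pic(S)$, which by the corollary of Theorem~\ref{thm:main} (sequence~\eqref{eq:Diff:exact:seq}) consists \emph{precisely} of the classes admitting a Lagrangian bisection, and the image of $\G_\pi(S)\to\Pic(S)$, which by Theorem~\ref{thm:main} consists precisely of the classes admitting a bisection. Write $S_\phi:=\widetilde{S}\times_{\pi_1(S)}\overline{\widetilde{S}}$ for the bimodule attached to $[\phi]$, so that $[\phi]$ and $[S_\phi]$ correspond under $\Pic(S)\simeq\OutAut(\pi_1(S))$. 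It then suffices to show that these two images correspond, respectively, to the outer classes $[\Phi_*]$ of symplectomorphisms $\Phi$ and to the outer classes $[\Phi_*]$ of arbitrary diffeomorphisms $\Phi$; granting this, $S_\phi$ admits a Lagrangian bisection iff $[\phi]=[\Phi_*]$ for some symplectomorphism $\Phi$, and a bisection iff $[\phi]=[\Phi_*]$ for some diffeomorphism, which is the assertion.

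The key input is an explicit identification of bimodules. Since $\pi=\omega^{-1}$ is nondegenerate, $\pi^\sharp$ is an isomorphism of Lie algebroids $T^*S\to TS$, so $\Sigma(S)$ integrates $TS$ and is the fundamental groupoid $\Pi_1(S)=(\widetilde{S}\times\widetilde{S})/\pi_1(S)$ (diagonal action), which is exactly the trivial bimodule $S_{\mathrm{id}}$. Given a diffeomorphism $\Phi$, I would choose a lift $\widetilde\Phi\colon\widetilde S\to\widetilde S$ and descend the map $(m,n)\mapsto(m,\widetilde\Phi(n))$ to a bimodule isomorphism $\Sigma(S)_{(\Phi,B)}\xrightarrow{\ \sim\ }S_{\Phi_*}$, where $B:=\Phi^*\omega-\omega$; that it intertwines the anchors is immediate, and a short computation with the reduced symplectic forms shows that the pullback of the form on $S_{\Phi_*}$ equals $\Omega-\s^*B$, the form on $\Sigma(S)_{(\Phi,B)}$. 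On a symplectic manifold the closed basic $2$-forms vanish, so self gauge transformations are trivial and $\G_\pi(S)=\{(\Phi,\Phi^*\omega-\omega):\Phi\in\Diff(S)\}\cong\Diff(S)$; moreover $\Aut(S)$ is exactly the group of symplectomorphisms, corresponding to the case $B=0$. Hence the two images are $\{[S_{\Phi_*}]:\Phi\in\Diff(S)\}$ and $\{[S_{\Phi_*}]:\Phi\ \text{symplectic}\}$.

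Combining the two inputs yields the proposition: $[S_\phi]$ lies in the image of $\G_\pi(S)\to\Pic(S)$ — equivalently, $S_\phi$ admits a bisection — precisely when $[\phi]=[\Phi_*]$ for some diffeomorphism $\Phi$, and $[S_\phi]$ lies in the image of $\Aut(S)\to\Pic(S)$ — equivalently, $S_\phi$ admits a Lagrangian bisection — precisely when $[\phi]=[\Phi_*]$ for some symplectomorphism. One should note that, since a symplectic bimodule isomorphism carries bisections to bisections and Lagrangian bisections to Lagrangian bisections, membership of the class $[S_\phi]$ in one of these images is indeed equivalent to the representative $S_\phi$ itself admitting the corresponding bisection.

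I expect the main obstacle to be the symplectic-form bookkeeping in the identification $\Sigma(S)_{(\Phi,B)}\cong S_{\Phi_*}$: one must keep careful track of how the gauge $2$-form $B$ enters the reduced form on $S_{\Phi_*}$ and verify that $\widetilde\Phi$ produces a genuine \emph{symplectic} isomorphism rather than merely a diffeomorphism of bimodules. An alternative, more hands-on route avoids this by working directly on the universal cover: a bisection of $S_\phi$ lifts to a $\pi_1$-equivariant map $\widetilde S\to\widetilde S\times\widetilde S$ of the form $m\mapsto(h(m),m)$, whose first component is a lift of the diffeomorphism $\Phi=p\circ b$ realizing $\phi$ after passing to the inverse and adjusting by an inner automorphism, with $b$ Lagrangian exactly when $\Phi$ is symplectic; this reproves the statement without the bimodule identification, at the cost of tracking inner automorphisms and an inversion.
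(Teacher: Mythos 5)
Your proof is correct, but it follows a genuinely different route from the paper's. The paper proves the proposition directly by covering-space theory: given $\Phi$ with $\Phi_*=\phi$, a lift $\widetilde{\Phi}$ yields the equivariant graph $x\mapsto[x,\widetilde{\Phi}(x)]$, which descends to a (Lagrangian) bisection; conversely, the preimage of a bisection in $\widetilde{S}\times\overline{\widetilde{S}}$ is a submanifold whose connected components project to $\widetilde{S}$ as covering maps, hence diffeomorphisms, so a component is the graph of an equivariant map $\Psi$ covering the (symplectic) diffeomorphism $\Phi:=\t\circ b$, which then realizes $\phi$. You instead derive the statement from the general machinery: Theorem~\ref{thm:main} and the corollary giving \eqref{eq:Diff:exact:seq} characterize the classes admitting a bisection (resp.\ Lagrangian bisection) as the images of $\G_\pi(S)$ (resp.\ $\Aut(S)$) in $\Pic(S)$; you then compute $\G_\pi(S)=\{(\Phi,\Phi^*\omega-\omega)\}\cong\Diff(S)$ (correct, since basic $2$-forms vanish in the symplectic case and $\phi_*\pi_B=\pi$ becomes $\Phi^*\omega=\omega+B$), and you exhibit the isomorphism $\Sigma(S)_{(\Phi,B)}\cong S_{\Phi_*}$ via $(m,n)\mapsto(m,\widetilde{\Phi}(n))$, whose form bookkeeping I checked and which does come out to $\Omega-\s^*B$. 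The trade-off: your argument is a clean conceptual consequence of the exact sequence and makes the map $\G_\pi(S)\to\Pic(S)$ completely explicit in the symplectic case, but in the backward direction it needs the injectivity of $\OutAut(\pi_1(S))\to\Pic(S)$ (i.e.\ that $S_\phi\cong S_\psi$ forces $[\phi]=[\psi]$), which is imported from \cite{BW04}, whereas the paper's proof is elementary and self-contained, using neither Theorem~\ref{thm:main} nor the classification of bimodules. Two small points you handled correctly but should keep explicit: membership of the class $[S_\phi]$ in those images is equivalent to $S_\phi$ itself admitting a bisection because bimodule isomorphisms preserve (Lagrangian) bisections; and ``$\phi=\Phi_*$'' in the statement can only mean equality up to inner automorphisms, since $\Phi_*$ itself depends on the choice of lift $\widetilde{\Phi}$ -- your reduction to $[\phi]=[\Phi_*]$ in $\OutAut(\pi_1(S))$ is the right reading. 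Incidentally, the ``alternative, more hands-on route'' you sketch at the end is essentially the paper's actual proof.
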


\begin{proof}
We can realize the bimodule as the quotient
\[
\xymatrix{
\widetilde{S}\times\overline{\widetilde{S}}\ar@<3pt>[d]\ar@<-3pt>[d]\ar[rr]&& \widetilde{S}\times_{\pi_1(S)}\overline{\widetilde{S}}\ar@<3pt>[d]\ar@<-3pt>[d]\\
 \widetilde{S}\ar[rr] \ar@<-4pt>@{-->}@/_/[u]_{\tilde{b}}&& S, \ar@<-4pt>@{-->}@/_/[u]_{b}
}
\]
where $\pi_1(S)$ acts on the product
$\widetilde{S}\times\overline{\widetilde{S}}$ by acting by deck
transformation on one factor and via the automorphism
$\phi:\pi_1(S)\to \pi_1(S)$ on the other.

Given a (symplectic) diffeomorphism $\Phi:S\to S$ such that
$\Phi_*=\phi$, let $\tilde{\Phi}:\widetilde{S}\to \widetilde{S}$ be
a lift of $\Phi$ to the universal covering space. Notice that
\[
\tilde{\Phi}([\gamma]\cdot x)=\phi([\gamma])\tilde{\Phi}(x),\quad
[\gamma]\in\pi_1(S).
\]
Hence, we can define a (Lagrangian) bisection
\[ \tilde{b}:\widetilde{S}\to\widetilde{S}\times\overline{\widetilde{S}},\quad x\mapsto [x,\tilde{\Phi}(x)], \]
which is $\pi_1(M)$-equivariant. So this bisection descends to a
(Lagrangian) bisection
\[
b:S\to \widetilde{S}\times_{\pi_1(S)}\overline{\widetilde{S}},
\]
and one may verify that $\Phi=\t\circ b$.

Conversely, given a (Lagrangian) bisection
\[
b:S\to \widetilde{S}\times_{\pi_1(S)}\overline{\widetilde{S}},
\]
we can lift it to a bisection:
\[
\tilde{b}:\widetilde{S}\to\widetilde{S}\times\overline{\widetilde{S}}.
\]
To see this, observe that the quotient maps in the diagram above are
local diffeomorphisms, so there is an embedded (Lagrangian)
submanifold $N\subset \widetilde{S}\times\widetilde{S}$ which covers
the image of $b$. The restriction of the projections to a connected
component of $N$ are covering maps of $\widetilde{S}$, so they must
be diffeomorphisms. Hence a connected component of $N$ is the graph
of a (Lagrangian) bisection $\tilde{b}$ which lifts the (Lagrangian)
bisection $b$. Now observe that any (Lagrangian) bisection
$\tilde{b}$ must be of the form $x\mapsto [x,\Psi(x)]$, where
$\Psi:\widetilde{S}\to \widetilde{S}$ is a (symplectic)
diffeomorphism satisfying
\[
\Psi([\gamma]\cdot x)=\phi([\gamma])\Psi(x),\quad
[\gamma]\in\pi_1(S).
\]
Since $\tilde{b}$ covers the section $b$, it follows that
$\Psi:\widetilde{S}\to \widetilde{S}$ covers the (symplectic)
diffeomorphism $\Phi:=\t\circ b:S\to S$, and that this map satisfies
\[ \Phi_*([\gamma])=\phi([\gamma]). \]
\end{proof}

We now provide examples of symplectic manifolds showing that,
regarding existence of bisections, all possibilities can occur.

\begin{examp}
If $S$ is a closed oriented surface, then the Dehn-Nielsen-Baer Theorem
(see, e.g., \cite[Chap.~8]{FM12}) shows that every automorphism of $\pi_1(S)$ is
realizable by a diffeomorphism, so that every bimodule admits a
bisection. On the other hand, if one takes, e.g., $S=\Tt^2$ and
considers an automorphism $\phi\in \GL(2,\Zz)=\Aut(\pi_1(\Tt^2))$
with determinant $-1$, then any diffeomorphism $\Phi:\Tt^2\to\Tt^2$
realizing $\phi$ is orientation reversing, so it is never
symplectic. The corresponding bimodule has bisections, but no
Lagrangian bisections.
\end{examp}

\begin{examp}
\label{ex:Gustavo} Let $S$ be the symplectic fibration over $\Tt^2$
with fiber $\Ss^2\times \Ss^2$ obtained from the mapping torus
defined by the symplectomorphisms of $\Ss^2\times \Ss^2$ given by
$\phi_1(x,y)=(x,y)$ and $\phi_2(x,y)=(y,x)$. We can extend the
symplectic structure on the fibers to a symplectic structure on $S$,
since the fibers are 1-connected and the base is symplectic. Now let
\[
\phi=\left(\begin{matrix} k & l \\ r & s
\end{matrix}\right)\in\GL(2;\Zz)=\Aut(\pi_1(S)),
\]
and assume there exists a diffeomorphism $\Phi: S \to S$ inducing
$\phi:\pi_1(S)\to\pi_1(S)$. Then $\Phi$ must be homotopic to a
fibred homotopy equivalence $\Psi:S \to S$. Let $\psi: \Ss^2\times\Ss^2 \to \Ss^2\times\Ss^2$ be
the restriction of $\Psi$ to a fiber. In homotopy we have the
following relations:
\begin{align*}
\psi\phi_1 &= \phi_1^k \phi_2^r \psi,\\
\psi\phi_2 &= \phi_1^l \phi_2^s \psi.
\end{align*}
Since $\phi_1$ is the identity, the first equation shows that
$\phi_2^r$ must be homotopic to the identity. If $r$ is odd, this is
a contradiction.

So any $\phi\in\Aut(\pi_1(S))$ with $r$ odd is not realizable by a
diffeomorphism, and the corresponding bimodule does not admit any
bisection.
\end{examp}

%%%%%%%%%%%%%%%%%%%%%%%%%%%%%%%%%%%%%%%%%%%%%%%%%%%%%%%%%%%%%%%%%%%%%%%%%%%%
\subsection{Zero Poisson structures}\label{subsec:zero}
%%%%%%%%%%%%%%%%%%%%%%%%%%%%%%%%%%%%%%%%%%%%%%%%%%%%%%%%%%%%%%%%%%%%%%%%%%%%

Let $(M,\pi\equiv 0)$ be a manifold equipped with the zero Poisson
structure. The following is a direct consequence of Corollary
\ref{cor:relative:exact:sequence}:

\begin{prop}
If $M$ is equipped with the zero Poisson structure, then
\[
\pic(M)=H^1_\pi(M)\ltimes H^2(M) = \mathfrak{X}(M)\ltimes H^2(M).
\]
\end{prop}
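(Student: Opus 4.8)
The plan is to apply Corollary~\ref{cor:relative:exact:sequence}, which identifies $\pic(M)$ with the relative cohomology $H^1(\pi^\sharp)$ and places it in the long exact sequence relating de Rham cohomology $H^\bullet(M)$ and Poisson cohomology $H^\bullet_\pi(M)$. The key simplification comes from the zero Poisson structure: since $\pi \equiv 0$, the bundle map $\pi^\sharp:T^*M\to TM$ is identically zero, and the Poisson differential $\d_\pi$ on multivector fields $\X^\bullet(M)$ vanishes. Consequently $H^k_\pi(M) = \X^k(M)$ for all $k$, since every multivector field is a $\d_\pi$-cocycle and there are no coboundaries. In particular $H^1_\pi(M) = \X^1(M) = \X(M)$, which already matches the first factor claimed in the statement.

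Next I would examine the induced map $\pi^\sharp_*: H^k(M)\to H^k_\pi(M)$ appearing in the long exact sequence. Because $\pi^\sharp = 0$ as a bundle map, the induced morphism of complexes is the zero map, and hence the connecting maps $H^k(M)\to H^k_\pi(M)$ in the long exact sequence are all zero. This causes the long exact sequence to break apart into short exact sequences. Concretely, the relevant portion
\[
\xymatrix{ H^1(M)\ar[r] & H^1_\pi(M)\ar[r] &\pic(M)\ar[r] & H^2(M)\ar[r] & H^2_\pi(M)}
\]
has zero maps into $H^1_\pi(M)$ and into $H^2_\pi(M)$, so it reduces to a short exact sequence
\[
0\to H^1_\pi(M)\to \pic(M)\to H^2(M)\to 0,
\]
which identifies $\pic(M)$ as an extension of $H^2(M)$ by $H^1_\pi(M) = \X(M)$.

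To upgrade this extension to the semidirect product stated, I would argue directly from the definition of $\pic(M)$ as a quotient of $\frakg_\pi(M)$. When $\pi = 0$, the defining condition $\Lie_Z\pi = \pi^\sharp(\beta)$ is vacuous (both sides vanish), so $\frakg_\pi(M) = \X(M)\ltimes \Omega^2_{\cl}(M)$ is the full semidirect product. The ideal $\mathfrak{I}$ consists of pairs $(\pi^\sharp(\eta),\d\eta) = (0,\d\eta)$, i.e., $\{0\}\ltimes \d\Omega^1(M)$. Quotienting gives $\pic(M) = \X(M)\ltimes \big(\Omega^2_{\cl}(M)/\d\Omega^1(M)\big) = \X(M)\ltimes H^2(M)$, where the $\X(M)$-action on $H^2(M)$ is by Lie derivative, exactly as in the semidirect-product structure recorded at the end of the previous section. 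The main subtlety to get right is the Lie-algebra structure rather than mere linear isomorphism: one must verify that the bracket descending to the quotient is precisely the semidirect product bracket with $\X(M)$ acting on the \emph{cohomology} $H^2(M)$ by Lie derivative, which is well defined since $\Lie_Z$ preserves both closed and exact forms. I expect this bookkeeping — confirming that the short exact sequence splits compatibly with brackets — to be the only nontrivial point, and it follows transparently from the explicit form of $\frakg_\pi(M)$ and $\mathfrak{I}$ above.
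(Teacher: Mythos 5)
Your proposal is correct and follows essentially the same route as the paper: the paper deduces the proposition directly from Corollary~\ref{cor:relative:exact:sequence}, i.e., from the identification $\pic(M)=H^1(\pi^\sharp)$, which for $\pi\equiv 0$ amounts to exactly the quotient computation you perform, $\frakg_\pi(M)/\mathfrak{I}=\X(M)\ltimes\bigl(\Omega^2_{\cl}(M)/\d\Omega^1(M)\bigr)=\X(M)\ltimes H^2(M)$. Your preliminary detour through the long exact sequence is harmless but redundant, since it only recovers the vector-space extension, and your final direct computation of the quotient (which is the paper's actual argument) already yields both that identification and the bracket.
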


We now revisit the Picard group. Let $M\stackrel{p}{\lmap}
(S,\omega)\stackrel{q}{\rmap} \overline{M}$ be a bimodule for the
zero Poisson structure on $M$. Since the symplectic groupoid
$\Sigma(M)=T^*M$ has $\s=\t$, the fibers of $p$ and $q$ must
coincide, so that $p=\phi\circ q$ for some diffeomorphism $\phi:M\to
M$. Since the fibers of $\s=\t$ are contractible, so are the fibers
of $p$ and $q$. It follows that we can choose a section $b:M\to S$
of $q:S\to M$, which is automatically a bisection and $p\circ
b=\phi$. Hence, every bimodule has a bisection and, by Corollary
\ref{cor:Picard:bisections},
\[
\Pic(M)\simeq \G_\pi(M)/\Bis(T^*M).
\]
Now observe that
\begin{enumerate}
\item[(i)] $\G_\pi(M)=\Diff(M)\ltimes \Omega_\cl^2(M)$, since $\pi=0$;
\item[(ii)] $\Bis(T^*M)=\Omega^1(M)$ and the action of $\al\in\Omega^1(M)$ on a
pair $(\phi,B)$ gives $(\phi,B+\d\al)$.
\end{enumerate}
It follows that
\[
\Pic(M)\simeq \Diff(M)\ltimes H^2(M),
\]
which gives another proof of the result of  \cite{BW04} as a
consequence of more general principles revealed by our methods.

%%%%%%%%%%%%%%%%%%%%%%%%%%%%%%%%%%%%%%%%%%%%%%%%%%%%%%%%%%%%%%%%%%%%%%%%%%%%
\subsection{Linear Poisson structures}
%%%%%%%%%%%%%%%%%%%%%%%%%%%%%%%%%%%%%%%%%%%%%%%%%%%%%%%%%%%%%%%%%%%%%%%%%%%%

In sections \ref{subsec:sympexample} and \ref{subsec:zero} we saw
how our general method recovers results from \cite{BW04}. We now
discuss the Picard group of a linear Poisson structure on the dual
of a Lie algebra $\gg$, an example proposed in \cite{BW04} but not
handled there. We start with a description of the Picard Lie
algebra:

\begin{thm}
For the linear Poisson structure on the dual of a Lie algebra $\gg$,
the Picard Lie algebra agrees with the 1st Poisson cohomology:
\[
\pic(\gg^*)\simeq H^1_\pi(\gg^*).
\]
In particular, if $\gg$ is compact and semi-simple, then
$\pic(\gg^*)=0$.
\end{thm}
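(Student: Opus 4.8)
The plan is to leverage the fact that, as a smooth manifold, $\gg^*$ is just the vector space $\Rr^{\dim\gg}$ and hence contractible. Its de Rham cohomology is therefore concentrated in degree zero; in particular
\[
H^1(\gg^*)=0 \qquad\text{and}\qquad H^2(\gg^*)=0.
\]
This is the only geometric input the first assertion requires, and everything else is formal once the machinery of Section~4 is in place. I would first note, in passing, that $\gg^*$ is integrable, with $\Sigma(\gg^*)\cong T^*G$ Hausdorff for $G$ the $1$-connected group integrating $\gg$, so that $\pic(\gg^*)$ is indeed defined.

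First I would invoke Corollary~\ref{cor:relative:exact:sequence}, which identifies $\pic(\gg^*)$ with the relative cohomology $H^1(\pi^\sharp)$ and inserts it into the long exact sequence relating $H^\bullet(\gg^*)$, $H^\bullet_\pi(\gg^*)$ and $\pic(\gg^*)$. Substituting the two vanishing groups above collapses the relevant portion of that sequence to $0\to H^1_\pi(\gg^*)\to\pic(\gg^*)\to 0$, yielding the isomorphism $\pic(\gg^*)\simeq H^1_\pi(\gg^*)$ immediately. Equivalently, one may simply quote the ``in particular'' clause of Corollary~\ref{cor:Picard:H1}, which already packages the case $H^1=H^2=0$.

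For the concluding statement I would then appeal to the known vanishing of the first Poisson cohomology of $\gg^*$ in the compact semisimple case, namely $H^1_\pi(\gg^*)=0$. Conceptually, $H^1_\pi$ records Poisson vector fields modulo Hamiltonian ones, i.e.\ the infinitesimal outer automorphisms; for $\gg$ of compact type this vanishes, in parallel with Whitehead's lemma $H^1(\gg;\gg)=0$, because every infinitesimal Poisson symmetry is Hamiltonian. I would cite \cite{GW92} for this fact and conclude $\pic(\gg^*)=0$.

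The point worth emphasizing is that, within this paper, the theorem is essentially immediate: its content is entirely transported into two external facts---the contractibility of $\gg^*$ and the vanishing $H^1_\pi(\gg^*)=0$ for compact semisimple type---so there is no genuine internal obstacle. The only care required is the bookkeeping check that the hypotheses of Corollary~\ref{cor:relative:exact:sequence} apply, which the integrability remark above supplies.
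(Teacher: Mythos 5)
Your proposal is correct and follows essentially the same route as the paper: the first assertion is exactly an application of Corollary~\ref{cor:Picard:H1} (whose proof is precisely the long exact sequence of Corollary~\ref{cor:relative:exact:sequence} that you unfold), using $H^1(\gg^*)=H^2(\gg^*)=0$ by contractibility, and the second assertion is the vanishing $H^1_\pi(\gg^*)=0$ from \cite{GW92}, which the paper justifies via $H^\bullet_\pi(\gg^*)\simeq H^\bullet(\gg)\otimes\mathrm{Cas}(\gg^*)$ and the first Whitehead lemma $H^1(\gg)=0$ (trivial coefficients, rather than the $H^1(\gg;\gg)$ you mention in passing). No gap; the paper's proof is just as short.
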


\begin{proof}
For the first assertion, apply Corollary \ref{cor:Picard:H1}. For
the second part, we recall that if $\gg$ is a compact semi-simple
Lie algebra then $H^1_\pi(\gg^*)=0$. In fact, it is proved in
\cite{GW92} that, for a compact semi-simple Lie algebra,
\[
 H_\pi^\bullet(\gg^*)\simeq H^\bullet(\gg)\otimes \mathrm{Cas}(\gg^*).
 \]
By the first Whitehead Lemma, $H^1(\gg)=0$, so the lemma follows.
One can also prove this by observing that if $G$ is the 1-connected Lie group integrating $\gg$, then
$\gg^*$ integrates to the proper groupoid $T^*G\tto \gg^*$, which is source 1-connected. The
Van Est Theorem of \cite{Cr03} shows that the 1st differentiable
groupoid cohomology of $T^*G$ is isomorphic to the 1st Poisson
cohomology of $\gg^*$. The vanishing of differentiable cohomology
for proper Lie groupoids \cite{Cr03} implies that
$H^1_\pi(\gg^*)=0$.
\end{proof}

This theorem leads to a natural conjecture: the Picard group of
$\gg^*$ is isomorphic to the group of outer Poisson automorphisms of
$\gg^*$,
\[
\Pic(\gg^*)\simeq\OutAut(\gg^*).
\]
Although we cannot prove this in general, we can show that this
result holds for a compact, semi-simple Lie algebra. In fact, we
have the following result, which was conjectured in \cite{BW04}:

\begin{thm}
\label{thm:compact:semisimple} If $\gg$ is a compact semi-simple Lie
algebra then
\[ \Pic(\gg^*)\simeq \OutAut(\gg^*)\simeq \OutAut(\gg). \]
\end{thm}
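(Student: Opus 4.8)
The plan is to establish the two isomorphisms separately, as indicated in the introduction: first $\Pic(\gg^*)\simeq\OutAut(\gg^*)$ via Corollary~\ref{cor:Picard:Lagrangian:bisections}, and then $\OutAut(\gg^*)\simeq\OutAut(\gg)$ by a Moser-type argument. Throughout I would use that, since $\gg$ is compact and semi-simple, its $1$-connected integration $G$ is compact, so that $\Sigma(\gg^*)=T^*G\tto\gg^*$ is a \emph{source-compact} (proper) symplectic groupoid; moreover $\gg^*$ is contractible, so $H^1(\gg^*)=H^2(\gg^*)=0$, and $H^1_\pi(\gg^*)=0$ by the computation recalled above.

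For the first isomorphism I would verify the hypotheses of Corollary~\ref{cor:Picard:Lagrangian:bisections}, i.e.\ that every self Morita bimodule $\gg^*\stackrel{p}{\lmap}S\stackrel{q}{\rmap}\overline{\gg^*}$ admits a Lagrangian bisection. The existence of a bisection is the first point: the map $q\colon S\to\gg^*$ is a fiber bundle whose fiber is a source-fiber of $\Sigma(\gg^*)$, hence diffeomorphic to the compact $1$-connected group $G$; since the base $\gg^*$ is contractible the bundle is trivial and admits a section, which one then upgrades to a bisection using the freedom to move along the $q$-fibers (the $G$-torsors of the principal left action). Once a bisection exists, Proposition~\ref{prop:canonical:form} identifies $S$ with some $\Sigma(\gg^*)_{(\phi,B)}$; since $H^2(\gg^*)=0$ I would write $(\phi^{-1})^*B=\d\al$ and—crucially using source-compactness in place of compactness of the base, so that the flow defining $\exp(-\al)$ is complete—form the Lagrangian bisection $\exp(-\al)$ with $\exp(-\al)^*\Omega=-\d\al$ by Proposition~\ref{prop:bisections:form}. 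Exactly as in the proof of Corollary~\ref{cor:Picard:Lagrangian:bisections}, this shows $[S]$ is represented by some $\Sigma(\gg^*)_{(\psi,0)}$, which carries the identity (Lagrangian) bisection; the exact sequence \eqref{eq:Diff:exact:seq} then yields $\Pic(\gg^*)\simeq\OutAut(\gg^*)$.

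For the second isomorphism I would first note that, since $\gg$ is semi-simple, the origin is the unique zero-dimensional symplectic leaf (a fixed point of the coadjoint action is an $\Ad^*$-invariant functional, hence kills $[\gg,\gg]=\gg$, so is $0$); therefore every Poisson diffeomorphism $\phi$ satisfies $\phi(0)=0$ and has a well-defined linearization $\phi_0:=\d_0\phi$, which is a linear Poisson diffeomorphism and hence of the form $A^*$ for some $A\in\Aut(\gg)$. The Moser-type trick is to connect $\phi$ to $\phi_0$ through Poisson diffeomorphisms by rescaling: writing $\mu_t(x)=tx$, one has $(\mu_t)_*\pi=t\,\pi$ for the linear structure, so $\phi_t:=\mu_{1/t}\circ\phi\circ\mu_t$ satisfies $(\phi_t)_*\pi=\pi$ for all $t\in(0,1]$ and extends smoothly to $\phi_0$ at $t=0$. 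The time-dependent generator $X_t$ of this isotopy is a Poisson vector field, and since $H^1_\pi(\gg^*)=0$ it is Hamiltonian; integrating it (again using source-compactness so the resulting Lagrangian bisections are defined) shows $\phi\circ\phi_0^{-1}\in\InnAut(\gg^*)$, whence $[\phi]=[\phi_0]$ in $\OutAut(\gg^*)$. Thus every class is represented by a linear $A^*$, giving a surjection $\Aut(\gg)\to\OutAut(\gg^*)$.

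Finally I would identify its kernel. Inner Poisson automorphisms preserve each symplectic leaf: for a Lagrangian bisection $b$, the point $b(m)$ is an arrow from $m$ to $(\t\circ b)(m)$, so $m$ and its image lie on the same coadjoint orbit. Hence if $A^*$ is inner it preserves every coadjoint orbit, so it induces the identity on the orbit space $\gg^*/G\simeq\frakt^*/W$; a diagram (outer) automorphism acts nontrivially there, forcing $A\in\mathrm{Inn}(\gg)$, while conversely $\Ad_g$ produces the coadjoint action $\Ad^*_{g^{-1}}$, which is inner. Therefore the intersection of the linear automorphisms with $\InnAut(\gg^*)$ is exactly $\{A^*:A\in\mathrm{Inn}(\gg)\}$, and $\OutAut(\gg^*)\simeq\Aut(\gg)/\mathrm{Inn}(\gg)=\OutAut(\gg)$. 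The main obstacle I anticipate is the very first point of the first step—producing an honest \emph{bisection} (not merely a section of $q$) for an arbitrary bimodule over the non-compact base $\gg^*$—together with the careful verification that source-compactness of $T^*G$ genuinely substitutes for compactness of $M$ both in the $\exp$-argument and in the integration of the Moser flow.
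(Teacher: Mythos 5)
The genuine gap is exactly the point you flag at the end, and it is not a removable technicality: the existence of a bisection on \emph{every} self Morita bimodule $\gg^*\stackrel{p}{\lmap}(S,\omega)\stackrel{q}{\rmap}\overline{\gg^*}$ (the paper's Proposition~\ref{prop:compact:type:1}). Triviality of the fiber bundle $q:S\to\gg^*$ over a contractible base does give a section $b$ of $q$, but a bisection additionally requires $p\circ b$ to be a diffeomorphism of $\gg^*$, and ``moving along the $q$-fibers'' cannot obviously arrange this: the $q$-fibers are the orbits of the left Hamiltonian $G$-action whose moment map is $p$, so replacing $b(m)$ by $g(m)\cdot b(m)$ moves $p(b(m))$ only inside its coadjoint orbit. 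What you would have to solve is a realization problem --- find a smooth gauge $g:\gg^*\to G$ making $m\mapsto g(m)\cdot p(b(m))$ a diffeomorphism inducing the leaf-space homeomorphism determined by $S$ --- and that is essentially equivalent to the statement being proved, since by Proposition~\ref{prop:canonical:form} admitting a bisection is the same as lying in the image of $\G_\pi(\gg^*)\to\Pic(\gg^*)$. The paper's proof of this step is a substantial separate argument: the $p$- and $q$-fibers are copies of the compact $1$-connected group $G$, so $H^1(G)=H^2(G)=0$ forces $H^2(S)=0$ and $\omega=\d\al$, with $\al$ chosen to vanish along the Lagrangian $p^{-1}(0)=q^{-1}(0)\cong G$ and averaged over the two commuting compact group actions; the Liouville field $Y$ with $i_Y\omega=\al$ is then corrected (using $H^1_\pi(\gg^*)=0$) to a complete field $\tilde{E}$ with $\Lie_{\tilde{E}}\omega=\omega$ projecting under both $p$ and $q$ to the Euler vector field of $\gg^*$; finally $\tilde{E}$ is shown to be the Euler vector field of a vector bundle structure $\tau:S\to G$, a fiber of which is the desired bisection. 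Without an argument of this kind, your first isomorphism $\Pic(\gg^*)\simeq\OutAut(\gg^*)$ is unproven.

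The rest of your proposal is sound. Granting the bisection step, your passage to $\Pic(\gg^*)\simeq\OutAut(\gg^*)$ (via Proposition~\ref{prop:canonical:form}, $H^2(\gg^*)=0$, and $\exp(-\al)$, with source-compactness of $T^*G$ legitimately replacing compactness of $M$, since right-invariant vector fields are tangent to the compact source fibers) is how the paper proceeds. Your proof of $\OutAut(\gg^*)\simeq\OutAut(\gg)$ also matches the paper's core (the unique zero-dimensional leaf forces $\phi(0)=0$; the scaling isotopy $\phi_t=\mu_{1/t}\circ\phi\circ\mu_t$ together with $H^1_\pi(\gg^*)=0$ reduces to the linearization), while your kernel computation is a valid variant: you use that inner automorphisms preserve coadjoint orbits and that nontrivial diagram automorphisms act nontrivially on $\frakt^*/W$, whereas the paper uses surjectivity of $\exp:\gg\to G$ to reduce to $\phi_*=\mathrm{id}$ before applying the Moser trick; the two are interchangeable here.
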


The proof of this theorem is a direct consequence of the next two
propositions.

%\begin{proof}
%The first important fact that enables the computation of $\Pic(\gg^*)$ is the following:

\begin{prop}
\label{prop:compact:type:1} If $\gg$ is a compact semi-simple Lie algebra then every bimodule $(S,\omega)\tto \gg^*$ has a bisection.
\end{prop}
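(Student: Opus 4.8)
The plan is to use that, for $\gg$ compact and semi-simple, the $1$-connected integration $G$ is compact, so that the symplectic groupoid is the proper groupoid $\Sigma(\gg^*)=T^*G\tto\gg^*$ whose source fibres are all diffeomorphic to $G$. First I would translate the bimodule into group actions. By Theorem~\ref{thm:MW88} a self Morita bimodule $\gg^*\stackrel{p}{\lmap}(S,\omega)\stackrel{q}{\rmap}\overline{\gg^*}$ carries commuting left and right symplectic actions of $\Sigma(\gg^*)=T^*G$; since for the cotangent groupoid $T^*G\tto\gg^*$ such actions are precisely Hamiltonian $G$-actions with moment map to $\gg^*$, the bimodule becomes a pair of commuting Hamiltonian $G$-actions with moment maps $p$ and $q$. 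Principality of the groupoid actions forces each $G$-action to be free and proper, with the fibres of $p$ being the orbits of the right action, the fibres of $q$ the orbits of the left action, and $p$ (resp. $q$) equivariant for the coadjoint action on its own side and invariant under the other. In particular $p,q\colon S\to\gg^*$ are proper surjective submersions, hence locally trivial bundles with fibre $G$ (Ehresmann); as $\gg^*$ is contractible, the principal left bundle $q\colon S\to\gg^*$ admits a global smooth section.

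Next I would reduce the statement to a gauge problem. A bisection of $S$ is exactly a section $b$ of $q$ for which $\phi:=p\circ b$ is a diffeomorphism of $\gg^*$ (equivalently, by Proposition~\ref{prop:canonical:form}, an isomorphism $S\simeq\Sigma(\gg^*)_{(\phi,B)}$). Starting from an arbitrary section $b_0$ of $q$, any other section has the form $b(\mu)=g(\mu)\cdot b_0(\mu)$ for a smooth gauge $g\colon\gg^*\to G$, and by equivariance of $p$ this changes the composite to $\mu\mapsto\Ad^*_{g(\mu)}\,(p\circ b_0)(\mu)$. Moreover, since the $q$-fibre over $\mu$ is a single left orbit which $p$ maps onto a coadjoint orbit, the map $p\circ b_0$ necessarily covers the leaf-space homeomorphism $\sigma$ of $\gg^*/G$ induced by the Morita equivalence, i.e. $(p\circ b_0)(\mu)$ lies in the coadjoint orbit $\sigma([\mu])$. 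Thus everything reduces to choosing the gauge $g$ so that the corrected map $\phi$, which automatically covers $\sigma$, becomes a global diffeomorphism of $\gg^*$.

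The main step is therefore to realise the leaf-space correspondence $\sigma$ by a genuine diffeomorphism of $\gg^*$ compatible with the orbit structure, and this is where both hypotheses enter. Compactness of $G$ supplies the tools to handle the singular coadjoint action: the slice theorem gives equivariant tubular neighbourhoods of orbits, averaging over $G$ produces equivariant objects, and properness makes $\gg^*/G$ Hausdorff. Semi-simplicity pins down its structure: the origin is the unique coadjoint fixed point, since $(\gg^*)^G=\{0\}$ because $\gg=[\gg,\gg]$; the orbit space is a closed Weyl chamber $\frakt^*_+$, a cone with apex $0$; stabilisers are connected; and $\pi_1(G)=0$. Using these, I would build the corrected section stratum by stratum, anchoring it over the apex (where $p^{-1}(0)=q^{-1}(0)$ is a single $G$-bitorsor) and propagating outward along the cone, choosing the gauge $g$ smoothly across the changes of orbit type.

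I expect the smoothness across the singular strata to be the main obstacle. The naive candidate of a $G$-equivariant section breaks down exactly at the apex, where the residual right $G$-action is free and hence admits no fixed point, so the correction $g$ cannot be taken equivariant there; reconciling the equivariant construction away from $0$ with a smooth choice over $0$ is the delicate point. This is precisely where compactness (through the slice theorem and averaging) and semi-simplicity (through the cone structure of $\frakt^*_+$ and the connectedness of stabilisers, which control the relevant realisation and twisting data) are indispensable. Once a smooth section $b$ of $q$ with $p\circ b$ a diffeomorphism has been produced, it is by definition a bisection of $S$, completing the proof.
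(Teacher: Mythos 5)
Your setup is correct and matches the paper's starting point: the canonical integration is the action groupoid $T^*G\simeq G\times\gg^*\tto\gg^*$ of the coadjoint action of the compact $1$-connected group $G$, a bimodule is a pair of commuting free Hamiltonian $G$-actions with equivariant moment maps $p,q$, the principal bundle $q:S\to\gg^*$ is trivial over the contractible base, and a bisection is a section $b$ of $q$ with $p\circ b$ a diffeomorphism, so that everything reduces to choosing a gauge $g:\gg^*\to G$ making $\mu\mapsto\Ad^*_{g(\mu)}\bigl((p\circ b_0)(\mu)\bigr)$ a global diffeomorphism. But this is where your proof stops being a proof: the reduction is essentially a restatement of the proposition (the existence of such a gauge \emph{is} the existence of a bisection), and the construction of $g$ is never carried out. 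You describe a stratum-by-stratum plan anchored at the apex of the Weyl chamber and you yourself flag that the reconciliation of the equivariant construction away from $0$ with a smooth choice across the origin --- where the residual action is free and no equivariant choice exists --- is unresolved. Naming the obstacle is not the same as overcoming it; no mechanism is given that actually produces the gauge, nor even a proof that the leaf-space homeomorphism $\sigma$ is realized by \emph{some} orbit-compatible diffeomorphism of $\gg^*$, which is itself a nontrivial statement.

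For contrast, the paper avoids any stratified construction by a global cohomological argument. Since $H^1(G)=H^2(G)=0$, the form $\omega$ is exact, and one can pick a primitive $\al$ vanishing along the Lagrangian central fiber $p^{-1}(0)=q^{-1}(0)\simeq G$ and average it to be invariant under both actions. The dual Liouville vector field $Y$ ($i_Y\omega=\al$, $\Lie_Y\omega=\omega$) then projects under $p$ and $q$ to vector fields $Y_i$ with $\Lie_{Y_i}\pi=\pi$; using $H^1_\pi(\gg^*)=0$ one corrects $Y$ by Hamiltonian vector fields $X_{h_1\circ p}+X_{h_2\circ q}$ to obtain $\tilde E$ projecting to the Euler vector field $E$ under \emph{both} moment maps. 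The flow of $\tilde E$ exhibits $S$ as a vector bundle over the central fiber $G$, and any fiber of that bundle is the desired bisection. The compactness and semi-simplicity hypotheses enter precisely through $H^1(G)=H^2(G)=0$, the averaging, and $H^1_\pi(\gg^*)=0$ --- tools your outline does not invoke and which replace the delicate cross-strata matching you were unable to perform.
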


We defer the proof of this proposition to the end of the section.
Since $\gg^*$ is a vector space, this lemma allows us to invoke
Corollary \ref{cor:Picard:bisections} to conclude that
\[
\Pic(\gg^*)\simeq \OutAut(\gg^*).
\]
We now conclude the proof of Theorem \ref{thm:compact:semisimple}
with

\begin{prop}
\label{prop:compact:type:2} If $\gg$ is a compact semi-simple Lie
algebra then
\[
\OutAut(\gg^*)\simeq \OutAut(\gg).
\]
\end{prop}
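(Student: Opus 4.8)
The plan is to prove the proposition by showing that linearization at the origin identifies the two outer-automorphism groups. Since $\gg$ is semisimple we have $[\gg,\gg]=\gg$, so the origin is the unique point of $\gg^*$ at which $\pi$ vanishes; hence every Poisson diffeomorphism of $\gg^*$ fixes $0$. Moreover a linear isomorphism of $\gg^*$ is a Poisson diffeomorphism precisely when its transpose is a Lie algebra automorphism of $\gg$, so assigning to $A\in\Aut(\gg)$ the induced linear Poisson diffeomorphism $A^*$ of $\gg^*$ defines a homomorphism $\Phi\colon\Aut(\gg)\to\OutAut(\gg^*)$, $A\mapsto[A^*]$. I would prove that $\Phi$ is surjective with kernel exactly $\mathrm{Inn}(\gg)$, so that it descends to the asserted isomorphism $\OutAut(\gg)\cong\OutAut(\gg^*)$. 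Throughout I may use that $H^1_\pi(\gg^*)=0$ (established above), as well as $H^1(\gg^*)=0$ since $\gg^*$ is contractible.

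For surjectivity — the Moser step — take any Poisson diffeomorphism $\psi$ with $\psi(0)=0$ and consider the rescaled family $\psi_t:=\tfrac1t\,\psi(t\,\cdot)$ for $t\in(0,1]$, which extends smoothly to $t=0$ with $\psi_0=D_0\psi=:A^*$, a linear Poisson diffeomorphism. Because the Lie--Poisson bracket is homogeneous of degree one — concretely $\{h_1\circ m_t,h_2\circ m_t\}=t\,(\{h_1,h_2\}\circ m_t)$ for the scaling $m_t(\mu)=t\mu$ — a short check on linear functions shows that each $\psi_t$ is again Poisson. Thus $\chi_t:=\psi_t\circ\psi_0^{-1}$ is a Poisson isotopy with $\chi_0=\id$ and $\chi_1=\psi\circ(A^*)^{-1}$, generated by a time-dependent Poisson vector field $Z_t$. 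Since $H^1_\pi(\gg^*)=0$ and $H^1(\gg^*)=0$, the class of $Z_t$ vanishes and $Z_t=\pi^\sharp(\d h_t)$ is exact Hamiltonian. The flow then lifts to a path $b_t\in\LBis(\Sigma(\gg^*))$ with $\t\circ b_t=\chi_t$; here the compactness of the $1$-connected group $G$ (so that $\Sigma(\gg^*)=T^*G$ has compact source-fibers) guarantees completeness of the lift. Hence $\psi\circ(A^*)^{-1}=\t\circ b_1\in\InnAut(\gg^*)$, i.e.\ $[\psi]=[A^*]=\Phi(A)$. I expect this linearization-and-integration step to be the main obstacle, since it is where the compact--semisimple hypothesis is genuinely used, both through $H^1_\pi(\gg^*)=0$ and through the source-compactness of $T^*G$.

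For the kernel, the inclusion $\mathrm{Inn}(\gg)\subset\ker\Phi$ is immediate: $\Ad_g$ induces the coadjoint action $\Ad^*$ on $\gg^*$, which is the Hamiltonian flow of a linear function and hence inner. Conversely, suppose $A^*\in\InnAut(\gg^*)$, say $A^*=\t\circ b$ for a Lagrangian bisection $b$. For each $m$ the arrow $b(m)$ joins $m=\s(b(m))$ to $A^*(m)=\t(b(m))$, so $A^*$ preserves every orbit of $\Sigma(\gg^*)$, that is, every symplectic leaf; these are the coadjoint orbits. Transporting through the $\Ad$-equivariant Killing isomorphism $\gg\cong\gg^*$, this means $A$ preserves every adjoint orbit. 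The remaining point is the Lie-theoretic lemma that an automorphism of a compact semisimple $\gg$ preserving all adjoint orbits is inner: after composing with an inner automorphism one may assume $A$ preserves a fixed Cartan subalgebra $\mathfrak t$ (conjugacy of maximal tori); then $A|_{\mathfrak t}$ sends each element into its own Weyl orbit, which by linearity forces $A|_{\mathfrak t}$ to equal a single element of the Weyl group, realized by an inner automorphism; the leftover automorphism fixes $\mathfrak t$ pointwise and so induces the trivial Dynkin diagram automorphism, hence is inner. This gives $\ker\Phi=\mathrm{Inn}(\gg)$, and together with surjectivity we conclude $\OutAut(\gg^*)\simeq\Aut(\gg)/\mathrm{Inn}(\gg)=\OutAut(\gg)$.
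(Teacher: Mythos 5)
Your proof is correct, and its analytic core is the same as the paper's: both rest on the rescaling isotopy $\psi_t=\tfrac1t\psi(t\,\cdot)$, the vanishing $H^1_\pi(\gg^*)=0$ to write the generating Poisson vector fields as $\pi^\sharp(\d h_t)$, and the integration of this Hamiltonian family to a path of Lagrangian bisections of $\Sigma(\gg^*)=T^*G$ (with compactness of $G$ ensuring completeness). The differences are in the packaging and, more substantially, in the algebraic step. You build the map in the direction $\Aut(\gg)\to\OutAut(\gg^*)$ and compute its kernel, whereas the paper linearizes at the origin, $\phi\mapsto(\d_0\phi^{-1})^*$, which lets it dispose of the step ``inner on $\gg^*$ implies inner on $\gg$'' in one line: if $\phi=\t\circ b$ for a Lagrangian bisection $b$ of $T^*G\tto\gg^*$, then $\phi_*=\Ad_{b(0)}$, since $b(0)$ lies in the isotropy group $G$ over the origin. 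Your route to the same conclusion -- inner Poisson automorphisms preserve every coadjoint orbit, and an automorphism of a compact semisimple Lie algebra preserving every adjoint orbit is inner -- is valid but invokes considerably more Lie theory (conjugacy of Cartan subalgebras, the fact that a real vector space is not a finite union of proper subspaces, realization of Weyl group elements by inner automorphisms, and finally the theorem that $\ker\bigl(\Aut(\gg)\to\Aut(\text{Dynkin})\bigr)=\mathrm{Inn}(\gg)$; note that your closing phrase ``induces the trivial Dynkin diagram automorphism, hence is inner'' is exactly this last theorem, so it should be cited rather than treated as obvious). On the other hand, your arrangement of the Moser step is slightly cleaner than the paper's: since the isotopy connects $\psi$ directly to its linearization $D_0\psi$, you avoid the paper's preliminary reduction to $\phi_*=\id$ via surjectivity of $\exp:\gg\to G$ (which you then only need for $\mathrm{Inn}(\gg)\subset\ker\Phi$, where connectedness of $G$ already suffices). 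One small point at the level of the paper's own rigor: you should note that the primitives $h_t$ can be chosen to depend smoothly on $t$ (the paper cites \cite{GW92} for this), as this is needed to define the time-dependent flow and its lift to bisections.
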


\begin{proof}%[Proof of Proposition \ref{prop:compact:type:2}]
Let $\phi:\gg^*\to\gg^*$ be a Poisson diffeomorphism. Since $\phi$
maps symplectic leaves to symplectic leaves and, by semi-simplicity,
$\{0\}$ is the only zero dimensional leaf, we must have $\phi(0)=0$.
Since the Poisson structure on $\gg^*$ is already linear, we
conclude that $\d_0\phi:\gg^*\to\gg^*$ is a linear Poisson
isomorphism, which is equivalent to the inverse transpose
$(\d_0\phi^{-1})^*:\gg\to\gg$ being a Lie algebra automorphism. In
this way, we obtain a group homomorphism
\[
\Aut(\gg^*)\to \Aut(\gg),\ \phi\longmapsto
\phi_*\equiv(\d_0\phi^{-1})^*.
\]
If $b\in\LBis(\Sigma(\gg^*))=\LBis(T^*G)$ is a lagrangian bisection
inducing a inner automorphism $\phi$, then we find that the induced
automorphism of $\gg$ is inner:
\[ \phi_*(v)=\Ad g\cdot v, \]
where $g=b(0)$. It follows that we have  a well-defined group
homomorphism
\[
\OutAut(\gg^*)\to \OutAut(\gg),\ [\phi]\longmapsto [\phi_*].
\]
We claim that this is a group isomorphism:
\begin{enumerate}[(i)]
\item \emph{Surjectivity}:
Given $[l]\in \OutAut(\gg)$, with $l:\gg\to\gg$ a Lie algebra
automorphism, the map $\phi=(l^{-1})^*:\gg^*\to\gg^*$ is a Poisson
automorphism and $[\phi_*]=l$.

\item \emph{Injectivity}:
Assume that $\phi:\gg^*\to\gg^*$ is a Poisson diffeomorphism such
that $\phi_*$ is a inner automorphism. We need to show that $\phi$
is inner.

We know that $(\d_0\phi^{-1})^*=\Ad g$, for some $g\in G$. Since for
a compact Lie group $\exp:\gg\to G$ is a surjective map, there
exists $v\in\gg$ such that $\exp(v)=g$. If we let $f_v(\xi)=\langle
\xi,v\rangle$, then the Lagrangian bisection $\exp(\d f_v)$ defines
a inner automorphism $\psi:\gg^*\to\gg^*$ such that $\psi_*=\phi_*$.
Hence, after composing $\phi$ with $\psi^{-1}$, we can assume that
$\phi_*=$id. We will use  a Moser-type trick to show that $\phi$ is
inner.

Let $\phi_t:\gg^*\to\gg^*$ be the Poisson isotopy from $\phi$ to the
identity given by
\[
\phi_t(x)=\begin{cases} \frac{1}{t}\phi(tx), &\mbox{if } t\not=0, \\
x, & \mbox{if } t=0. \end{cases}
\]
Since each $\phi_t$ preserves $\pi$, the corresponding vector
field $X_t(x)\equiv \frac{\d}{\d t}\phi_t(x)$ is a time-dependent
Poisson vector field:
\[
\Lie_{X_t}\pi=0.
\]
The condition $H^1_\pi(\gg^*)=0$ implies that, for each $t$, there
is a function $f_t\in C^\infty(\gg^*)$ such that $X_t=\pi^\sharp \d
f_t$. Moreover, we can assume that the family $f_t$ depends smoothly
on the parameter $t$ (see \cite[pp. 449]{GW92}, Remark 1). Then
$b_t=\exp(\d f_t)$ defines a 1-parameter family of Lagrangian
bisections such that the corresponding inner Poisson automorphisms
coincide with $\phi_t$. In particular, $\phi=\phi_1$ is inner.
\end{enumerate}
\end{proof}

To finish the proof of Theorem \ref{thm:compact:semisimple}, we
present the proof of Proposition \ref{prop:compact:type:1}:

\begin{proof}[Proof of Proposition \ref{prop:compact:type:1}]
Given a  bimodule $(S,\omega)\tto \gg^*$, we must show that it has a
bisection. Note that the canonical integration of $\gg^*$ is the
action groupoid $G\times \gg^*\tto\gg^*$ associated with the
coadjoint action of the compact, 1-connected, Lie group $G$
integrating $\gg$. Therefore, a bimodule amounts to two commuting
Hamiltonian free actions of $G$ on $S$ with moment maps
$p,q:S\to\gg^*$, such that each moment map is the quotient map to
the orbit space of the other action.

Observe that the fibers of $p$ and $q$ are diffeomorphic to $G$.
Since $H^1(G)=0$ and $H^2(G)=0$, the same holds for $q$- and
$p$-fibers. It follows that $H^2(S)=0$, so $\omega$ is exact.
Noticing that $p^{-1}(0)=q^{-1}(0)$ is Lagrangian (and has vanishing
first cohomology), we see that we can find a primitive $\alpha$ for
$\omega$, $\omega=\d\al$, such that $\al|_x=0$ for $x\in
p^{-1}(0)=q^{-1}(0)$. By averaging over the two commuting actions,
we can choose additionally $\al$ to be $G$-invariant.

It follows that the unique vector field $Y\in\X(S)$ satisfying
\[
i_Y\omega=\al,
\]
is $G$-invariant under both actions, and $Y|_x=0$ if $x\in
p^{-1}(0)=q^{-1}(0)$. Note that
\[
\Lie_Y\omega=\omega.
\]
Let $Y_1=p_*Y$ and $Y_2=q_*Y$ be the projections of $Y$ on $\gg^*$.
Since $p$ and $q$ are Poisson/anti-Poisson maps, it follows that
\[
\Lie_{Y_i}\pi=\pi, \;\;\; (i=1,2).
\]
Since $\pi$ is a linear Poisson structure, the Euler vector field
$E=\sum_{i=1}^d \xi_i\frac{\partial}{\partial \xi_i}\in\X(\gg^*)$
also satisfies $\Lie_E\pi=\pi$.
Hence $Y_i-E$, $i=1,2$, are Poisson vector fields. The condition
$H^1_\pi(\gg^*)=0$ implies that there exist smooth functions $h_i\in
C^\infty(\gg^*)$ such that
\[
Y_i=E+\pi^\sharp(\d h_i).
\]
By setting $\tilde{E}:=Y-X_{h_1\circ p}-X_{h_2\circ q}\in\X(S)$, we
obtain a vector field in $S$ satisfying
\[
\Lie_{\tilde{E}}\omega=\omega,\quad p_*(\tilde{E})=q_*(\tilde{E})=E.
\]
Note that $\tilde{E}$ vanishes only at points $x\in
p^{-1}(0)=q^{-1}(0)$ because it projects on $E$, which vanishes only
at the origin. Moreover, $\tilde{E}$ is complete because $E$ is a
complete vector field and the fibers of $p$ (or $q$) are compact. In
a neighborhood of any $x\in p^{-1}(0)=q^{-1}(0)$ we can split
$\tilde{E}$ as a sum of $E$ and a vector field along the compact
fibers of $p$ which vanishes at $p^{-1}(0)$. We conclude that for
any $x\in S$ $\lim_{t\to -\infty}\phi^t_{\tilde{E}}(x)$ exists and
belongs to $G=p^{-1}(0)=q^{-1}(0)$.

Since $\tau:S\to G$, $x\mapsto \lim_{t\to
-\infty}\phi^t_{\tilde{E}}(x)$, is a projection and the
linearization of $\tilde{E}$ at $x\in G$ is also a linear
projection, it follows (see, e.g., \cite{GR09}) that
$S$ is a vector bundle over $G=p^{-1}(0)=q^{-1}(0)$ with projection
$\tau:S\to G$ and Euler vector field $\tilde{E}$. A fiber of $\tau$
gives the desired bisection of $S\tto \gg^*$.
\end{proof}

%%%%%%%%%%%%%%%%%%%%%%%%%%%%%%%%%%%%%%%%%%%%%%%%%%%%%%%%%%%%%%%%%%%%%%%%%%%%
\subsection{Strong-proper Poisson structures}
%%%%%%%%%%%%%%%%%%%%%%%%%%%%%%%%%%%%%%%%%%%%%%%%%%%%%%%%%%%%%%%%%%%%%%%%%%%%

A Poisson structure $(M,\pi)$ is called \textbf{strong-proper} if
the symplectic groupoid $\Sigma(M)$ is a proper Hausdorff Lie
groupoid. Examples include any symplectic manifold $S$ with
finite fundamental group or the dual $\gg^*$ of a compact
semi-simple Lie algebra. This class of Poisson manifolds plays the
role of the ``compact objects'' in Poisson geometry and is studied in
detail in \cite{CrFeMa15}. Our methods yield the following result:

\begin{prop}
If $(M,\pi)$ is a strong-proper Poisson manifold then $\pic(M)$ is
a subalgebra of the abelian Lie algebra $H^2(M)$. In particular, if $H^2(M)$ is
finite dimensional, then $\Pic(M)$ is a finite dimensional Lie group with
abelian Lie algebra.
\end{prop}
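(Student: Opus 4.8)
The plan is to extract everything from the long exact sequence of Corollary~\ref{cor:relative:exact:sequence},
\[
\xymatrix{ H^1(M)\ar[r]^{\pi^\sharp} & H^1_\pi(M)\ar[r] &\pic(M)\ar[r]^{r} & H^2(M)\ar[r]^{\pi^\sharp} & H^2_\pi(M),}
\]
where the projection $r$ is induced by $[(Z,\beta)]\mapsto[\beta]$. Exactness shows that $r$ is injective exactly when $\pi^\sharp\colon H^1(M)\to H^1_\pi(M)$ is surjective, and that in that case $r$ identifies $\pic(M)$ with $\ker\big(\pi^\sharp\colon H^2(M)\to H^2_\pi(M)\big)$, a linear subspace of $H^2(M)$. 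Thus the proposition reduces to two points: (i) surjectivity of $\pi^\sharp\colon H^1(M)\to H^1_\pi(M)$, and (ii) the statement that $r$ is a homomorphism of Lie algebras when $H^2(M)$ is given the trivial (abelian) bracket.

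For (i) I would prove the stronger fact $H^1_\pi(M)=0$, by the same reasoning used above for $\pic(\gg^*)$ with $\gg$ compact semisimple. Since $\Sigma(M)$ is source-simply-connected (hence source-$1$-connected), the Van~Est map of \cite{Cr03} is an isomorphism in degree one and identifies the first differentiable groupoid cohomology of $\Sigma(M)$ with the degree-one cohomology of its Lie algebroid $T^*M$, that is, with $H^1_\pi(M)$. Because $M$ is strong-proper, $\Sigma(M)$ is a proper Hausdorff Lie groupoid, and the vanishing of differentiable cohomology for proper groupoids \cite{Cr03} gives that this first differentiable cohomology is zero. Hence $H^1_\pi(M)=0$, and (i) holds trivially. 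This is the crux of the argument; the only inputs beyond the exact sequence are that source-$1$-connectedness puts us in the range where Van~Est is an isomorphism in degree one, and that properness forces the differentiable cohomology to vanish.

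For (ii), recall that the bracket on $\pic(M)$ is induced from that of $\frakg_\pi(M)$, so for classes represented by $(Z_i,\beta_i)$ one has $r\big([(Z_1,\beta_1),(Z_2,\beta_2)]\big)=[\Lie_{Z_1}\beta_2-\Lie_{Z_2}\beta_1]$. As each $\beta_i$ is closed, Cartan's formula yields $\Lie_{Z_1}\beta_2-\Lie_{Z_2}\beta_1=\d(i_{Z_1}\beta_2-i_{Z_2}\beta_1)$, which is exact and hence vanishes in $H^2(M)$. Therefore $r$ is a Lie algebra homomorphism into the abelian Lie algebra $H^2(M)$; being injective by (i), it forces $\pic(M)$ itself to be abelian and embeds it as an abelian Lie subalgebra $\ker\big(\pi^\sharp\colon H^2(M)\to H^2_\pi(M)\big)$ of $H^2(M)$, which is the first assertion.

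For the final statement, if $H^2(M)$ is finite-dimensional then so is $\pic(M)$. The connected component $\Pic(M)^0$ integrates the finite-dimensional abelian Lie algebra $\pic(M)$ and is therefore a finite-dimensional abelian Lie group. By Proposition~\ref{prop:topology:bisections}, $\Pic(M)^0$ sits inside the open subgroup formed by bimodules admitting a bisection, which is a topological group; this endows $\Pic(M)$ with the structure of a finite-dimensional Lie group whose Lie algebra $\pic(M)$ is abelian, completing the proof.
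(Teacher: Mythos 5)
Your proof is correct and follows essentially the same route as the paper: the crux in both is that properness of $\Sigma(M)$ plus the degree-one Van Est isomorphism (valid since $\Sigma(M)$ is source-$1$-connected) forces $H^1_\pi(M)=0$, whence the long exact sequence of Corollary~\ref{cor:relative:exact:sequence} embeds $\pic(M)$ into $H^2(M)$. Your explicit check that the map $[(Z,\beta)]\mapsto[\beta]$ kills brackets (via $\Lie_{Z_1}\beta_2-\Lie_{Z_2}\beta_1=\d(i_{Z_1}\beta_2-i_{Z_2}\beta_1)$), so that $\pic(M)$ is genuinely abelian, is a worthwhile detail the paper leaves implicit.
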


\begin{proof}
The differentiable groupoid cohomology of a proper Lie groupoid
vanishes, and the Van Est map relating differentiable groupoid
cohomology and Lie algebroid cohomology is an isomorphism in degree
1 (\cite{Cr03}). Hence, in our case, we have
$H^1_\pi(M)=H^1_{\d}(\Sigma(M))=\{0\}$. Therefore in the long exact
sequence of Corollary \ref{cor:relative:exact:sequence} we obtain an
injective homomorphism $\pic(M)\hookrightarrow H^2(M)$.
\end{proof}

Although in all the examples that we have seen of strong-proper
Poisson manifolds we obtained $\pic(M)=0$, the regular
(non-symplectic) examples in \cite{CrFeMa15} have $\pic(M)\not=0$.

%%%%%%%%%%%%%%%%%%%%%%%%%%%%%%%%%%%%%%%%%%%%%%%%%%%%%%%%%%%%%%%%%%%%%%%%%%%%%%%%%%%%%%

\end{document}